
\documentclass[11pt]{amsart}
\usepackage[parfill]{parskip}    
\usepackage{graphicx, txfonts}
\DeclareGraphicsRule{.tif}{png}{.png}{`convert #1 `dirname #1`/`basename #1 .tif`.png}
\usepackage{epstopdf}
\usepackage{hyperref}
\usepackage{amsthm, amsmath, amssymb, amsfonts, amsmath, enumerate, xy, mathrsfs}
\input xy
\xyoption{all}
\pagestyle{headings}

\newtheorem{theorem}{Theorem}[section]
\newtheorem{proposition}[theorem]{Proposition}
\newtheorem{prop}[theorem]{Proposition}
\newtheorem{lemma}[theorem]{Lemma}
\newtheorem{corollary}[theorem]{Corollary}

\theoremstyle{definition}

\newtheorem{defn}[theorem]{Definition}
\newtheorem{conjecture}[theorem]{Conjecture}

\newtheorem{example}[theorem]{Example}

\theoremstyle{remark}
\newtheorem{remark}[theorem]{Remark}

\renewcommand{\P}{\mathcal{P}}

\newcommand{\N}{\mathbb{N}}
\newcommand{\Q}{\mathbb{Q}}

\newcommand{\F}{\mathbb{F}}



\newcommand{\M}{\mathcal{M}}
\newcommand{\C}{\mathcal{C}}

\newcommand{\D}{\mathcal{D}}


\newcommand{\sP}{\mathscr{P}}

\newcommand{\sW}{\mathscr{W}}
\newcommand{\sQ}{\mathscr{Q}}

\renewcommand{\emptyset}{\varnothing}

\renewcommand{\tilde}[1]{\widetilde{#1}}

\DeclareMathOperator{\colim}{colim}

\DeclareMathOperator{\Arr}{Arr}
\DeclareMathOperator{\Sym}{Sym}
\DeclareMathOperator{\CMon}{CMon}
\DeclareMathOperator{\skel}{skel}

\DeclareMathOperator{\Mon}{Mon}

\DeclareMathOperator{\ho}{Ho}


\newcommand{\po}{\ar@{}[dr]|(.7){\Searrow}}
\newcommand{\pb}{\ar@{}[dr]|(.3){\Nwarrow}}

\newcommand{\cat}[1]{\mathcal{#1}}

\newcommand{\boxprod}{\mathbin\square}

\begin{document}
\title{Model Structures on Commutative Monoids in General Model Categories}

\author{David White}
\address{Denison University
\\ Granville, OH 43023}
\email{david.white@denison.edu}

\begin{abstract}
We provide conditions on a monoidal model category $\mathcal{M}$ so that the category of commutative monoids in $\mathcal{M}$ inherits a model structure from $\mathcal{M}$ in which a map is a weak equivalence or fibration if and only if it is so in $\mathcal{M}$. We then investigate properties of cofibrations of commutative monoids, rectification between $E_\infty$-algebras and commutative monoids, the relationship between commutative monoids and monoidal Bousfield localization functors, when the category of commutative monoids can be made left proper, and functoriality of the passage from a commutative monoid $R$ to the category of commutative $R$-algebras. In the final section we provide numerous examples of model categories satisfying our hypotheses.

\smallskip
\noindent
\textbf{Keywords}: Abstract Homotopy Theory, Monoidal Model Categories, Commutative Monoids, $E_\infty$ Ring Spectra, Bousfield Localization, Symmetric Operads
\end{abstract}

\maketitle

\section{Introduction}
In recent years, the importance of monoidal model categories has been demonstrated by a number of striking results related to structured (equivariant) ring spectra, c.f. \cite{EKMM}, \cite{hovey-shipley-smith}, \cite{kervaire-arxiv}, \cite{MMSS}, \cite{toen-vezzosi}. Commutative monoids played a key role in many of these applications, and it became important to have a model structure on objects with commutative structure, compatibly with the monoidal model structure on the underlying category $\M$.

The non-commutative case was treated in \cite{SS00}, where the authors introduced the monoid axiom. They prove that if $\M$ satisfies the monoid axiom then the category of monoids in $\M$ inherits a model structure from $\M$ with weak equivalences (resp. fibrations) maps that are weak equivalences (resp. fibrations) in $\M$. They then verify that the monoid axiom holds for all examples of interest. 

In this paper we will take a similar approach and introduce the commutative monoid axiom, which guarantees us that commutative monoids in $\M$ inherit a model structure. In \cite{SS00}, the authors refer to the commutative situation as ``intrinsically more complicated'' and indeed there are several known cases where commutative monoids cannot inherit a model structure in the way above, e.g. commutative differential graded algebras over a field of nonzero characteristic, $\Gamma$-spaces, and non-positive model structures on symmetric or orthogonal spectra (due to an example of Gaunce Lewis in \cite{lewis}). Side-stepping Lewis's example required the introduction of positive variants on diagram spectra in \cite{MMSS}, and the convenient model structure on symmetric spectra introduced in \cite{shipley-positive} (nowadays referred to as the positive flat model structure). We discuss these examples in Section \ref{sec:examples}.

One way to get around these obstacles is to work with $E_\infty$-algebras everywhere and never ask for strict commutativity. It is much easier to place a model structure on $E_\infty$-algebras because $E_\infty$ operads are $\Sigma$-cofibrant, while $Com$ is not. We feel it is important to also be able to treat the strict commutative case, because outside of categories of structured ring spectra one does not know that there is a Quillen equivalence between $E_\infty$-algebras and strictly commutative monoids (because $Com$ is not $\Sigma$-cofibrant, one cannot use the general rectification results in \cite{BM03}). The crucial hypothesis which allows such a Quillen equivalence in the case of structured ring spectra is that for all cofibrant $X$, the map $(E\Sigma_n)_+ \wedge_{\Sigma_n} X^{\wedge n} \to X^{\wedge n}/\Sigma_n$ is a weak equivalence. It is important to note that this hypothesis is not necessary for strictly commutative monoids to inherit a model structure (in particular, it fails for simplicial sets). This hypothesis appears to be more related to the rectification question than to the question of existence of model structures. We address the point further in Section \ref{subsec:rectification}.

Due to the difficulties associated with passing model structures to categories of commutative monoids, several important papers have folded the existence of a model structure on commutative monoids into their hypotheses. This is done in Assumption 1.1.0.4 in \cite{toen-vezzosi} and in Hypothesis 5.5 in \cite{shipley-uniqueness}, among other places. The results in Section \ref{sec:main} provide checkable conditions on $\M$ so that those hypotheses are satisfied.

We remark that a different axiom on $\M$ which guarantees commutative monoids inherit a model structure has appeared as Proposition 4.3.21 in \cite{DAG3}. However, it is pointed out in \cite{MO-error-Lurie} that this work contains some errors and as written does not apply to the positive model structure on symmetric spectra. Furthermore, we will demonstrate that it does not apply to topological spaces, though it does apply to chain complexes over a field of characteristic zero. Our commutative monoid axiom is more general, and does apply in these situations.

After a review of model category preliminaries in Section \ref{sec:background}, we will proceed to state the commutative monoid axiom and prove our main result in Section \ref{sec:main}, highlighting differences from the situation of \cite{DAG3} as we go. We additionally discuss when a cofibration of commutative monoids forgets to a cofibration in $\M$, and we introduce the strong commutative monoid axiom to guarantee this occurs. Following \cite{SS00}, we place the details of the proofs of these main results in Appendix \ref{appendix-proof} and we also prove in Appendix \ref{appendix-generators} that it is sufficient to check the strong commutative monoid axiom on the generating (trivial) cofibrations. Using this, we collect examples in Section \ref{sec:examples}. We include additional results regarding functoriality of the passage from $R$ to commutative $R$-algebras, regarding rectification between $Com$ and $E_\infty$, regarding the interplay between the strong commutative monoid axiom and Bousfield localization, and regarding left properness for the category of commutative monoids in Section \ref{sec:additional-results}. Finally, we conclude with a discussion in Appendix \ref{sec:operads} of what can be said for operads other than $Com$.

\section{Preliminaries}
\label{sec:background}

We assume the reader is familiar with basic facts about model categories. Excellent introductions to the subject can be found in \cite{dwyer-spalinski}, \cite{hovey-book}, or \cite{hirschhorn}. Throughout the paper we will assume $\M$ is a cofibrantly generated model category, i.e. there is a set $I$ of cofibrations and a set $J$ of trivial cofibrations which permit the small object argument (with respect to some cardinal $\kappa$), and a map is a (trivial) fibration if and only if it satisfies the right lifting property with respect to all maps in $J$ (resp. $I$). 

A morphism $f$ is a relative $I$-cell complex if $f$ is a transfinite composition of pushouts of elements of $I$, i.e. a transfinite composition of morphisms $f_\alpha:X_\alpha \to X_{\alpha+1}$ where each $f_\alpha$ is obtained as the pushout of a map $g_\alpha:C_\alpha \to D_\alpha$ in $I$ along a map $C_\alpha \to X_\alpha$. Let $I$-cell denote the class of relative $I$-cell complexes, let $I$-cof denote the class of morphisms that are retracts of relative $I$-cell complexes, and let $I$-inj denote the class of morphisms that have the right lifting property with respect to $I$. 

In order to run the small object argument, we will assume the domains $K$ of the maps in $I$ (and $J$) are $\kappa$-small relative to $I$-cell (resp. $J$-cell), i.e. given a regular cardinal $\lambda \geq \kappa$, any $\lambda$-sequence $X_0\to X_1\to \dots$ formed of maps $X_\beta \to X_{\beta+1}$ in $I$-cell, then the map of sets $\colim_{\beta < \lambda} \M(K,X_\beta) \to \M(K,\colim_{\beta < \lambda} X_\beta)$ is a bijection. An object is small if there is some $\kappa$ for which it is $\kappa$-small. See Chapter 10 of \cite{hirschhorn} for a more thorough treatment of this material. For any object $X$ we have a cofibrant replacement $QX \to X$ and a fibrant replacement $X\to RX$.

A monoidal model category is a model category $\M$ that is also a closed symmetric monoidal category with product $\otimes$ and unit $S \in \M$. The closed assumption guarantees that $X\otimes -$ is a left adjoint (hence preserves colimits), and this is needed so that $X\otimes -$ can be a left Quillen functor when $X$ is cofibrant. In order to ensure that the monoidal structure interacts nicely with the model structure (e.g. to guarantee it passes to a monoidal structure on the homotopy category $\ho(\M)$ whose unit is given by $S$) we must assume two conditions:

\begin{enumerate}
  \item Unit Axiom: For any cofibrant $X$, the map $QS\otimes X \to S\otimes X\cong X$ is a weak equivalence.
  \item Pushout Product Axiom: Given any cofibrations $f:X_0\to X_1$ and $g:Y_0\to Y_1$, the map $f\boxprod g: X_0\otimes Y_1 \coprod_{X_0\otimes Y_0}X_1\otimes Y_0 \to X_0\otimes Y_0$ is a cofibration. Furthermore, if either $f$ or $g$ is trivial then $f\boxprod g$ is trivial.
\end{enumerate}

If these hypotheses are satisfied then $\M$ is called a \textit{monoidal model category}. Note that the pushout product axiom forces $\otimes$ to be a Quillen bifunctor. Furthermore, it is sufficient to check the pushout product axiom on the generating maps $I$ and $J$, by Lemma 3.5 in \cite{SS00}. Lastly, it is worth remarking that none of our proofs require the unit axiom, so our results remain valid if one uses the definition of monoidal model category from \cite{SS00}, which does not require the unit axiom.

We turn now to the problem of placing model structures on categories of algebras. Let $\P$ be an operad valued in $\M$. For this discussion it will be fine to think of $\P$ as $Ass$ or $Com$. For a general discussion of operads see \cite{BM03}. A $\P$-algebra is an object $X$ of $\M$ with an action of $\P$ encoded by maps $\P(n)\otimes A^{\otimes n} \to A$ for all $n\geq 0$ satisfying $\Sigma_n$-equivariance, associativity, and unit conditions. Let $\P$-alg denote the category whose objects are $\P$-algebras and whose morphisms are $\P$-algebra homomorphisms (i.e. respect the $\P$-action). 

Let $P: \M \to \P$-alg be the free $\P$-algebra functor and let $U:\P$-alg$\to \M$ be the forgetful functor. Then $(P,U)$ is an adjoint pair. When $\P$ is $Ass$, the free monoid functor $X \mapsto S \coprod X \coprod X^{\otimes 2} \coprod \dots$ has been known to topologists for years as the James construction. When $\P$ is $Com$, the free commutative monoid functor $X\mapsto S \coprod X \coprod X^{\otimes 2}/\Sigma_2 \coprod \dots$ is sometimes called the $SP^\infty$ functor, or the Dold-Thom functor.

In order for there to be a model structure on $\P$-alg which is compatible with the model structure on $\M$, it must be the model structure which is transferred across the pair $(P,U)$ so that $(P,U$) forms a Quillen pair. In particular, a weak equivalence or fibration of $\P$-algebras will be a map which is a weak equivalence or fibration in $\M$. If such a model structure on $\P$-alg exists, we say it is \textit{inherited} from $\M$. Proving the existence of this model structure comes down to Lemma 2.3 in \cite{SS00}:

\begin{lemma} \label{ss00-lemma2.3}
Suppose $\M$ is cofibrantly generated and $T$ is a monad which commutes with filtered direct limits. If the domains of $T(I)$ and $T(J)$ are small relative to $T(I)$-cell and $T(J)$-cell respectively and 

\begin{enumerate}
\item $T(J)-$cell$\subset \sW$, or
\item All objects are fibrant and every $T$-algebra has a path object (factoring $\delta:X \to X\otimes X$ into $\stackrel{\simeq}{\hookrightarrow} \twoheadrightarrow$)
\end{enumerate}

then $T$-alg inherits a cofibrantly generated model structure with fibrations and weak equivalences created by the forgetful functor to $\M$.
\end{lemma}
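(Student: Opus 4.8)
The statement is a cofibrantly generated transfer (lifting) theorem, and the natural way to prove it is via Kan's recognition principle. The plan is to \emph{define} the three classes on $T$-alg directly: a map $f$ is a weak equivalence (resp. fibration) exactly when $Uf$ is one in $\M$, and a cofibration is a map with the left lifting property against all trivial fibrations; the candidates for generating (trivial) cofibrations are $T(I)$ and $T(J)$. The first task is bicompleteness of $T$-alg: limits are created by the right adjoint $U$ from the complete category $\M$, while cocompleteness of the category of algebras over a filtered-colimit-preserving monad on the cocomplete category $\M$ is standard (coproducts together with reflexive coequalizers, the latter constructed using precisely that $T$ commutes with filtered colimits). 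The 2-out-of-3 and retract axioms are then immediate: weak equivalences and fibrations are created by $U$ from classes with these closure properties, and any class defined by a lifting property is retract-closed.

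The engine of the argument is the usual adjunction yoga combined with the small object argument. By adjointness, a map $p$ of $T$-algebras has the right lifting property against $T(I)$ (resp. $T(J)$) if and only if $Up$ has it against $I$ (resp. $J$), i.e. if and only if $Up$ — hence $p$ — is a trivial fibration (resp. fibration); in particular cofibrations lift against trivial fibrations essentially by definition. The small object argument, legitimate exactly because of the stated smallness of the domains of $T(I)$ and $T(J)$, then produces for any $f$ a factorization $f = p\circ i$ with $i \in T(I)$-cell (hence a cofibration) and $p$ having RLP against $T(I)$ (hence a trivial fibration), and a factorization $f = p'\circ i'$ with $i' \in T(J)$-cell and $p'$ a fibration. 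Here one notes $T(J) \subseteq$ cofibrations, since $J \subseteq I$-cof and the left adjoint $T$ carries $I$-cof into $(TI)$-cof, so $i'$ is a cofibration; granting that $i'$ is also a weak equivalence, both factorization axioms hold, and the remaining lifting axiom follows by the standard retract bootstrapping (every trivial cofibration is a retract of a $T(J)$-cell, and $T(J)$-cells lift against fibrations).

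Everything therefore reduces to the \textbf{acyclicity statement}: every relative $T(J)$-cell complex $i\colon A \to B$ is a weak equivalence. Under hypothesis (1) this is assumed outright. Under hypothesis (2) it is the classical path-object argument: $i$ has the left lifting property against all fibrations (it is a $T(J)$-cell and fibrations have RLP against $T(J)$), and since all objects are fibrant, lifting $\mathrm{id}_A$ against the fibration $A \to \ast$ gives a retraction $r$ with $ri = \mathrm{id}_A$, while lifting $(\mathrm{id}_B, ir)$ against the path-object fibration $\mathrm{Path}(B) \to B\times B$ gives a right homotopy from $\mathrm{id}_B$ to $ir$ relative to $A$. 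Applying $U$ and passing to the genuine model category $\M$ — where $UA$ and $UB$ are fibrant (as $U$ preserves terminal objects) and $U(\mathrm{Path}(B))$ is a path object for $UB$ — these data exhibit $Ui$ as an isomorphism in $\ho(\M)$, hence as a weak equivalence in $\M$, hence $i \in \sW$ in $T$-alg.

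I expect the main obstacle to be exactly this last step under hypothesis (2): carrying out the path-object argument rigorously, in particular resisting the temptation to argue inside $T$-alg (whose model structure is not yet available) and instead transporting the homotopy data down to $\M$ so as to invoke the Whitehead-type fact that a map inverted in the homotopy category of a model category is a weak equivalence. The other point demanding care — bookkeeping rather than real difficulty — is the cocompleteness of $T$-alg and the availability of the small object argument, both of which rest on the hypothesis that $T$ commutes with filtered colimits and on the stated smallness of the domains of $T(I)$ and $T(J)$.
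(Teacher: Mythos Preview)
The paper does not actually prove this lemma; it merely quotes it as Lemma~2.3 of \cite{SS00} and uses it as a black box. Your outline is the standard transfer argument (Kan's recognition principle together with the path-object trick for condition~(2)) and is essentially the proof given in \cite{SS00}, so there is nothing to compare.
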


When the conditions of this lemma are satisfied, $\P$-alg inherits a cofibantly generated model structure in which $P(I)$ and $P(J)$ are the generating (trivial) cofibrations. The case $\P = Ass$ was treated in \cite{SS00} and checking the first condition of the lemma led to the introduction of the following axiom on a model category:

\begin{defn} Given a class of maps $\C$ in $\M$, let $\C \otimes \M$ denote the class of maps $f \otimes id_X$ where $f\in \C$ and $X\in \M$. A model category is said to satisfy the \textit{monoid axiom} if every map in (Trivial-Cofibrations $\otimes \M)$-cell is a weak equivalence. 
\end{defn}

Let $A$ be any monoid and let $R$ be any commutative monoid. In \cite{SS00} and the follow-up paper \cite{hovey-monoidal}, it is proven that if $\M$ satisfies the monoid axiom and if the domains of $I$ (resp. $J$) are small relative to $(\M \otimes I$)-cell (resp. $(\M \otimes J$)-cell), then the categories of (left or right) $A$-modules and of $R$-algebras inherit model structures from $\M$. We will require the same smallness hypothesis in Section \ref{sec:main}. It is satisfied automatically if $\M$ is a combinatorial model category. 

In \cite{SS00} it is proven that it is sufficient to check the monoid axiom on the generating trivial cofibrations and that many model categories of interest satisfy the monoid axiom. We will conduct a similar program for the strong commutative monoid axiom in Section \ref{sec:examples} and in Appendix \ref{appendix-generators}.

\section{A model structure on commutative monoids}
\label{sec:main}

We are now ready to prove the commutative analog of the work summarized above. We first introduce the commutative analog to the monoid axiom.

\begin{defn} \label{defn:comm-monoid-ax}
A monoidal model category $\M$ is said to satisfy the \textit{commutative monoid axiom} if whenever $h$ is a trivial cofibration in $\M$ then $h^{\boxprod n}/\Sigma_n$ is a trivial cofibration in $\M$ for all $n>0$.
\end{defn}

Under this hypothesis, we state our main theorem:

\begin{theorem} \label{thm:commMonModel}
Let $\M$ be a cofibrantly generated monoidal model category satisfying the commutative monoid axiom and the monoid axiom, and assume that the domains of the generating maps $I$ (resp. $J$) are small relative to $(I\otimes \M)$-cell (resp. $(J\otimes \M$)-cell). Let $R$ be a commutative monoid in $\M$. Then the category $CAlg(R)$ of commutative $R$-algebras is a cofibrantly generated model category in which a map is a weak equivalence or fibration if and only if it is so in $\M$. In particular, when $R=S$ this gives a model structure on commutative monoids in $\M$.
\end{theorem}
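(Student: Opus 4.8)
The plan is to apply Lemma~\ref{ss00-lemma2.3} to the free--forgetful adjunction $(P_R,U)$ between $\M$ and $CAlg(R)$, equivalently to the monad $T=UP_R$ on $\M$ whose algebras are precisely the commutative $R$-algebras; once its hypotheses are verified, the lemma produces exactly the asserted model structure, with $P_R(I)$ and $P_R(J)$ as generating cofibrations and generating trivial cofibrations. Three of the four things to check are routine. That $\M$ is cofibrantly generated is assumed. That $T$ preserves filtered colimits holds because $P_R(X)\cong R\otimes\left(\coprod_{n\geq 0}X^{\otimes n}/\Sigma_n\right)$ is built from coproducts, $\Sigma_n$-coinvariants, and the functors $X\mapsto X^{\otimes n}$, each of which commutes with filtered colimits (since $\otimes$ preserves colimits in each variable), while $U$ preserves filtered colimits because they are created underlying. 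Finally, the smallness of the domains of $P_R(I)$ and $P_R(J)$ relative to $P_R(I)$-cell and $P_R(J)$-cell is deduced from the assumed smallness of the domains of $I$ and $J$ relative to $(I\otimes\M)$-cell and $(J\otimes\M)$-cell, together with the cellular description of $P_R(-)$-cell maps recorded below; this bookkeeping, in the style of \cite{SS00}, we carry out in Appendix~\ref{appendix-proof}. It then remains to verify condition~(1) of Lemma~\ref{ss00-lemma2.3}: every map in $P_R(J)$-cell is a weak equivalence in $\M$.

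The engine of the proof is a filtration of pushouts of free maps. Since $CAlg(R)$ is the coslice category $R/CAlg(S)$, pushouts in it are computed in $CAlg(S)$; and since $P_R\cong R\otimes P(-)$ with $P$ the free commutative monoid functor, a pushout in $CAlg(R)$ of $P_R(j)\colon P_R(A)\to P_R(B)$ along a map $P_R(A)\to Z$ is simply the pushout $Z'=Z\otimes_{P(A)}P(B)$ formed in $CAlg(S)$, whose underlying object depends on $Z$ only through its commutative monoid structure and the map $A\to Z$ in $\M$. For such a $Z'$ I claim there is a sequence $Z=Z_0\to Z_1\to Z_2\to\cdots$ in $\M$ with $Z'\cong\colim_n Z_n$, in which $Z_n$ is the pushout $Z_{n-1}\coprod_{Z\otimes(Q^n_j/\Sigma_n)}Z\otimes(B^{\otimes n}/\Sigma_n)$, formed \emph{in $\M$}, of the map $id_Z\otimes(j^{\boxprod n}/\Sigma_n)$ along a canonical attaching map $Z\otimes(Q^n_j/\Sigma_n)\to Z_{n-1}$, where $j^{\boxprod n}\colon Q^n_j\to B^{\otimes n}$ is the $n$-fold iterated pushout product of $j$ with itself, equipped with its natural $\Sigma_n$-action. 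Granting this, let $j\in J$. By the commutative monoid axiom (Definition~\ref{defn:comm-monoid-ax}) each $j^{\boxprod n}/\Sigma_n$ with $n>0$ is a trivial cofibration, so $id_Z\otimes(j^{\boxprod n}/\Sigma_n)$ lies in $(\text{Trivial-Cofibrations}\otimes\M)$, hence each layer $Z_{n-1}\to Z_n$ lies in $(\text{Trivial-Cofibrations}\otimes\M)$-cell, and therefore so does the transfinite composite $Z=Z_0\to Z'$. By the monoid axiom this composite is a weak equivalence. Transfinite composites of such pushout maps are again cell complexes built from $(\text{Trivial-Cofibrations}\otimes\M)$, and retracts of weak equivalences are weak equivalences, so in fact every map in $P_R(J)$-cof --- a fortiori every map in $P_R(J)$-cell --- is a weak equivalence, which is condition~(1). (We make no use of alternative~(2) of Lemma~\ref{ss00-lemma2.3}, as we do not assume all objects fibrant.)

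The one substantial point is the filtration itself: the precise construction of the objects $Z_n$ and $Q^n_j$, the identification of each layer $Z_{n-1}\to Z_n$ with the displayed pushout in $\M$, and the verification that $\colim_n Z_n$ carries the commutative $R$-algebra structure of $Z'$. This is the commutative counterpart of the filtration used for associative monoids in \cite{SS00}; the new feature --- and the reason the monoid axiom does not suffice on its own --- is the presence of the $\Sigma_n$-coinvariants in the layer maps, which is exactly what the commutative monoid axiom is designed to control (in the associative case the analogous layers carry no $\Sigma_n$-quotient, so the monoid axiom alone does the job). Because verifying all of this is long and combinatorial, we follow the precedent of \cite{SS00} and defer it, together with the smallness bookkeeping, to Appendix~\ref{appendix-proof}. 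The last sentence of the theorem is then the special case $R=S$, for which $CAlg(S)$ is the category of commutative monoids in $\M$.
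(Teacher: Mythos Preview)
Your proposal is correct and follows essentially the same strategy as the paper: apply Lemma~\ref{ss00-lemma2.3}, reduce the key condition to the filtration of a free-extension pushout into layers of the form $id_Z\otimes(j^{\boxprod n}/\Sigma_n)$, invoke the commutative monoid axiom to make each layer a trivial cofibration tensored with an object, and then apply the monoid axiom; the filtration itself is deferred to Appendix~\ref{appendix-proof}, exactly as the paper does. The only structural difference is that the paper first establishes the model structure on $\CMon(\M)=CAlg(S)$ and then obtains $CAlg(R)$ as the under-category $R\downarrow\CMon(\M)$ (citing \cite{hovey-book} and \cite{hirschhorn-under-categories} for cofibrant generation of under-categories), whereas you apply the transfer lemma directly to the adjunction $\M\leftrightarrows CAlg(R)$ and reduce the pushout analysis to $CAlg(S)$ via the coslice description; both routes are valid and amount to the same computation.
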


It is clear from this description of $CAlg(R)$ that if $\M$ is combinatorial then $CAlg(R)$ is combinatorial (see \cite{beke-sheafifiable}, 2.3). 
Furthermore, if $\M$ is simplicial then $CAlg(R)$ is simplicial: it is cotensored over simplicial sets and the cotensor commutes with the forgetful functor (i.e. for $X$ in $\M$ and $K$ a simplicial set, $U(X^K)\cong U(X)^K$, since $X^K$ inherits its commutative monoid structure from $X$), the functor $X\mapsto X^K$ has a left adjoint for all $X$ and $K$ by the adjoint functor theorem, and this left adjoint provides the tensoring of $\M$ over simplicial sets. So $CAlg(R)$ is a simplicial category. To see that it is a simplicial model category, use that (trivial) fibrations are created in $\M$ and use the pullback formulation of the SM7 axiom. 

As the generating (trivial) cofibrations of $CAlg(R)$ are of the form $R\otimes \Sym(I)$ (resp. $R\otimes \Sym(J)$), these are cofibrant in $CAlg(R)$ if the generating (trivial) cofibrations of $\M$ are cofibrant and if $\M$ satisfies the commutative monoid axiom. Hence, the property that the domains of the generating cofibrations are cofibrant (sometimes called tractability) also passes from $\M$ to $CAlg(R)$.

\begin{proof}[Proof sketch]
We will focus first on the case where $R$ is the monoidal unit $S$, and discuss general $R$ at the end. As commutative $S$-algebras are simply commutative monoids, we denote the category of commutative monoids $\CMon(\M)$ rather than $CAlg(S)$. We will verify condition (1) of Lemma \ref{ss00-lemma2.3} for the monad coming from the $(\Sym, U)$ adjunction between $\M$ and $\CMon(\M)$. Note that just as in Lemma 2.3 of \cite{SS00}, limits in $\CMon(\M)$ are created in $\M$ and colimits in $\CMon(\M)$ exist because $U$ preserves filtered colimits. Thus, $\CMon(\M)$ is bicomplete. 
Let $J$ denote the generating trivial cofibrations of $\M$. We must prove that maps in $\Sym(J)$-cell are weak equivalences. Given a trivial cofibration $h:K \to L$ in $\M$, we form the following pushout square in $\CMon(\M)$:
\begin{align*}
\xymatrix{\Sym(K) \ar[r] \ar[d] \po & \Sym(L) \ar[d] \\ X \ar[r] & P}
\end{align*}
We must prove that the bottom map is in the class (Trivial-Cofibrations $\otimes \M)$-cell, so that the monoid axiom implies that transfinite compositions of such maps are weak equivalences in $\M$ (hence weak equivalences of commutative monoids). 

Of course, in $\CMon(\M)$, the pushout is simply the tensor product, so $P\cong X\otimes_{\Sym(K)}\Sym(L)$, but we will not make use of this fact. Following \cite{SS00}, we construct a filtration of the map of commutative monoids $X\to P$ as a composition $P_n \to P_{n+1}$ of maps formed by pushout diagrams in $\M$. Doing so requires the decomposition of $\Sym(K) = \coprod_n \Sym^n(K)$ where $\Sym^n(K) = K^{\otimes n}/\Sigma_n$. 

Thinking of $P$ as formal products of elements from $X$ and from $L$ with relations in $K$ leads to a consideration of $n$-dimensional cubes to build products of length $n$ from the letters $X,K,L$. Because the map $\Sym(K)\to X$ is adjoint to a map $K\to X$, we will in fact only need to consider $n$-dimensional cubes whose vertices are length $n$ words in the letters $K$ and $L$. Formally, for any subset $D$ of $[n] = \{1,2,\dots,n\}$ we obtain a vertex $C_1\otimes \dots \otimes C_n$ with $C_i = K$ if $i\notin D$ and $C_j = L$ if $j\in D$. The punctured cube is the cube with the vertex $L^{\otimes n}$ removed. The map $h^{\boxprod n}$ is the induced map from the colimit $Q_n$ of the punctured cube to $L^{\otimes n}$.

There is an action of $\Sigma_n$ on the cube which permutes the letters in the words (equivalently, which permutes the vertices in the cube in a way coherent with respect to the edges of the cube). Explicitly, the action is defined as follows. Any $\sigma \in \Sigma_n$ sends the vertex defined above to the vertex corresponding to $\sigma(D) \subset [n]$ using the action of $\Sigma_{|D|}$ on the $X$'s and $\Sigma_{n-|D|}$ on the $Y$'s. This action yields a $\Sigma_n$-action on $h^{\boxprod n}:Q_n \to L^{\otimes n}$, and in a moment we will pass to $\Sigma_n$-coinvariants.

We now show how to obtain $P_n$ (which in this analogy is to be thought of as formal products of length $n$) from the cubes we have just described. 
The steps in the filtration of $X\to P$ are formed by pushouts of the maps $id_X \otimes h^{\boxprod n}/\Sigma_n$:
\begin{align*}
\xymatrix{X \otimes Q_n/\Sigma_n \ar[r] \ar[d] \po & X\otimes L^{\otimes n}/\Sigma_n \ar[d] \\ P_{n-1} \ar[r] & P_n}
\end{align*}

The proof that the $P_n$ provide a filtration of $X\to P$ is delayed until Appendix \ref{appendix-proof}. Assuming the commutative monoid axiom, the maps $h^{\boxprod n}/\Sigma_n$ are trivial cofibrations. Thus, the map $X\to P$ is a transfinite composite of pushouts of maps in $\M \otimes \{$trivial cofibrations$\}$. Hence, by the monoid axiom, $X\to P$ is a weak equivalence. Similarly, for any transfinite composition $f$ of pushouts of maps of the form $\Sym(K)\to \Sym(L)$, we may realize $f$ as a transfinite composition of maps $X\to P$ of the form above. As a transfinite composition of transfinite compositions is still a transfinite composition, the monoid axiom applies again and proves $f$ is a weak equivalence. Lemma \ref{ss00-lemma2.3} now applies to produce the required model structure on commutative monoids.

To handle the case of commutative $R$-algebras, note that there is an equivalence of categories between $CAlg(R)$ and $(R\downarrow \CMon(\M))$, the category of commutative monoids under $R$. So we may apply the remark after Proposition 1.1.8 of \cite{hovey-book} to conclude that this is a model category with cofibrations, fibrations, and weak equivalences inherited from $\CMon(\M)$. Note that this is a different approach from the one provided in \cite{SS00} because we do not pass through $R$-modules en route to commutative $R$-algebras. That $CAlg(R)$ is cofibrantly generated follows from \cite{hirschhorn-under-categories}, where it is also shown that the generating cofibrations are given by the set $I_R$ of maps in $(R\downarrow \CMon(\M))$ where the map in $\CMon(\M)$ is in $I$. Under the equivalence of categories between $CAlg(R)$ and $(R\downarrow \CMon(\M))$, such maps are sent to maps in $R\otimes \Sym(I)$. We can similarly identify the generating trivial cofibrations as $R\otimes \Sym(J)$.

\end{proof}

\begin{remark} \label{remark:weak-cmon-axiom}
Notice that the proof in fact requires less than the full strength of the hypotheses. Rather than assuming the commutative monoid axiom and the monoid axiom separately, we could have assumed that transfinite compositions of pushouts of maps in $\{\M \otimes h^{\boxprod n}/\Sigma_n \;|\; h$ is a trivial cofibration$\}$ are contained in the weak equivalences. We will refer to this property as the weak commutative monoid axiom. Certain model categories discussed in Section \ref{sec:examples} only satisfy this axiom and not the commutative monoid axiom. However, for reasons which will become clear in Corollary \ref{cor:semi-on-comm-alg} we have chosen the commutative monoid axiom as the appropriate axiom for our applications.
\end{remark}

The full proof in Appendix \ref{appendix-proof} will in fact prove more than just the theorem. It will also prove the commutative analog to Lemma 6.2 of \cite{SS00}, from which one can deduce the proposition below regarding when cofibrations of commutative monoids forget to cofibrations in $\M$. It is well-known to experts that obtaining the correct behavior of cofibrations under the forgetful functor is subtle in the commutative setting. Indeed, this was the motivation behind the convenient model structures introduced in \cite{shipley-positive} and \cite{stolz-thesis}. In order to guarantee the desired behavior we must strengthen the commutative monoid axiom.

\begin{defn}
A monoidal model category $\M$ is said to satisfy the \textit{strong commutative monoid axiom} if whenever $h$ is a (trivial) cofibration in $\M$ then $h^{\boxprod n}/\Sigma_n$ is a (trivial) cofibration in $\M$ for all $n>0$. In particular, we are now assuming that cofibrations are closed under the operation $(-)^{\boxprod n}/\Sigma_n$.
\end{defn}

\begin{prop} \label{prop:cofibrations-forget}
Suppose $\M$ satisfies the strong commutative monoid axiom. Then for any commutative monoid $R$, a cofibration in $CAlg(R)$ with source cofibrant in $\M$ is a cofibration in $\M$.
\end{prop}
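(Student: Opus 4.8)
The plan is to run the strategy of the proof of Theorem \ref{thm:commMonModel}, but tracking cofibrations in place of weak equivalences. First I would reduce to the case $R = S$: since $CAlg(R)$ is equivalent to the undercategory $(R\downarrow\CMon(\M))$, in which (by the remark after Proposition 1.1.8 of \cite{hovey-book}) the cofibrations are exactly the maps that are cofibrations in $\CMon(\M)$, and since the underlying object in $\M$ of a commutative $R$-algebra is the underlying object of its underlying commutative monoid, it suffices to show that a cofibration in $\CMon(\M)$ with source cofibrant in $\M$ forgets to a cofibration in $\M$.

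Next I would reduce to relative $\Sym(I)$-cell complexes. Given a cofibration $f\colon A\to B$ in $\CMon(\M)$, the small object argument factors it as $A\to C\to B$ with $A\to C$ a transfinite composite of pushouts of maps in $\Sym(I)$ and $C\to B$ a trivial fibration; since $f$ is a cofibration it lifts against $C\to B$, so $f$ is a retract, with source still $A$, of the cell complex $A\to C$. Because cofibrations in $\M$ are closed under retracts, it is enough to prove: if $A$ is cofibrant in $\M$ and $A\to C$ is a relative $\Sym(I)$-cell complex, then its underlying map is a cofibration in $\M$ and $C$ is cofibrant in $\M$. Transfinite induction — using that transfinite composites of cofibrations in $\M$ are cofibrations, and that the codomain of a cofibration in $\M$ with cofibrant domain is cofibrant, so the inductive hypothesis survives successor and limit stages — reduces this to a single cell attachment.

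For the single cell attachment I would invoke the filtration constructed in Appendix \ref{appendix-proof} (the commutative analogue of Lemma 6.2 of \cite{SS00}): for a generating cofibration $h\colon K\to L$ in $I$ and cofibrant $X$ in $\M$, the pushout $X\to P$ of $\Sym(K)\to\Sym(L)$ along $\Sym(K)\to X$ in $\CMon(\M)$ is filtered by maps $P_{n-1}\to P_n$, each of which is the pushout in $\M$ of $id_X\otimes(h^{\boxprod n}/\Sigma_n)$ along $X\otimes Q_n/\Sigma_n\to P_{n-1}$. The strong commutative monoid axiom makes each $h^{\boxprod n}/\Sigma_n$ a cofibration in $\M$; since $\otimes$ is a Quillen bifunctor and $X$ is cofibrant, $X\otimes(-)$ preserves cofibrations, so $id_X\otimes(h^{\boxprod n}/\Sigma_n)$ is a cofibration in $\M$; hence every $P_{n-1}\to P_n$ is a cofibration in $\M$, so the transfinite composite $X\to P$ is too, and $P$ is cofibrant because $X$ is. This closes the induction.

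The main obstacle is not in any of the above, which is essentially formal, but in the construction of the filtration $P_0\to P_1\to\cdots$ itself — the verification that the pushout in $\CMon(\M)$ is computed in $\M$ by these successive pushouts along $id_X\otimes(h^{\boxprod n}/\Sigma_n)$ — and this is exactly what is deferred to Appendix \ref{appendix-proof}. The one point in the formal part that still requires care is the bookkeeping of cofibrancy through the transfinite induction: each cell attachment must be checked to preserve cofibrancy of the object before the next cell can be attached, and one must pass to a cell complex with a fixed cofibrant domain before invoking closure of cofibrations under retracts.
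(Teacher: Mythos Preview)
Your proposal is correct and follows essentially the same route as the paper: reduce to $R=S$ via the undercategory identification, reduce cofibrations to relative $\Sym(I)$-cell complexes via the retract argument, and then use the filtration of Appendix~\ref{appendix-proof} together with the strong commutative monoid axiom and cofibrancy of $X$ to see that each $P_{n-1}\to P_n$, and hence $X\to P$, is a cofibration in $\M$. The only difference is cosmetic: you spell out the retract reduction and the transfinite bookkeeping explicitly, whereas the paper absorbs these into the reference to \cite{SS00} and states the conclusion directly as part~(2) of Lemma~\ref{lemma:appendix-main}.
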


\begin{corollary} \label{cor:cofibrants-forget}
Suppose $\M$ satisfies the strong commutative monoid axiom and that $S$ is cofibrant in $\M$. Then any cofibrant commutative monoid is cofibrant in $\M$. If in addition $R$ is cofibrant in $\M$ then any cofibrant commutative $R$-algebra is cofibrant in $\M$.
\end{corollary}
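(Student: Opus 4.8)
The plan is to deduce both assertions directly from Proposition \ref{prop:cofibrations-forget}, once we observe that the initial object of $CAlg(R)$ is $R$ itself. Indeed, $R$ is a commutative $R$-algebra via its identity map, and for any commutative $R$-algebra $A$ the structure map $R \to A$ is the unique morphism of commutative $R$-algebras out of $R$; hence $R$ is initial in $CAlg(R)$. In the special case $R = S$ this says that $S$ is initial in $\CMon(\M)$, which also follows from the description of the free functor, since $\Sym(\emptyset) = S \coprod \emptyset \coprod \emptyset^{\otimes 2}/\Sigma_2 \coprod \cdots \cong S$ for $\emptyset$ the initial object of $\M$.

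First I would treat the case $R = S$. Let $X$ be a cofibrant commutative monoid, so that the map $S \to X$ from the initial object is a cofibration in $\CMon(\M)$. By hypothesis $S$ is cofibrant in $\M$, so Proposition \ref{prop:cofibrations-forget} (applied with $R = S$, whose source $S$ is indeed cofibrant in $\M$) shows that $S \to X$ is a cofibration in $\M$. Since $\emptyset \to S$ is a cofibration in $\M$, the composite $\emptyset \to S \to X$ is a cofibration in $\M$, i.e. $X$ is cofibrant in $\M$. The case of a general cofibrant commutative $R$-algebra $X$ is identical: the map $R \to X$ is then a cofibration in $CAlg(R)$, the extra hypothesis that $R$ is cofibrant in $\M$ lets us apply Proposition \ref{prop:cofibrations-forget} to conclude $R \to X$ is a cofibration in $\M$, and composing with the cofibration $\emptyset \to R$ shows $X$ is cofibrant in $\M$.

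Since the argument is a short consequence of Proposition \ref{prop:cofibrations-forget}, I do not expect a genuine obstacle here; the only point requiring a moment's care is the identification of the initial object of $CAlg(R)$ — equivalently, of $(R \downarrow \CMon(\M))$ — with $R$, so that the word ``cofibrant'' unwinds to ``$R \to X$ is a cofibration,'' which is exactly the input needed for the proposition, with the cofibrancy of $R$ (resp. $S$) in $\M$ supplying the required cofibrancy of the source.
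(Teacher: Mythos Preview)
Your argument is correct and is exactly the intended deduction: the paper does not spell out a proof of this corollary, treating it as immediate from Proposition~\ref{prop:cofibrations-forget} once one notes that the initial object of $CAlg(R)$ is $R$ (respectively $S$ in $\CMon(\M)$). Your identification of the initial object and the composition with $\emptyset \to R$ are precisely the points that make the corollary follow.
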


See Appendix \ref{appendix-proof} for the proof of this proposition.

\begin{corollary}
Assume $S$ is cofibrant in $\M$ and that $\M$ satisfies the strong commutative monoid axiom. If $f$ is a cofibration between cofibrant objects then $\Sym(f)$ is a cofibration in $\M$. In particular, if $X$ is cofibrant in $\M$ then $\Sym(X)$ is cofibrant in $\M$.
\end{corollary}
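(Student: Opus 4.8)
The plan is to deduce this corollary from Proposition~\ref{prop:cofibrations-forget} and Corollary~\ref{cor:cofibrants-forget} rather than to re-run a filtration argument. First I would record that the free commutative monoid functor $\Sym\colon \M \to \CMon(\M) = CAlg(S)$ is left Quillen: this is exactly the adjunction $(\Sym,U)$ underlying Theorem~\ref{thm:commMonModel}, under which $\Sym(I)$ and $\Sym(J)$ are the generating (trivial) cofibrations of $\CMon(\M)$, so $\Sym$ preserves cofibrations. Moreover $\Sym$ is a left adjoint, hence preserves all colimits; in particular $\Sym(\emptyset) = S \coprod \emptyset \coprod \emptyset^{\otimes 2}/\Sigma_2 \coprod \cdots = S$, which is the initial object of $\CMon(\M)$. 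Consequently $\Sym$ also carries cofibrant objects of $\M$ to cofibrant objects of $\CMon(\M)$.

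Now suppose $f\colon X \to Y$ is a cofibration between cofibrant objects of $\M$. Then $\Sym(X)$ is a cofibrant commutative monoid, and since $S$ is cofibrant in $\M$ and $\M$ satisfies the strong commutative monoid axiom, Corollary~\ref{cor:cofibrants-forget} tells us $\Sym(X)$ is in fact cofibrant in $\M$. At the same time $\Sym(f)\colon \Sym(X) \to \Sym(Y)$ is a cofibration in $CAlg(S) = \CMon(\M)$ whose source $\Sym(X)$ is cofibrant in $\M$, so Proposition~\ref{prop:cofibrations-forget} (with $R = S$) applies and shows $\Sym(f)$ is a cofibration in $\M$. The final assertion is the instance $f = (\emptyset \to X)$: here $\emptyset$ and $X$ are both cofibrant, $\Sym(\emptyset \to X)$ is the cofibration $S \to \Sym(X)$, and composing with the cofibration $\emptyset \to S$ exhibits $\Sym(X)$ as cofibrant in $\M$ (equivalently, apply Corollary~\ref{cor:cofibrants-forget} directly to the cofibrant commutative monoid $\Sym(X)$).

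The substance is entirely contained in Proposition~\ref{prop:cofibrations-forget}, so a self-contained proof would have to confront the filtration of $\Sym(f)$ directly: taking $A = \Sym(X)$, $K = X$, $L = Y$ and $h = f$ in the argument sketched for Theorem~\ref{thm:commMonModel}, one writes $\Sym(f)\colon \Sym(X) \to \Sym(Y)$ as a transfinite composite of pushouts of the maps $\mathrm{id}_{\Sym(X)} \otimes f^{\boxprod n}/\Sigma_n$. Each $f^{\boxprod n}/\Sigma_n$ is a cofibration by the strong commutative monoid axiom, and because $\Sym(X)$ is cofibrant --- which is precisely where the hypothesis that $S$ is cofibrant enters --- the pushout product axiom promotes $\mathrm{id}_{\Sym(X)} \otimes f^{\boxprod n}/\Sigma_n$ to a cofibration; hence the transfinite composite $\Sym(f)$ is a cofibration. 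I expect the only genuine obstacle in this route to be verifying that the $P_n$ really assemble into such a filtration with $\colim P_n \cong \Sym(Y)$ over $\Sym(X)$, which is the combinatorial bookkeeping deferred to Appendix~\ref{appendix-proof}.
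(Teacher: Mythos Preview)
Your argument is correct and follows the paper's own proof essentially verbatim: observe that $\Sym$ is left Quillen so $\Sym(f)$ is a cofibration in $\CMon(\M)$, use cofibrancy of the source $X$ together with Corollary~\ref{cor:cofibrants-forget} to get $\Sym(X)$ cofibrant in $\M$, and then invoke Proposition~\ref{prop:cofibrations-forget} with $R=S$. Your additional paragraph unpacking the filtration is accurate and amounts to a summary of Lemma~\ref{lemma:appendix-main}(2), but it is not needed for the corollary itself.
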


\begin{proof}

Because the model structure on $\CMon(\M)$ is transferred from that of $\M$, the functor $\Sym(-)$ is left Quillen, and hence preserves cofibrations. So $\Sym(f)$ is a cofibration of commutative monoids because $f$ is a cofibration in $\M$. If the source $K$ of $f$ is cofibrant then the source of $\Sym(f)$ is a cofibrant commutative monoid, by applying $\Sym(-)$ to the cofibration $\emptyset \hookrightarrow K$. By Corollary \ref{cor:cofibrants-forget}, the source of $\Sym(f)$ is cofibrant in $\M$. By Proposition \ref{prop:cofibrations-forget}, $\Sym(f)$ is a cofibration in $\M$.
\end{proof}

Recall that the point of positive model structures on diagram spectra (e.g. symmetric spectra or orthogonal spectra) is to break the cofibrancy of $S$ and so avoid Lewis's obstruction \cite{lewis} to having a model structure on commutative ring spectra. Thus, these corollaries do not apply to positive model categories of spectra. In \cite{shipley-positive}, a variant on the positive model structure is introduced in which cofibrant commutative ring spectra are cofibrant as spectra. This model structure was known in that paper as the convenient model structure, and later as the positive flat model structure. We do not know how to obtain this `convenient' property for general model categories. We suspect it has something to do with forcing the cofibrations to contain the monomorphisms.

The proof of Theorem \ref{thm:commMonModel} makes clear precisely where the monoid axiom is being used, and hence why the smallness hypotheses are needed. If $\M$ does not satisfy the monoid axiom, then we can make this step work by assuming $X$ is a cofibrant commutative monoid (note that $R$ does not need to be cofibrant). In this case, \cite{hovey-monoidal} and \cite{spitzweck-thesis} make it clear that a semi-model structure can be obtained. The following corollary is needed for \cite{white-localization}, and the notion of a semi-model category is defined directly afterwards. The proof of this corollary is delayed until after Proposition \ref{prop:build-semi}.

\begin{corollary} \label{cor:semi-on-comm-alg}
Let $\M$ be a cofibrantly generated monoidal model category satisfying the commutative monoid axiom, and assume that the domains of the generating maps $I$ (resp. $J$) are small relative to $(I\otimes \M)$-cell (resp. $(J\otimes \M$)-cell). 
Then for any commutative monoid $R$, the category of commutative $R$-algebras is a cofibrantly generated semi-model category in which a map is a weak equivalence or fibration if and only if it is so in $\M$.
\end{corollary}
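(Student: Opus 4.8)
The plan is to repeat the proof of Theorem~\ref{thm:commMonModel} almost verbatim, replacing the appeal to Lemma~\ref{ss00-lemma2.3} by its semi-model category analogue: the transfer theorem for semi-model structures on algebras over a monad, developed in \cite{hovey-monoidal} for monoids and in \cite{spitzweck-thesis} for algebras over operads. That analogue weakens condition (1) of Lemma~\ref{ss00-lemma2.3}: in place of requiring that \emph{every} map in $\Sym(J)$-cell be a weak equivalence, it requires only that this hold for transfinite composites of pushouts of maps in $\Sym(J)$ whose source is a \emph{cofibrant} commutative $R$-algebra. All the remaining ingredients are unchanged from the proof of Theorem~\ref{thm:commMonModel}: the needed smallness of the domains of $\Sym(I)$ and $\Sym(J)$ follows from the assumed smallness relative to $(I\otimes\M)$-cell and $(J\otimes\M)$-cell, fibrations and weak equivalences are created by the forgetful functor, and the passage from the case $R=S$ to arbitrary $R$ goes through the equivalence $CAlg(R)\simeq(R\downarrow\CMon(\M))$ using \cite{hirschhorn-under-categories} and the remark after Proposition~1.1.8 of \cite{hovey-book} (both valid for semi-model categories), yielding $R\otimes\Sym(I)$ and $R\otimes\Sym(J)$ as generating (trivial) cofibrations.

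So the sole point requiring work is this: given a cofibrant commutative monoid $X$ and a trivial cofibration $h\colon K\to L$ in $\M$, the pushout $X\to P = X\amalg_{\Sym(K)}\Sym(L)$ in $\CMon(\M)$ is a weak equivalence, and likewise for transfinite composites of such maps (note that $\Sym(h)$ is a cofibration in $\CMon(\M)$, so each $P$ is again cofibrant and the class of maps in play is closed under the relevant composites). I would run the identical filtration $X = P_0 \to P_1 \to \cdots$ of the proof of Theorem~\ref{thm:commMonModel}, in which $P_{n-1}\to P_n$ is a pushout of $\mathrm{id}_X\otimes(h^{\boxprod n}/\Sigma_n)$, and in which by the commutative monoid axiom each $h^{\boxprod n}/\Sigma_n$ is a trivial cofibration in $\M$. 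The goal is thus to show that the role played by the monoid axiom in Theorem~\ref{thm:commMonModel} --- namely the conclusion that a transfinite composite of pushouts of maps of the form $\M\otimes(\text{trivial cofibration})$ is a weak equivalence --- is here played by the cofibrancy of $X$ in $\CMon(\M)$.

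Establishing that is the main obstacle. The difficulty is that a cofibrant commutative monoid need not be cofibrant in $\M$: by Corollary~\ref{cor:cofibrants-forget} that conclusion would require the \emph{strong} commutative monoid axiom together with cofibrancy of $S$, and neither is assumed here, so one cannot simply quote the pushout product axiom to control $X\otimes(-)$. Instead I would argue by transfinite induction along a presentation of $X$ as a $\Sym(I)$-cell complex $S = X_0 \to X_1 \to \cdots \to X$ (retracts being harmless), proving at each stage a suitable ``flatness'' property guaranteeing that attaching cells along $\Sym(J)$ to $X_\beta$, then to the intermediate objects $P_{n-1}$, and so on, produces weak equivalences; the base case is $S\otimes(-) = \mathrm{id}$, and the successor step is obtained by tensoring the filtration of $X_\beta\to X_{\beta+1}$ with the data at hand and feeding in the inductive hypothesis, paying close attention to how $(-)\otimes(\text{trivial cofibration})$ interacts with the punctured-cube colimits $Q_n$ and their $\Sigma_n$-quotients. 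Carrying out this bookkeeping --- keeping the relevant maps under control even though $i^{\boxprod n}/\Sigma_n$ for $i\in I$ need not be a cofibration in the absence of the strong axiom --- is exactly the work done in \cite{hovey-monoidal} and \cite{spitzweck-thesis}, whose arguments apply here precisely because the commutative monoid axiom furnishes the input on $(-)^{\boxprod n}/\Sigma_n$ that the failure of $\Sigma$-cofibrancy of $Com$ would otherwise leave missing.
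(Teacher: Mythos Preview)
Your outline matches the paper's in structure---same filtration, same reduction to controlling $X\otimes(h^{\boxprod n}/\Sigma_n)$, same passage to general $R$ via the undercategory---but diverges at the one substantive step. The paper does not attempt the transfinite induction you sketch. Instead it argues directly: when $X$ is cofibrant, the map $X\otimes(h^{\boxprod n}/\Sigma_n)$ is $(\emptyset\hookrightarrow X)\boxprod(h^{\boxprod n}/\Sigma_n)$, which is a trivial cofibration by the pushout product axiom; hence each $P_{n-1}\to P_n$ is a pushout of a trivial cofibration, and $X\to P$ is a trivial cofibration. That is the whole argument for $R=S$; the remainder is formal bookkeeping for the undercategory and for matching the two descriptions of the semi-model structure on $CAlg(R)$.

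You are right to flag that this one-line pushout product argument uses ``$X$ cofibrant in $\M$,'' whereas what the semi-model transfer hands you is ``$X$ cofibrant in $\CMon(\M)$,'' and without the \emph{strong} commutative monoid axiom these need not coincide. The paper's proof is silent on this point (though the remark immediately following the corollary, about semi-model structures \emph{over} $\M$ requiring $S$ and $R$ cofibrant, suggests the author is aware of the distinction). Your proposed inductive workaround is a reasonable strategy, but as written it is only a gesture: the assertion that the arguments of \cite{hovey-monoidal} and \cite{spitzweck-thesis} ``apply here precisely because the commutative monoid axiom furnishes the input'' does not hold without further work, since those references treat associative monoids and $\Sigma$-cofibrant operads respectively, and $Com$ is neither. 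If you want your route to be a genuine alternative you will have to actually run the induction and show how the commutative monoid axiom on trivial cofibrations alone controls $X_\beta\otimes(-)$ at each successor step, rather than outsource it to references that do not cover this case.
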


A semi-model category satisfies all the axioms of a model category except that the lifting of trivial cofibrations against fibrations is only true for trivial cofibrations with cofibrant domains, and only maps $f$ with cofibrant domain are guaranteed to factor into a trivial cofibration followed by a fibration. In particular, if all objects are cofibrant then a semi-model structure is the same as a model structure. Formally, we define (\cite{spitzweck-thesis}, Definition 1):

\begin{defn} \label{defn:semi-model}
A \textit{semi-model category} is a bicomplete category $\D$, an adjunction $F:\M \leftrightarrows \D:U$ where $\M$ is a model category, and subcategories of weak equivalences, fibrations, and cofibrations in $\D$ satisfying the following axioms:
\begin{enumerate}
\item $U$ preserves fibrations and trivial fibrations.
\item $\D$ satisfies the two out of three axiom and the retract axiom.
\item Every map in $\D$ can be functorially factored into a cofibration followed by a trivial fibration. Every map in $\D$ whose domain is cofibrant in $\D$ can be functorially factored into a trivial cofibration followed by a fibration.
\item Cofibrations in $\D$ have the left lifting property with respect to trivial fibrations. Trivial cofibrations in $\D$ whose domain is cofibrant in $\D$ have the left lifting property with respect to fibrations.
\item The initial object in $\D$ is cofibrant in $\D$.
\item Fibrations and trivial fibrations are closed under pullback.
\end{enumerate}
$\D$ is said to be \textit{cofibrantly generated} if there are sets of morphisms $I'$ and $J'$ in $\D$ such that $I'$-inj is the class of trivial fibrations and $J'$-inj the class of fibrations in $\D$, if the domains of $I'$ are small relative to $I'$-cell, and if the domains of $J'$ are small relative to maps in $J'$-cell with cofibrant domain.
\end{defn}

Spitzweck \cite{spitzweck-thesis} referred to this as a $J$-semi model category. It has more structure than the semi-model categories of \cite{fresse-book} (which Spitzweck called $(I,J)$-semi model categories), but less structure than a $J$-semi model category \textit{over $\M$} (where cofibrancy in $\D$ is weakened to cofibrancy in $\M$). Semi-model structures as defined above often arises in practice, as the following proposition demonstrates:

\begin{proposition} \label{prop:build-semi}
Let $\M$ be a cofibrantly generated monoidal model category, $T$ a monad in $\M$ such that $T$ commutes with filtered colimits, and suppose:
\begin{enumerate}
\item[a] The initial $T$-algebra is cofibrant, i.e. has the left lifting property against trivial fibrations of $T$-algebras.
\item[b] The domains of the generating maps $TI$ (resp. $TJ$) are small relative to $TI$-cell (resp. maps in $TJ$-cell with cofibrant domain).
\item[c] Every map in $TJ$-cell whose domain is cofibrant as a $T$-algebra is a weak equivalence in $\M$.
\end{enumerate}
Then $T$-alg inherits a cofibrantly generated semi-model structure from $\M$ where weak equivalences and fibrations are created and reflected by $U:T$-alg$\to \M$, and where cofibrations are defined by lifting.
\end{proposition}

\begin{proof}
We check Definition \ref{defn:semi-model} item by item. $T$-alg is complete because limits are computed in $\M$. $T$-alg is cocomplete because $T$ commutes with filtered colimits, just as in Lemma 2.3 of \cite{SS00}. $U$ preserves fibrations and trivial fibrations by the definition of these classes. The two out of three axiom and the retract axiom for weak equivalences and fibrations follows from the same axioms in $\M$. The retract axiom for cofibrations follows because $TI$-cof is closed under retracts. We thus have (1)-(2) of Definition \ref{defn:semi-model}.

By assumption (b), we can use the small object argument to factor any map $f$ as $p\circ i$ or as $p' \circ i'$, where $i$ is in $TI$-cell, $i'$ is in $TJ$-cell, $p$ is in $TI$-inj, and $p'$ is in $TJ$-inj. By adjointness (as in Lemma 2.3 of \cite{SS00}), $p$ is a trivial fibration, $p'$ is a fibration, and $i$ and $i'$ are cofibrations. The $p\circ i$ factorization is half of (3). If $f$ has cofibrant domain then (c) implies $i'$ is a weak equivalence, hence a trivial cofibration, completing (3).

For the first half of (4), note that cofibrations lift against trivial fibrations by definition. Let $f$ be a trivial cofibration with cofibrant domain. Factor $f=p'\circ i'$ into an element of $TJ$-cell (just shown to be a weak equivalence) followed by a fibration. By the two out of three axiom, $p'$ is a trivial fibration, so $f$ lifts against $p'$, so $f$ is a retract of $i'$ by the retract argument. It follows that $f$ has the left lifting property with respect to fibrations, completing (4).

Lastly, (5) holds by (a) and (6) holds because limits are computed in $\M$, where fibrations and trivial fibrations are preserved under pullback.
\end{proof}

This proposition is related to Theorem 2 in \cite{spitzweck-thesis}, but has fewer conditions and produces a semi-model category rather than a semi-model category over $\M$. We are now ready to prove the Corollary:

\begin{proof}[Proof of Corollary \ref{cor:semi-on-comm-alg}]
We use Proposition \ref{prop:build-semi}, where $T$ is $\Sym$. First, (a) is true because the initial object is $S = \Sym(\emptyset)$, and so is cofibrant in $\CMon(\M)$ by adjointness, since $\emptyset$ is cofibrant in $\M$. Next, (b) is true by the smallness assumption together with the filtration of $\Sym$ given in the proof of Theorem \ref{thm:commMonModel}. We are therefore reduced to checking (c). We begin with the case where $R=S$, so that we are building a semi-model structure on $\CMon(\M)$. The proof of Theorem \ref{thm:commMonModel} does not use the monoid axiom until we have already proven that the pushout of commutative monoids
\begin{align*}
\xymatrix{\Sym(K)\ar[r] \ar[d] & \Sym(L)\ar[d] \\ X\ar[r] & P}
\end{align*}
can be factored into $X=P_0\to P_1\to \dots \to P$ where each $P_{n-1} \to P_n$ is a pushout of $X\otimes f^{\boxprod n}/\Sigma_n$. By the commutative monoid axiom, $f^{\boxprod n}/\Sigma_n$ is a trivial cofibration. Without the monoid axiom it is not clear how to proceed unless $X$ is cofibrant. Every map in $TJ$-cell whose domain is cofibrant is a transfinite composition of maps of the form $X\to P$ above, where $X$ is cofibrant, so we may assume $X$ is cofibrant when verifying Proposition \ref{prop:build-semi} (c). In this case, the map $X\otimes f^{\boxprod n}/\Sigma_n$ has the form $(\emptyset \hookrightarrow X)\boxprod f^{\boxprod n}/\Sigma_n$ and hence is a trivial cofibration by the pushout product axiom. Thus, the pushout $P_{n-1}\to P_n$ must also be a trivial cofibration, and the composite $X\to P$ is a composite of trivial cofibrations and hence a trivial cofibration. 

For the case of a general commutative monoid $R$, observe that $CAlg(R)=R\downarrow \CMon(\M)$. For undercategories, one usually defines a map $f:X\to Y$ (i.e. a commutative triangle under $R$) to be a cofibration, fibration, or weak equivalence precisely when $f$ is a cofibration, fibration, or weak equivalence in $\CMon(\M)$. The semi-model structure we seek on $CAlg(R)$ has the same weak equivalences and fibrations (hence the same cofibrations) as these three classes of maps. Theorem 2.7 of \cite{hirschhorn-under-categories} demonstrates that these classes can be transferred from $\CMon(\M)$ along the adjunction $R\otimes -:\CMon(\M)\leftrightarrows CAlg(R):U$.
Since the semi-model structure on $\CMon(\M)$ is itself transferred from $\M$, this means the semi-model structure we seek on $CAlg(R)$ can be viewed as being transferred along the adjunction $R\otimes \Sym(-):\M \leftrightarrows CAlg(R):U$.

In order to check that this transfer defines a semi-model structure, we use Proposition \ref{prop:build-semi} with $T = R\otimes \Sym(-)$. Hypothesis (a) is true by adjunction, because the initial object $R$ is $T(\emptyset)$ and $\emptyset$ is cofibrant in $\M$. Note that this does not require $R$ to be cofibrant in $\M$, only in $CAlg(R)$. Hypothesis (b) is true by our smallness assumption and the filtration above. Hypothesis (c) is true by an argument analogous to the $R=S$ case above, but now where everything in sight is viewed in $R\downarrow \CMon(\M)$. Namely, we use Proposition 2.4 in \cite{hirschhorn-under-categories} to see that the pushout of $X\gets R\otimes \Sym(K)\to R\otimes \Sym(L)$ in $CAlg(R)$ is simply $P$. We analyze this pushout in the case where $X$ is cofibrant in $\CMon(\M)$ (equivalently, $R\to X$ is cofibrant in $CAlg(R)$), we write the filtration maps $P_{n-1}\to P_n$ as pushouts of trivial cofibrations, and we conclude that $X\to P$ is a trivial cofibration. Since the maps in $TJ$-cell whose domains are cofibrant are transfinite compositions of such maps $X\to P$, we conclude (c) holds and hence that $CAlg(R)$ has a transferred semi-model structure.
\end{proof}

Observe that if one wishes to obtain on $CAlg(R)$ a \textit{semi-model structure over $\M$} in the terminology of \cite{spitzweck-thesis} then one must also assume that $S$ and $R$ are cofibrant so that the initial object in $CAlg(R)$ forgets to a cofibrant object in $\M$. Note that the proof given here is fundamentally different from Theorem 3.3 in \cite{hovey-monoidal} (which required $R$ to be cofibrant), because we do not pass through the category of $R$-modules, and so we do not need to prove $R$-mod is a monoidal model category.

As the filtration given in Appendix \ref{appendix-proof} is related to Harper's filtration for general operads from \cite{harper-operads}, we pause for a moment to compare these two approaches.

\begin{remark} \label{remark-enveloping-computations}
Harper's general machinery describes the map $P_{n-1}\to P_n$ as a pushout
\begin{align*}
\xymatrix{Com_X[n] \otimes_{\Sigma_n} Q_n \ar[r] \ar[d] \po & Com_X[n] \otimes_{\Sigma_n} L^{\otimes n} \ar[d] \\ P_{n-1} \ar[r] & P_n}
\end{align*}

where $Com_X$ is the enveloping operad. One may use Proposition 7.6 in \cite{harper-operads} to write $Com_X[n] = X$ with the trivial $\Sigma_n$ action. Thus, $P_{n-1}\to P_n$ can be written as the pushout of $X\otimes f^{\boxprod n}/\Sigma_n$ and Harper's filtration makes it clear that the commutative monoid axiom is precisely the right hypothesis.

In a similar way, $Ass_X[n] = X^{\otimes n+1} \cdot \Sigma_n$, i.e. the coproduct of $n!$ copies of $X^{\otimes n+1}$ with the free $\Sigma_n$ action. So in that case the $(-\cdot \Sigma_n) \otimes_{\Sigma_n}(-)$ provides a cancellation and Harper's filtration reduces to a pushout of $X^{\otimes n+1}\otimes f^{\boxprod n}$. We see immediately why the monoid axiom is necessary.

Finally, one could realize commutative $R$-algebras as algebras over the operad $Com_R$ and in this case Harper's filtration would be a pushout of a map of the form $(Com_R)_A[n] \otimes_{\Sigma_n} f^{\boxprod n}$ where $A$ is a commutative $R$-algebra. In this case, the formula in Proposition 7.6 yields $(Com_R)_A[n] = R\otimes A$ and so the maps $P_{n-1}\to P_n$ are pushouts of $(R\otimes A) \otimes f^{\boxprod n}/\Sigma_n$. In this way we see that in the presence of the commutative monoid axiom but in the absence of the the monoid axiom we need both $R$ and $A$ to be cofibrant in order to ensure that this map is a trivial cofibration, i.e. to obtain on $CAlg(R)$ a semi-model structure over $\M$. This is the commutative analog of Theorem 3.3 in \cite{hovey-monoidal}, in which cofibrancy of $R$ was required to achieve a semi-model structure on $R$-algebras. There the formula $(Ass_R)_A[n] = R\otimes A \cdot \Sigma_n$ means that the relevant pushout takes the form $R\otimes A \otimes f^{\boxprod n}$ and this makes clear why both $R$ and $A$ must be cofibrant in the absence of the monoid axiom.

\end{remark}

We conclude this section with a remark comparing our approach and results with the approach outlined by Lurie in \cite{DAG3}, in which he proved:

\begin{theorem} \label{thm:Lurie}
Let $\M$ be a left proper, combinatorial, tractable, monoidal model category satisfying the monoid axiom and with a cofibrant unit. Assume further that 

(*) If $h$ is a cofibration then $h^{\boxprod n}$ is a cofibration in the projective model structure on $\M^{\Sigma_n}$ for all $n$. Such maps $h$ are called power cofibrations.

Then $\CMon(\M)$ has a model category structure with weak equivalences and fibrations inherited from $\M$.
\end{theorem}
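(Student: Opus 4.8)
The plan is to deduce Theorem \ref{thm:Lurie} from Theorem \ref{thm:commMonModel} by showing that Lurie's hypotheses are a special case of ours. Concretely, I would prove that the power cofibration hypothesis (*), combined with the pushout product axiom, implies the commutative monoid axiom of Definition \ref{defn:comm-monoid-ax}; after that, Theorem \ref{thm:commMonModel} applied with $R = S$ produces the model structure on $\CMon(\M) = CAlg(S)$ directly, once the remaining hypotheses are checked.

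The first step is to record the key fact that for each $n > 0$, the $\Sigma_n$-coinvariants functor $(-)_{\Sigma_n} \colon \M^{\Sigma_n} \to \M$ is left Quillen when $\M^{\Sigma_n}$ is given the projective model structure (which exists since $\M$ is cofibrantly generated). Indeed, $(-)_{\Sigma_n}$ is left adjoint to the functor $\operatorname{triv} \colon \M \to \M^{\Sigma_n}$ that equips an object with the trivial $\Sigma_n$-action, and $\operatorname{triv}$ preserves fibrations and trivial fibrations because a map in the projective model structure on $\M^{\Sigma_n}$ is a (trivial) fibration precisely when its underlying map in $\M$ is one. I would also note that $h^{\boxprod n}/\Sigma_n$ in the sense of Section \ref{sec:main} is exactly $(h^{\boxprod n})_{\Sigma_n}$, the image under this functor of the $n$-fold pushout power $h^{\boxprod n} \colon Q_n \to L^{\otimes n}$ built from the punctured cube.

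Next I would verify the commutative monoid axiom. Let $h$ be a trivial cofibration in $\M$. By associativity of the pushout product and $n$-fold iteration of the pushout product axiom, $h^{\boxprod n}$ is a trivial cofibration in $\M$; in particular it is a weak equivalence, hence a weak equivalence in the projective model structure on $\M^{\Sigma_n}$, where weak equivalences are detected on underlying objects. Since $h$ is in particular a cofibration, hypothesis (*) gives that $h^{\boxprod n}$ is a projective cofibration in $\M^{\Sigma_n}$; combining, $h^{\boxprod n}$ is a projective trivial cofibration. Applying the left Quillen functor $(-)_{\Sigma_n}$ shows that $h^{\boxprod n}/\Sigma_n$ is a trivial cofibration in $\M$, which is exactly the commutative monoid axiom. (The same argument applied to an arbitrary cofibration $h$, using that (*) handles all cofibrations, even yields the strong commutative monoid axiom.) To finish, I would assemble the remaining hypotheses of Theorem \ref{thm:commMonModel}: a combinatorial model category is cofibrantly generated, and being locally presentable every object is small relative to all maps, so in particular the domains of $I$ and $J$ are small relative to $(I \otimes \M)$-cell and $(J \otimes \M)$-cell; the monoid axiom is assumed; and $\M$ is a monoidal model category by hypothesis. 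Theorem \ref{thm:commMonModel} then gives the desired model structure on $\CMon(\M)$. Note that left properness, tractability, and cofibrancy of the unit, which appear in Lurie's statement, are not needed for this conclusion.

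The only point I expect to require genuine care is the identification of $h^{\boxprod n}/\Sigma_n$ with $(h^{\boxprod n})_{\Sigma_n}$ together with the verification that $h^{\boxprod n}$ is an underlying weak equivalence — that is, that the colimit-of-the-punctured-cube map really is the iterated pushout product $h \boxprod \dots \boxprod h$, so that the pushout product axiom applies $n$ times. This is a standard but slightly fiddly combinatorial check about cubes; everything else in the argument is a formal unwinding of adjunctions and the definitions of the projective model structure and the commutative monoid axiom.
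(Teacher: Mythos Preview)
Your proposal is correct and matches the paper's own account. The paper does not give an independent proof of Theorem~\ref{thm:Lurie} (it is cited as Lurie's result from \cite{DAG3}), but immediately after stating it the paper explains exactly the reduction you carry out: condition~(*) implies the strong commutative monoid axiom because $(-)/\Sigma_n \colon \M^{\Sigma_n} \to \M$ is left Quillen with right adjoint the constant-diagram functor (the paper attributes this observation to Lemma~4.3.28 of \cite{DAG3}), and then Theorem~\ref{thm:commMonModel} applies with the combinatoriality hypothesis supplying the requisite smallness; the paper likewise notes that left properness, tractability, and cofibrancy of the unit are not needed for the conclusion.
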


The difference between this result and Theorem \ref{thm:commMonModel} is that in Theorem \ref{thm:commMonModel} we do not require $\M$ to be left proper, we do not require the unit to be cofibrant, we do not require the model structure to be tractable, we weaken combinatoriality to a much lesser smallness hypothesis, and we weaken (*) to the commutative monoid axiom. We have also discussed how to remove the monoid axiom. Note that Lurie also assumes $\M$ is simplicial, but never uses this assumption. The assumption that the unit is cofibrant is part of what Lurie requires of a monoidal model category. However, the unit is not cofibrant in the positive and positive flat model structures on categories of spectra. For this reason, Theorem \ref{thm:Lurie} cannot apply to such examples as stated, but elements of the proof have been made to apply to the positive flat stable model structure in \cite{luis}. 

We refer to condition (*) as Lurie's hypothesis. It implies the strong commutative monoid axiom as shown in Lemma 4.3.28 of \cite{DAG3}. The key observation is that $(-)/\Sigma_n:\M^{\Sigma_n}\to \M$ is the left adjoint of a Quillen pair where the right adjoint is the constant diagram functor (i.e. endows an object with the trivial $\Sigma_n$ action). Thus, if (*) is satisfied and we apply this map to the projective cofibration $f^{\boxprod n}$ we obtain the strong commutative monoid axiom. However, (*) assumes strictly more than the strong commutative monoid axiom, as evidenced in Section \ref{sec:examples} where we show that simplicial sets and topological spaces satisfy the latter but not the former.

Note that Lurie's Proposition 4.3.21 is slightly more general than what we've stated above in that it only requires that there is \emph{some} combinatorial model structure $\M_V$ on the relative category $\M$, and that $\M_V$ has cofibrations $V$ generated by cofibrations between cofibrant objects and satisfying (*). In this case $\M$ is said to be \textit{freely powered} by $V$. We could also do our work in that level of generality, but choose not to because it seems unnatural to place a hypothesis on a model category which references the existence of some other model category. The point is that this extra generality does not buy us anything because $\M$ and $\M_V$ will be Quillen equivalent by Lurie's Remark 4.3.20.

Lurie does not prove that it is sufficient to check hypothesis (*) on the generating (trivial) cofibrations, but this has been done in \cite{luis}. 

\section{Additional Results} \label{sec:additional-results}

\subsection{Functoriality and Homotopy Invariance}

We turn now to the question of whether or not the passage from $R$ to $CAlg(R)$ is functorial and has good homotopy theoretic properties. Following \cite{SS00}, we provide a condition so that the homotopy theory of commutative $R$-algebras only depends on the weak equivalence type of $R$. Recall that a monoidal model category $\M$ is said to satisfy the property that \textit{cofibrant objects are flat} if for all cofibrant $X$ and all weak equivalences $f$, the map $X\otimes f$ is a weak equivalence. This property can be viewed as a global version of the unit axiom (which is the same statement restricted to the cofibrant replacement map $f:QS\to S$).

\begin{theorem} Suppose $\M$ satisfies the conditions of Theorem \ref{thm:commMonModel}. Then:

\begin{enumerate}

\item The passage from $R$ to $CAlg(R)$ is functorial: given a ring homomorphism $f:R\to T$, restriction and extension of scalars provides a Quillen adjunction between $CAlg(R)$ and $CAlg(T)$.

\item Suppose that cofibrant objects are flat in $CAlg(R)$, i.e. for any cofibrant commutative $R$-algebra $N$, the functor $N\otimes_R -$ preserves weak equivalences of commutative $R$-algebras. Let $f:R\to T$ be a weak equivalence of commutative monoids. Then $f$ induces a Quillen equivalence $CAlg(R) \simeq CAlg(T)$.
\end{enumerate}
\end{theorem}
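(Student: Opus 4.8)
The plan is to handle the two parts in order, building on the fact (from Theorem~\ref{thm:commMonModel}) that $CAlg(R)$, $CAlg(T)$ are cofibrantly generated with generating (trivial) cofibrations $R \otimes \Sym(I)$, $R \otimes \Sym(J)$ (resp.\ $T \otimes \Sym(I)$, $T \otimes \Sym(J)$), and that weak equivalences and fibrations are detected in $\M$.

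For part (1), given $f \colon R \to T$, restriction of scalars $f^* \colon CAlg(T) \to CAlg(R)$ is the obvious forgetful functor (a $T$-algebra is an $R$-algebra via $f$), and extension of scalars is its left adjoint $A \mapsto T \otimes_R A$. First I would recall that under the equivalence $CAlg(R) \simeq (R \downarrow \CMon(\M))$, the functor $f^*$ is just composition with $R \to T$, hence manifestly preserves the underlying objects and maps in $\M$; since fibrations and weak equivalences in $CAlg(R)$ and $CAlg(T)$ are created in $\M$, $f^*$ preserves fibrations and trivial fibrations. Therefore $(T \otimes_R -, f^*)$ is a Quillen adjunction. (One could alternatively check that $T \otimes_R -$ sends the generating (trivial) cofibrations $R \otimes \Sym(I)$ to $T \otimes \Sym(I)$, which are (trivial) cofibrations in $CAlg(T)$.) This part is essentially formal.

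For part (2), assume $f \colon R \to T$ is a weak equivalence of commutative monoids and that $N \otimes_R -$ preserves weak equivalences for every cofibrant left $R$-module $N$. I want to show the Quillen adjunction $(T \otimes_R -, f^*)$ of part (1) is a Quillen equivalence, i.e.\ for every cofibrant $A \in CAlg(R)$ and fibrant $B \in CAlg(T)$, a map $T \otimes_R A \to B$ in $CAlg(T)$ is a weak equivalence iff the adjoint $A \to f^* B$ is a weak equivalence in $CAlg(R)$. Since $f^* B = B$ on underlying objects and all weak equivalences are detected in $\M$, it suffices to show the unit map $A \to f^* (T \otimes_R A) = T \otimes_R A$ is a weak equivalence in $\M$ for every cofibrant commutative $R$-algebra $A$; a standard two-out-of-three argument with the triangle identities then upgrades this to the Quillen equivalence criterion. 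The key step is therefore: \emph{if $A$ is cofibrant in $CAlg(R)$ then $A \to T \otimes_R A$ is a weak equivalence}. To see this, I would first reduce to showing $A$ is cofibrant as a left $R$-module --- this follows because $R \otimes \Sym(-) \colon \M \to CAlg(R)$ factors through $R$-modules, and a cofibrant $R$-algebra is a retract of a transfinite composite of pushouts of maps $R \otimes \Sym(I)$, each of which is (by the filtration in the proof of Theorem~\ref{thm:commMonModel}, or by restriction along $R \otimes - \colon R\text{-Mod} \to CAlg(R)$) a cofibration of $R$-modules; hence $A$ is a cofibrant $R$-module. Then $A \to T \otimes_R A$ is precisely $f$ tensored (over $R$) with the cofibrant $R$-module $A$, i.e.\ $A \otimes_R f$ (up to the canonical isomorphisms $R \otimes_R A \cong A$ and $T \otimes_R A$), which is a weak equivalence by the hypothesis that $A \otimes_R -$ preserves weak equivalences applied to $f \colon R \to T$.

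The main obstacle I anticipate is the reduction ``cofibrant commutative $R$-algebra $\Rightarrow$ cofibrant $R$-module''. The cell structure of a cofibrant object in $CAlg(R)$ is built from cells $R \otimes \Sym(I)$, and one must argue that such cells, and their transfinite composites and retracts, remain cofibrations after forgetting to $R$-modules. The cleanest route is to invoke the filtration $P_{n-1} \to P_n$ from the proof of Theorem~\ref{thm:commMonModel}: each stage is a pushout (in $\M$, hence in $R$-modules after tensoring up) of $X \otimes h^{\boxprod n}/\Sigma_n$ for a (trivial) cofibration $h$, and the pushout-product axiom plus the commutative monoid axiom show these are cofibrations of $R$-modules, so the whole tower is a cofibration of $R$-modules. (This is where one sees the role of $\M \otimes \{\text{cofibrations}\}$ landing in cofibrations.) An alternative, if one is willing to assume $R$ itself is cofibrant in $\M$ and that $\M$ satisfies the strong commutative monoid axiom, is to cite Corollary~\ref{cor:cofibrants-forget} to get that $A$ is cofibrant in $\M$ and then note that a cofibrant object of $\M$ need not be $R$-module-cofibrant unless $R$ is cofibrant --- so the filtration argument is really the robust one and should be carried out explicitly.
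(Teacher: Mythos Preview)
Your approach to part~(1) and the overall strategy for part~(2) --- verify the Quillen equivalence via the derived unit, identify the unit $A\to T\otimes_R A$ with $A\otimes_R f$, and invoke the flatness hypothesis --- matches the paper's proof. The paper cites Corollary~1.3.16(c) of \cite{hovey-book} and then simply asserts that the unit is a weak equivalence ``because cofibrant objects are flat,'' without addressing $R$-module cofibrancy of $A$; you are more careful in isolating this as the crux.

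There is, however, a genuine gap in your filtration argument for the reduction ``cofibrant in $CAlg(R) \Rightarrow$ cofibrant $R$-module.'' The cells attached in building a cofibrant $R$-algebra come from generating \emph{cofibrations} $h\in I$, not trivial cofibrations. The commutative monoid axiom (Definition~\ref{defn:comm-monoid-ax}) only tells you that $h^{\boxprod n}/\Sigma_n$ is a trivial cofibration when $h$ is a \emph{trivial} cofibration; to conclude that $h^{\boxprod n}/\Sigma_n$ is a cofibration for $h$ a plain cofibration you need the \emph{strong} commutative monoid axiom, which is not among the hypotheses of Theorem~\ref{thm:commMonModel}. This is exactly why Proposition~\ref{prop:cofibrations-forget} is stated separately and requires the strong version. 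So your ``cleanest route'' in fact needs the same extra hypothesis as your ``alternative''; neither goes through under the hypotheses as stated. The paper's own proof does not resolve this point either --- it applies the flatness hypothesis as though a cofibrant commutative $R$-algebra were already known to be a cofibrant $R$-module --- so in effect both arguments tacitly rely on the strong commutative monoid axiom (or on reading the hypothesis as directly asserting flatness of cofibrant commutative $R$-algebras).
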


\begin{proof} Let $f:R\to T$ be a ring homomorphism.

\begin{enumerate}

\item The map $f$ makes $T$ into an $R$-module, and provides the extension of scalars functor from $CAlg(R)$ to $CAlg(T)$, i.e. $N \cong R\otimes_R N \to T\otimes_R N$. Because weak equivalences and fibrations are defined in the underlying category, the right adjoint restriction functor preserves (trivial) fibrations. So they form a Quillen pair and the extension functor preserves (trivial) cofibrations. 

\item To check that extension and restriction form a Quillen equivalence in this case, we use Corollary 1.3.16(c) of \cite{hovey-book}. First, note that restriction reflects weak equivalences between fibrant objects because the weak equivalences and fibrations in these two categories are the same. Next, suppose $N$ is a cofibrant commutative $R$-algebra. The map $N \cong R\otimes_R N \to T\otimes_R N$ is a weak equivalence because cofibrant objects are flat. Thus, restriction and extension of scalars form a Quillen equivalence.
\end{enumerate}
\end{proof}

An alternative approach for (2) which avoids the need for cofibrant $R$-modules to be flat is suggested by Theorem 2.4 of \cite{hovey-monoidal} in the non-commutative case. The cost is a collection of cofibrancy hypotheses on the objects in question. Via Remark \ref{remark-enveloping-computations} we may view the generating cofibrations of $CAlg(R)$ as $R\otimes \Sym(I)$ where $I$ is the set of generating cofibrations for $\M$.

\begin{theorem}
Suppose $\M$ has a cofibrant unit, satisfies the strong commutative monoid axiom, and that the domains of the generating cofibrations are cofibrant. Suppose $R$ and $T$ are commutative monoids which are cofibrant in $\M$ and suppose $f:R\to T$ is a weak equivalence. Then extension and restriction of scalars is a Quillen equivalence between $CAlg(R)$ and $CAlg(T)$.
\end{theorem}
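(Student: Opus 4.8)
The plan is to apply Corollary 1.3.16(c) of \cite{hovey-book}, following the pattern of Theorem 2.4 of \cite{hovey-monoidal}. Restriction and extension of scalars form a Quillen adjunction, since (trivial) fibrations in $CAlg(R)$ and $CAlg(T)$ are created in $\M$; for the same reason the restriction functor preserves and reflects all weak equivalences, and in particular reflects weak equivalences between fibrant objects. Hence the adjunction is a Quillen equivalence as soon as its derived unit is a weak equivalence, and since restriction detects weak equivalences this reduces to showing that for every cofibrant commutative $R$-algebra $X$ the unit $\eta_X\colon X\to U(T\otimes_R X)$ is a weak equivalence in $\M$, where $U$ is the forgetful functor and $U(T\otimes_R X)$ is computed as the $R$-module tensor product $T\otimes_R U(X)$.

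The key input is that $f\colon R\to T$ is a weak equivalence between cofibrant objects of $\M$: because the pushout product axiom makes $\otimes$ a left Quillen bifunctor, it preserves weak equivalences between cofibrant objects in each variable, so $f\otimes C$ is a weak equivalence for every cofibrant $C\in\M$. I would then prove $\eta_X$ is a weak equivalence by transfinite induction over the cell structure of $X$. A cofibrant $X$ is a retract of a transfinite composition of pushouts of maps $R\otimes\Sym(i)$ with $i\colon K\to L$ in $I$; since the domains of $I$ are cofibrant, $K$, and hence $L$, is cofibrant. Retracts are formal, and the transfinite compositions reduce to the single–cell case (the cells being cofibrations, so that the colimits compute homotopy colimits). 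For a single cell attachment $A\to B$ with $\eta_A$ a weak equivalence, one invokes the filtration of $A\to B$ from Appendix \ref{appendix-proof} (see Remark \ref{remark-enveloping-computations}): the underlying map in $\M$ is a transfinite composition $A=P_0\to P_1\to\cdots\to B$ with $P_{n-1}\to P_n$ a pushout of $(R\otimes A)\otimes(i^{\boxprod n}/\Sigma_n)$, and the left adjoint $T\otimes_R(-)$ carries this to the analogous filtration $T\otimes_R A=P_0'\to P_1'\to\cdots\to T\otimes_R B$ whose stages are pushouts of $(T\otimes(T\otimes_R A))\otimes(i^{\boxprod n}/\Sigma_n)$. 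On the bottom stage the comparison is $\eta_A$, a weak equivalence by hypothesis; for the step from $n-1$ to $n$ one compares the two pushout squares by the gluing lemma, the comparison maps on the span being the natural map $R\otimes A\to T\otimes(T\otimes_R A)$ tensored with $Q_n/\Sigma_n$ and with $L^{\otimes n}/\Sigma_n$, together with $P_{n-1}\to P_{n-1}'$. The first two factor through $f\otimes(-)$ and $T\otimes\eta_A$ and so are weak equivalences, the third is a weak equivalence by the inner induction, hence $P_n\to P_n'$ is a weak equivalence, and passing to the colimit shows $\eta_B$ is a weak equivalence.

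The main obstacle, as always in this circle of ideas, is carrying out the gluing lemma and the colimit comparisons without assuming left properness. This forces one to know that the objects occurring in the filtrations — $R\otimes A$ and $T\otimes(T\otimes_R A)$, the symmetric powers $L^{\otimes n}/\Sigma_n$ and $K^{\otimes n}/\Sigma_n$, and the objects $Q_n/\Sigma_n$ — are cofibrant in $\M$, and that the maps $i^{\boxprod n}/\Sigma_n$, and hence the filtration stages $P_{n-1}\to P_n$, are cofibrations, so that each square is a homotopy pushout and each filtration a tower of cofibrations and a levelwise weak equivalence of such data descends to the colimit. This is precisely the bookkeeping for which the hypotheses that the unit is cofibrant and that the domains of the generating cofibrations are cofibrant are designed; they should feed a simultaneous induction establishing that the cofibrant commutative $R$-algebras built from the cofibrant object $R$ are themselves cofibrant in $\M$ (the analogue of Corollary \ref{cor:cofibrants-forget}, where such a statement was obtained from a strong commutative monoid axiom), and I expect that verifying this joint induction closes up under the present hypotheses is where most of the effort lies. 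Granting it, the cellular induction above yields $\eta_X$ as a weak equivalence for every cofibrant $X$, and Corollary 1.3.16(c) of \cite{hovey-book} completes the argument.
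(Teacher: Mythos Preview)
Your overall strategy---invoke Corollary 1.3.16(c) of \cite{hovey-book}, reduce to the unit $X\to T\otimes_R X$ being a weak equivalence for every cofibrant $X$, and run a cellular induction over a presentation of $X$---matches the paper exactly. The difference is in how the inductive step is executed. You unpack each cell attachment $A\to B$ via the Appendix~\ref{appendix-proof} filtration $A=P_0\to P_1\to\cdots\to B$ in $\M$ and compare the two towers stage by stage with the gluing lemma. The paper does not descend this far: it applies the Cube Lemma (Lemma~5.2.6 of \cite{hovey-book}) directly to the single cube whose back and front faces are the defining pushout squares (in $CAlg(R)$ and $CAlg(T)$) with corners $R\otimes\Sym(K)$, $R\otimes\Sym(L)$, $M_\alpha$, $M_{\alpha+1}$ and their $T$-analogues. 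All of the cofibrancy bookkeeping you anticipate is thereby packaged into the single input that $\Sym(K)$ and $\Sym(L)$ are cofibrant in $\M$ and $\Sym(K)\to\Sym(L)$ is a cofibration in $\M$; given this, smashing the weak equivalence $R\to T$ between cofibrant objects with $\Sym(K)$ or $\Sym(L)$ (Lemma~1.1.12 of \cite{hovey-book}) supplies the corner comparisons, and one invocation of the Cube Lemma finishes the step. Your route would also work, but it effectively re-derives that $\Sym$-cofibrancy statement from scratch inside the induction rather than quoting it once.

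You are right to flag the cofibrancy issue. The input the paper uses---that $\Sym(K)\to\Sym(L)$ is a cofibration in $\M$ for $K\to L\in I$ with cofibrant domains---is exactly what the results around Proposition~\ref{prop:cofibrations-forget} and its corollaries deliver, and those are stated under the \emph{strong} commutative monoid axiom, not the plain one appearing in the theorem's hypotheses. Your parallel worry (that $i^{\boxprod n}/\Sigma_n$ be a cofibration) is the same requirement in different clothing. The paper's own proof leans on this fact, so the tension you noticed lives in the stated hypotheses rather than in your argument.
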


\begin{proof}

We follow the model of Hovey's proof in \cite{hovey-monoidal}. All that must be shown is that for all cofibrant $R$-modules $M$, $M\to M\otimes_R T$ is a weak equivalence. Because $M$ is cofibrant we may write $M = \colim M_\alpha$ where $M_0=0$ and $M_\alpha \to M_{\alpha+1}$ is a pushout of a map in $R\otimes \Sym(I)$. For concreteness we will let $K\to L$ denote the map in $I$ which is used in this pushout.

We show by transfinite induction that $M_\alpha \to M_\alpha \otimes_R T$ is a weak equivalence for all $\alpha$. The base case is trivial because $M_0=0$. For the successor case, apply the left adjoint $-\otimes_R T$ to the pushout square defining $M_\alpha \to M_{\alpha+1}$ and the result will again be a pushout square. There is also a map from the former pushout square to the latter, induced by the adjunction. We will apply the Cube Lemma (Lemma 5.2.6 in \cite{hovey-book}) to the resulting cube.
\begin{align*}
\xymatrix{R\otimes \Sym(K) \ar[r] \ar[d] & R\otimes \Sym(L)\ar[d] &  & 
T\otimes \Sym(K) \ar[r] \ar[d] & T\otimes \Sym(L) \ar[d] \\ 
M_\alpha \ar[r] & M_{\alpha+1} & & M_\alpha \otimes_R T \ar[r] & M_{\alpha+1} \otimes_R T}
\end{align*}

Here we have canceled $R\otimes_R (-)$ terms in the right-hand square. Because $\M$ has the commutative monoid axiom and a cofibrant unit, the cofibrancy of $K$ and $L$ implies the cofibrancy of $\Sym(K)$ and $\Sym(L)$ in $\M$, by Corollary \ref{cor:cofibrants-forget}. 
Thus, by Lemma 1.1.12 in \cite{hovey-book}, smashing with these objects preserves weak equivalences between cofibrant objects, so when we apply this to the weak equivalence $R\to T$, we see that the maps $\otimes \Sym(K)\to T\otimes \Sym(K)$ and $R\otimes \Sym(L) \to T\otimes \Sym(L)$ above are weak equivalences. Similarly, the map $\Sym(K)\to \Sym(L)$ is a cofibration and so because $R$ and $T$ are cofibrant the horizontal maps across the top are cofibrations (and hence the bottom horizontals as well, because they are pushouts of cofibrations). 

Because all maps $M_{\alpha} \to M_{\alpha+1}$ are cofibrations and because $M_0$ is cofibrant, all $M_\alpha$ are cofibrant. Because extension of scalars is left Quillen, the objects in the second square are cofibrant. The inductive hypothesis tells us that the map on the lower left corner is a weak equivalence. The Cube Lemma then guarantees us that the map on the lower right corner is a weak equivalence.

For the limit ordinal case, assume that $M_\alpha \to M_\alpha \otimes_R T$ is a weak equivalence for all $\alpha < \lambda$. Then we have a map of sequences
\begin{align*}
\xymatrix{M_0 \ar[r] \ar[d] & M_1 \ar[r] \ar[d] & \dots \ar[r] & M_\alpha \ar[r] \ar[d] & \dots \\
M_0 \otimes_R T \ar[r] & M_1 \otimes_R T \ar[r] & \dots \ar[r] & M_\alpha \otimes_R T \ar[r] & \dots}
\end{align*}

where all vertical maps are weak equivalences and all all horizontal maps are cofibrations of cofibrant objects. So Proposition 18.4.1 in \cite{hirschhorn} proves the colimit map $M_\lambda \to M_\lambda \otimes_R T$ is a weak equivalence as well.

\end{proof}

Hovey provides a counterexample which demonstrates that for non-cofibrant $R$ and $T$, and without the hypothesis that cofibrant $R$-modules are flat, it is not true that $R\simeq T$ induces a Quillen equivalence of categories of modules. 

We do not know whether or not Hovey's example can be generalized to the case of algebras rather than modules. We do know that the spaces considered in Hovey's example cannot provide such a counterexample for the question of Quillen equivalence between $CAlg(R)$ and $CAlg(T)$, because commutative monoids in $Top$ are generalized Eilenberg-Mac Lane spaces (as discussed in Example \ref{example:top-fails-rectification}).

The author does not know whether or not it is possible to prove homotopy invariance of $CAlg(R)$ without the hypothesis that cofibrant objects are flat and without having to assume the objects $R$ and $T$ are cofibrant. Note that Corollary 2.4 of \cite{BM07} does not apply here because the operads $Com$, $Com_R$, and $Com_T$ are not $\Sigma$-cofibrant. 

\begin{remark}
The results in this section also hold in the absence of the monoid axiom. By Corollary \ref{cor:semi-on-comm-alg}, categories of commutative algebras form semi-model categories and the output of the theorem is a Quillen equivalence of semi-model categories. To see this one need only note that the monoid axiom is not used in the proof, and that the semi-model category analog of 1.3.16 in \cite{hovey-book} can be found in Section 12.1.8 of \cite{fresse-book}.
\end{remark}

\subsection{Rectification} \label{subsec:rectification}

We turn next to the question of rectification. As discussed in \cite{spitzweck-thesis}, categories of algebras over cofibrant operads inherit model structures whenever the monoid axiom is satisfied. Thus, $E_\infty$-algebras in $\M$ will always inherit a model structure in our set-up. There is a weak equivalence $\phi:E_\infty \to Com$, so it is natural to ask whether or not the pair $(\phi^*,\phi_!)$ forms a Quillen equivalence between $E_\infty$-algebras and $Com$-algebras. If there is, then \textit{rectification} is said to occur.

Observe that rectification does not come for free, even for very nice model categories $\M$, as the following counterexample demonstrates:

\begin{example} \label{example:top-fails-rectification}
Let $\M$ be simplicial sets or topological spaces. We will see in the next section that $\M$ satisfies the strong commutative monoid axiom. The monoid axiom and requisite smallness were verified in \cite{SS00} for simplicial sets, in \cite{hovey-monoidal} for compactly generated spaces, and in \cite{white-topological} for $k$-spaces. Thus, commutative monoids inherit a model structure.

For topological spaces the path connected commutative monoids are weakly equivalent to generalized Eilenberg-Mac Lane spaces, i.e. products of Eilenberg-Mac Lane spaces. 

The existence of spaces like $QS = \Omega^\infty \Sigma^\infty S^0$, which has an $E_\infty$-algebra structure but is not a generalized Eilenberg-Mac Lane space, demonstrates that rectification between $E_\infty$ and $Com$ fails for spaces. 
\end{example}

The rectification results of \cite{BM07} are phrased so as to apply for very general model categories $\M$, including simplicial sets. However, these results do not apply to the example above because $Com$ is not a $\Sigma$-cofibrant operad. If $\M$ satisfies Harper's hypothesis that all symmetric sequences are projectively cofibrant (e.g. if $\M = Ch(k)$ for $k$ a $\Q$-algebra), then $Com$ is $\Sigma$-cofibrant and so rectification holds.

The key property possessed by good monoidal categories of spectra is

(**) For all cofibrant $X$, the map $(E\Sigma_n)_+ \wedge_{\Sigma_n} X^{\wedge n} \to X^{\wedge n}/\Sigma_n$ is a weak equivalence. 

This property is certainly related to the commutative monoid axiom, and it is often used to verify the commutative monoid axiom for positive model structures on symmetric and orthogonal spectra. However, the example above demonstrates that this property is not necessary for strictly commutative monoids to inherit a model structure, and that it cannot be deduced from the commutative monoid axiom. We now record the correct analogue of this property (**) in general model categories. We will assume $\M$ is a $\D$-model category in the sense of Definition 4.2.18 in \cite{hovey-book}, and this allows operads valued in $\D$ to act in $\M$.

\begin{defn}
Let $\M$ be a monoidal model category which is a $\D$-model category. View the unit $S$ of $\D$ as an object in $\D^{\Sigma_n}$ with the trivial $\Sigma_n$ action. Let $q:Q_{\Sigma_n}S\to S$ be cofibrant replacement in the projective model structure on $\D^{\Sigma_n}$. Then $\M$ is said to satisfy the \textit{rectification axiom} with respect to operads valued in $\D$ if for all cofibrant $X$ in $\M$, the natural map $Q_{\Sigma_n} S \otimes_{\Sigma_n} X^{\otimes n} \to X^{\otimes n}/\Sigma_n$ is a weak equivalence.
\end{defn}

A similar property to the rectification axiom, requiring certain homotopy orbits to be weakly equivalent to orbits, appears in the axiomatization of good model structures of spectra given by \cite{goerss-hopkins-moduli-problems}. However, in \cite{goerss-hopkins-moduli-problems}, this condition is equivalent to the condition that all simplicial operads are admissible, and as we have seen that will not be true for general model categories. 

The key consequence of the rectification axiom is that rectification will occur between commutative monoids and algebras over a cofibrant replacement $QCom$ of the $Com$ operad (see Theorem \ref{thm:rect-ax-implies-rect} below). An example of such rectification is the Quillen equivalence between commutative ring spectra and $E_\infty$-algebras in good monoidal model categories of spectra, where $QCom$ can be taken to be the Fulton-Macpherson operad. In general, we work in the setting of $\D$-model categories where $\D$ is a monoidal model category (see \cite{hovey-book}, 4.2.6). A $\D$-operad $O$ has $O(n)$ an object in $\D$ for all $n$. These $\D$-operads have algebras in $\M$, just as simplicial operads have algebras in categories of spectra. There is a semi-model structure on the category of $\D$-operads transferred from the projective model structure on symmetric sequences in $\D$ (\cite{fresse-book}, 12.2.A). We define $QCom$ to be a cofibrant replacement of $Com$ in this semi-model structure. This semi-model structure is often a full model structure if $\D$ satisfies stronger conditions (\cite{BM03} Theorem 3.1), but a semi-model structure suffices for our needs.

\begin{theorem} \label{thm:rect-ax-implies-rect}
Suppose that
\begin{enumerate}
\item $\D$ is a monoidal model category whose unit is cofibrant.
\item $\M$ is a monoidal $\D$-model category satisfying the strong commutative monoid axiom and the monoid axiom.
\item The domains of the generating cofibrations of $\M$ are cofibrant and satisfy the requisite smallness hypotheses so that $Com$-alg and $QCom$-alg may inherit transferred model structures.
\item Either that the unit $S$ of $\M$ is cofibrant, or that $\M$ is the positive (flat) model structure on symmetric or orthgonal spectra.
\item $\M$ satisfies the rectification axiom. 
\end{enumerate} 
Then the cofibrant replacement morphism $\phi:QCom \to Com$ induces a Quillen equivalence between $Com$-alg and $QCom$-alg.
\end{theorem}

Condition (4) above guarantees that cofibrations of commutative monoids with cofibrant source forget to cofibrations in $\M$ (by Proposition \ref{prop:cofibrations-forget} in the former case and Proposition 4.1 in \cite{shipley-positive} in the latter). Similarly, $QCom$-algebras with cofibrant source forget to cofibrations in $\M$. For the setting where $S$ is cofibrant, this follows from Theorem 12.1.4 in \cite{fresse-book} and Proposition 4.3 in \cite{BM03}. For the setting of spectra, we will prove this as part of Corollary \ref{cor:rect-for-spectra} below.

We will see that this theorem implies rectification results for positive (flat) monoidal categories of spectra (where $\D$ is simplicial sets or topological spaces) and for chain complexes. The case of $G$-equivariant spectra (where $\D$ is $G$-equivariant spaces) is more subtle. In that setting, the model structure on $G$-operads (Theorem 3.1 in \cite{BM03}, Theorem 12.2.A in \cite{fresse-book}) is the wrong model structure to correctly encode the mixing of group actions between $G$ and the operad symmetric group actions. As a consequence, $QCom$ is not an $N_\infty$-operad in the sense of \cite{blumberg-hill}, and the rectification axiom is not satisfied. However, in current joint work with Javier Guti\'{e}rrez, the author shows that there is another model structure on $G$-operads in which the cofibrant replacement of $Com$ is a cofibrant $N_\infty$-operad and in which rectification occurs. The rectification axiom for that setting states that for all cofibrant $G$-spectra $X$, the natural map $(E_G \Sigma_n)_+ \wedge_{\Sigma_n} X^{\wedge n} \to X^{\wedge n}/\Sigma_n$ is a weak equivalence of $G$-spectra, where $E_G \Sigma_n$ is the total space of the universal $G$-equivariant principal $\Sigma_n$-bundle. We refer to the existence of this weak equivalence for all cofibrant $X$ as the \textit{equivariant analogue of the rectification axiom}.

Before proving the theorem above, we record a lemma regarding the behavior of weak equivalences under coproduct.

\begin{lemma} \label{lemma:coprod-of-weak-equivs}
Arbitrary weak equivalences between cofibrant objects are closed under coproduct.
\end{lemma}

\begin{proof}
Suppose $\{f_\alpha:A_\alpha \to B_\alpha\}_{\alpha \in S}$ is a set of weak equivalences between cofibrant objects. Form the model category $\prod_{\alpha \in S} \M$ with weak equivalences, cofibrations, and fibrations defined from $\M$. Consider the functor $F$ from this model category to $\M$ which takes $(A_\alpha)$ to $\coprod_{\alpha \in S}A_\alpha$. This functor takes trivial cofibrations between cofibrant objects to trivial cofibrations, since the coproduct of any collection of trivial cofibrations in $\M$ is a trivial cofibration. Hence, by Ken Brown's lemma (Lemma 1.1.12 in \cite{hovey-book}) this functor takes weak equivalences between cofibrant objects to weak equivalences. The map $(f_\alpha)$ in $\prod_{\alpha \in S} \M$ is a weak equivalence between cofibrant objects, so $\coprod f_\alpha$ is a weak equivalence in $\M$.
\end{proof}

\begin{proof}[Proof of Theorem \ref{thm:rect-ax-implies-rect}]
First, the restriction functor $\phi^*$ preserves limits and so is a right adjoint, as can be seen from Theorem 12.5.A in \cite{fresse-book}. The left adjoint is denoted by $\phi_!$. Next, $\phi^*$ commutes with the forgetful functor to $\M$ and so preserves (trivial) fibrations of algebras since these are created in $\M$. Thus, $\phi^*$ is a right Quillen functor. 

To show that this Quillen pair is a Quillen equivalence, we will use that $\phi^*$ reflects weak equivalences. This reduces us to proving that for all cofibrant $X$ in $QCom$-alg, the adjunction unit map $\eta:X\to \phi^* \phi_!(X)$ is a weak equivalence. We carry this out first for the case where $X$ has the form $QCom(A)$ for some cofibrant $A$. In this case, $\phi^* \phi_!(X) = \Sym(A)$ and the map $\eta: QCom(A)\to \Sym(A)$ is induced by $\phi$. This map $\eta$ is a coproduct of maps of the form $\eta_n:QCom(n)\otimes_{\Sigma_n} A^{\otimes n} \to A^{\otimes n}/\Sigma_n$. Since $QCom$ is a cofibrant replacement for $Com$, it is in particular a $\Sigma$-cofibrant replacement by Proposition 4.3 in \cite{BM03} applied to the category of operads in $\D$ (this is where we need the hypothesis that the unit of $\D$ is cofibrant) and so $QCom(n)\cong Q_{\Sigma_n}S$. Thus, $\eta_n$ is a weak equivalence for all $n$ by the rectification axiom. 

Because $Q_{\Sigma_n}S$ is $\Sigma_n$-cofibrant and $A^{\otimes n}$ is cofibrant in $\M$, the domain of $\eta_n$ is a cofibrant object in $\M$ (using Lemma 2.5.2 in \cite{BM06}). The codomain of $\eta_n$ is cofibrant by the commutative monoid axiom on $\M$. Thus, Lemma \ref{lemma:coprod-of-weak-equivs} implies $\eta$ is a weak equivalence as required.

Since every cofibrant object in $QCom$-alg is a retract of a cellular object, it suffices to prove that $\eta_X:X\to \phi^*\phi_!X$ is a weak equivalence for cellular $X$, i.e. those built as transfinite compositions of pushouts of generating cofibrations of $QCom$-alg. Thus, $X\cong \colim X_\alpha$, where each map $i_\alpha:X_\alpha \to X_{\alpha+1}$ in the chain is a pushout
\begin{align*}
\xymatrix{\coprod_\D QCom(A_d) \ar[r] \ar[d] & \coprod_\D QCom(B_d) \ar[d] \\
X_\alpha \ar[r]^{i_\alpha} & X_{\alpha+1}}
\end{align*}

We prove $\eta_X$ is a weak equivalence by transfinite induction. For the base case, $X = S$ is the initial $QCom$-algebra and $\eta_X$ is an isomorphism. Suppose now that $X_\alpha$ is a cellular $QCom$-algebra and $\eta_{X_\alpha}$ is a weak equivalence. Applying $\phi^* \phi_!$ to the diagram above yields
a cube in which the back face is the square above and the front face is the pushout square
\begin{align*}
\xymatrix{\coprod_\D Com(A_d) \ar[r] \ar[d] & \coprod_\D Com(B_d) \ar[d] \\
\phi^*\phi_! X_{\alpha} \ar[r] & \phi^* \phi_! X_{\alpha+1}}
\end{align*}

We will use the cube lemma (in $\M$) to deduce that $\eta_{X_{\alpha+1}}$ is a weak equivalence. First, we have weak equivalences between the elements in the upper left, upper right, and lower left of the two squares, by the case above and the inductive hypothesis. Next, all objects in the cube above are cofibrant in $\M$. For the top face, this is because the domains and codomains of the generating cofibrations for $\M$ are cofibrant, and we have already argued that for such $A_d,B_d$ the objects in the cube above are cofibrant. For the bottom face, this follows from the inductive hypothesis and from hypothesis (4) in Theorem \ref{thm:rect-ax-implies-rect}. For the case where $S$ is cofibrant in $\M$, $X_{\alpha}$ and $X_{\alpha+1}$ are cofibrant because cofibrations with cofibrant source forget to cofibrations in $\M$. For the case where $\M$ is spectra, all horizontal maps are injections and so we can use the cube lemma in the injective model structure. Similarly, the horizontal maps in both squares are cofibrations in $\M$ (resp. injections for the case of spectra), by our hypothesis about cofibrations forgetting. Thus, the cube lemma demonstrates that $\eta_{X_{\alpha+1}}$ is a weak equivalence.

For the limit ordinal case, assume $i_{\gamma}$ is a weak equivalence between cofibrant objects for all $\gamma < \alpha$. We have a map of sequences
\begin{align*}
\xymatrix{
S \ar[r] \ar[d] & X_1 \ar[r] \ar[d] & \dots \ar[r] & X_\gamma \ar[d] \ar[r] & \dots \\
\phi^* \phi_! S \ar[r] & \phi^* \phi_! X_1 \ar[r] & \dots \ar[r] & \phi^* \phi_! X_{\gamma} \ar[r] & \dots}
\end{align*}
All vertical maps are weak equivalences and all horizontal maps are cofibrations between cofibrant objects (resp. injections in the case of spectra). Thus, $i_\alpha$ is a weak equivalence by Proposition 17.9.1 in \cite{hirschhorn}. This completes our proof that $(\phi_!,\phi^*)$ is a Quillen equivalence.
\end{proof}

\begin{corollary} \label{cor:rect-for-spectra}
Let $\M$ be the positive flat stable model structure on symmetric spectra or orthogonal spectra. Let $O$ be an $E_\infty$-operad. Then $O$-alg is Quillen equivalent to $Com$-alg.

Let $\M$ be orthogonal spectra with the positive flat stable model structure. Let $O$ be an $E_\infty$-operad. Then $O$-alg is Quillen equivalent to $Com$-alg.
\end{corollary}

\begin{proof}
For symmetric spectra $\D = sSet$ and for orthogonal spectra $\D=Top$. In each case, the unit of $\D$ is cofibrant. Each of these categories of spectra has domains of the generating cofibrations cofibrant, since generating cofibrations are obtained from $sSet$ and $Top$, where they are maps from spheres into disks. Each of these model categories of spectra satisfies the strong commutative monoid axiom, as will be shown in Section \ref{sec:examples} below. Each satisfies the monoid axiom, as has been shown in \cite{shipley-positive} and \cite{stolz-thesis}, among other places. Each has domains of the generating cofibrations satisfying the requisite smallness hypotheses from (3) in Theorem \ref{thm:rect-ax-implies-rect}. For symmetric spectra this is because all objects are small. For orthogonal spectra this is because the domains are small relative to inclusions and both morphisms of the form $I\otimes \M$ and of the form $QCom(I)$ are closed inclusions (hence transfinite compositions of pushouts of such maps are closed inclusions, see \cite{white-topological}).

Theorem \ref{thm:commMonModel} implies the category of commutative monoids inherits a model structure. Proposition 4.3 in \cite{BM03} implies that $QCom$ is $\Sigma$-cofibrant, since the unit of $\D$ is cofibrant. Theorem 12.1.4 in \cite{fresse-book} implies that $QCom$-alg inherits a transferred semi-model structure from $\M$. We must show that a cofibration of $QCom$-algebras with cofibrant source forgets to a cofibration in $\M$. 

in the former case using , and in the latter case by carrying out the proof of Proposition 4.1 in \cite{shipley-positive} in QCom-alg. The key point is that the free QCom-algebra functor preserves monomorphisms. This is true for the free P-algebra functor for any P, since coproducts and passage to $\Sigma_n$-coinvariants preserve monomorphisms.

Finally, each of these model categories has been shown to satisfy the rectification axiom in existing results in the literature. For the case of symmetric spectra this appears in \cite{shipley-positive}. For orthogonal spectra, this is in \cite{MMSS}. Thus, $QCom$-alg is Quillen equivalent to $Com$-alg. Since the unit of $\D$ is cofibrant, $QCom$ is a $\Sigma$-cofibrant operad weakly equivalent to $Com$, hence weakly equivalent to $O$. Since both $O$ and $QCom$ are $\Sigma$-cofibrant, it follows from \cite{fresse-book} (Theorem 12.5.A) that $O$-algebras are Quillen equivalent to $QCom$-algebras, hence to $Com$-algebras.
\end{proof}

An analogous result is true for equivariant orthogonal spectra, but its proof would take us too far afield. It will be proven in a forthcoming paper with Javier Guti\'{e}rrez. The equivariant analogue of the rectification axiom (mentioned above Lemma \ref{lemma:coprod-of-weak-equivs}) is proven to hold in \cite{kervaire-arxiv}, while rectification with a cofibrant $N_\infty$-operad is proven in the appendix of \cite{blumberg-hill}.

\begin{corollary}
Let $\M$ be the positive stable model structure on symmetric spectra or orthogonal spectra. Let $O$ be an $E_\infty$-operad. Then $O$-alg is Quillen equivalent to $Com$-alg.

Let $\M$ be orthogonal spectra with the positive stable model structure. Let $O$ be an $E_\infty$-operad. Then $O$-alg is Quillen equivalent to $Com$-alg.
\end{corollary}

\begin{proof}
For these examples commutative monoids inherit a model structure, but the strong commutative monoid axiom does not hold. A cofibration of commutative monoids whose domain is cofibrant in $\M$ only forgets to a positive flat cofibration in $\M$ and not a positive cofibration. However, the only place in the proof of Theorem \ref{thm:rect-ax-implies-rect} that we used the strong commutative monoid axiom was the step in which we applied the cube lemma. If we apply the cube lemma in the positive flat stable model structure then we prove that $X\to \phi^* \phi_! X$ is a weak equivalence in the positive flat stable model structure. Thankfully, such maps are precisely the stable equivalences, so $X\to \phi^* \phi_! X$ is a weak equivalence in the positive (non-flat) stable model structure as well.

The rest of the hypotheses of Theorem \ref{thm:rect-ax-implies-rect} are satisfied, as can be seen in \cite{MMSS} and \cite{kervaire-arxiv}. The rectification axiom in the positive (non-flat) stable model structure is implied by the rectification axiom in the positive flat stable model structure, since the weak equivalences agree and every positive cofibrant $X$ is positive flat cofibrant. So the corollary now follows from the proof of Theorem \ref{thm:rect-ax-implies-rect} using the argument of the preceding paragraph to prove that $X\to \phi^* \phi_! X$ is a weak equivalence.
\end{proof}

\begin{corollary}
Let $k$ be a field of characteristic zero. Then the category of commutative differential graded algebras over $k$ is Quillen equivalent to the category of $E_\infty$-algebras.
\end{corollary}

\begin{proof}
The model category $\M=Ch(k)$ satisfies the strong commutative monoid axiom, as will be shown in Section \ref{sec:examples} below. It satisfies the monoid axiom (see \cite{SS00}), has domains of the generating cofibrations cofibrant, and has all objects small (see \cite{hovey-book}), and satisfies the rectification axiom (see \cite{quillen-rational-annals}).
\end{proof}

We pause now to record a proposition about the interplay between the rectification axiom and the commutative monoid axiom which we shall use in Section \ref{sec:examples}.

\begin{prop} \label{prop:rectification-implies-sym}
Suppose $\M$ is a monoidal model category satisfying the rectification axiom.
Then $\Sym^n(-)$ takes trivial cofibrations between cofibrant objects to weak equivalences.
\end{prop}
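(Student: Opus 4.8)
The plan is to use the rectification axiom to replace $\Sym^n(-)$ by the functor $X \mapsto Q_{\Sigma_n}S \otimes_{\Sigma_n} X^{\otimes n}$ up to weak equivalence, and then to check directly that this latter functor carries the given map to a weak equivalence by working in the projective model structure on $\M^{\Sigma_n}$, where homotopical control is available. So let $f\colon X \to Y$ be a trivial cofibration between cofibrant objects. First I would note that $f^{\otimes n}\colon X^{\otimes n}\to Y^{\otimes n}$ is again a trivial cofibration between cofibrant objects of $\M$: write it as the composite of the maps $Y^{\otimes k}\otimes f \otimes X^{\otimes n-1-k}$, each of which is the pushout product of $f$ with the cofibration $\emptyset \to Y^{\otimes k}\otimes X^{\otimes n-1-k}$, and invoke the pushout product axiom. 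In particular $f^{\otimes n}$ is a $\Sigma_n$-equivariant weak equivalence, hence a weak equivalence in the projective model structure on $\M^{\Sigma_n}$.

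The crux is to show that for any object $W$ cofibrant in $\M$, the $\Sigma_n$-object $Q_{\Sigma_n}S\otimes W$ --- on which $\Sigma_n$ acts through $Q_{\Sigma_n}S$, since $W$ carries the trivial action --- is cofibrant in the projective model structure on $\M^{\Sigma_n}$. To see this I would observe that the functor $(-)\otimes W\colon \D^{\Sigma_n}\to \M^{\Sigma_n}$ is cocontinuous (it is left adjoint to $\underline{\hom}_\M(W,-)$) and sends a generating projective cofibration $\Sigma_n\cdot i$ of $\D^{\Sigma_n}$, for $i$ a generating cofibration of $\D$, to $\Sigma_n\cdot(i\otimes W)$, which is a projective cofibration of $\M^{\Sigma_n}$ because $i\otimes W$ is a cofibration of $\M$ by the axioms for a $\D$-model category. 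Since $Q_{\Sigma_n}S$ is a retract of a cell complex built from maps of the form $\Sigma_n\cdot i$, and $\emptyset\otimes W=\emptyset$, it follows that $Q_{\Sigma_n}S\otimes W$ is a retract of a cell complex on projective cofibrations, hence projectively cofibrant. Applying this with $W=X^{\otimes n}$ and with $W=Y^{\otimes n}$, both $Q_{\Sigma_n}S\otimes X^{\otimes n}$ and $Q_{\Sigma_n}S\otimes Y^{\otimes n}$ are projectively cofibrant, and the map $Q_{\Sigma_n}S\otimes f^{\otimes n}$ between them is a weak equivalence in $\M$ (it is $\mathrm{id}\otimes f^{\otimes n}$, and $Q_{\Sigma_n}S$ is cofibrant in $\D$ while $f^{\otimes n}$ is a weak equivalence between cofibrant objects of $\M$), hence a projective weak equivalence.

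To conclude I would use that the orbit functor $(-)/\Sigma_n\colon \M^{\Sigma_n}\to \M$ is left Quillen for the projective model structure (its right adjoint is the trivial-action functor, which evidently preserves fibrations and trivial fibrations), so by Ken Brown's lemma it preserves weak equivalences between cofibrant objects; applied to the map above, this shows $Q_{\Sigma_n}S\otimes_{\Sigma_n}f^{\otimes n}$ is a weak equivalence. Now consider the square, natural in $X$, whose horizontal arrows are the maps of the rectification axiom:
\begin{align*}
\xymatrix{Q_{\Sigma_n}S\otimes_{\Sigma_n}X^{\otimes n} \ar[r] \ar[d] & X^{\otimes n}/\Sigma_n \ar[d]^{\Sym^n(f)} \\ Q_{\Sigma_n}S\otimes_{\Sigma_n}Y^{\otimes n} \ar[r] & Y^{\otimes n}/\Sigma_n}
\end{align*}
Since $X$ and $Y$ are cofibrant, the rectification axiom makes both horizontal maps weak equivalences, and we have just shown the left vertical map is a weak equivalence; two-out-of-three then gives that $\Sym^n(f)$ is a weak equivalence.

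The step I expect to be the main obstacle is the crux above: one must resist the temptation to argue that $X^{\otimes n}$ is projectively cofibrant in $\M^{\Sigma_n}$ --- it is not, in general (that is precisely Lurie's power-cofibration hypothesis, which fails for simplicial sets and spaces) --- and instead use that tensoring with the trivial-action object $W$ converts the free $\Sigma_n$-cells of $Q_{\Sigma_n}S$ into free $\Sigma_n$-cells of $\M^{\Sigma_n}$, keeping careful track throughout of the $\D$-model structure on $\M$ rather than its monoidal structure. I would also note that the argument never uses that $f$ is a cofibration, only that it is a weak equivalence between cofibrant objects.
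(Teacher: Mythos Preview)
Your proof is correct and follows essentially the same route as the paper's: form the commutative square comparing $\Sym^n(f)$ with $Q_{\Sigma_n}S\otimes_{\Sigma_n}f^{\otimes n}$, use the rectification axiom on the horizontals, show the left vertical is a weak equivalence via the projective model structure on $\M^{\Sigma_n}$ and the left Quillen functor $(-)/\Sigma_n$, then apply two-out-of-three. The only difference is in how the left vertical is handled: the paper invokes a pushout-product lemma of Berger--Moerdijk to conclude directly that $(\emptyset\to Q_{\Sigma_n}S)\boxprod f^{\otimes n}$ is a $\Sigma_n$-trivial cofibration, whereas you separate the cofibrancy of $Q_{\Sigma_n}S\otimes W$ (via a cell argument) from the weak-equivalence claim and then apply Ken Brown. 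Your version is slightly more self-contained and, as you note, shows that only ``weak equivalence between cofibrants'' is needed of $f$; the paper's pushout-product step as written uses that $f^{\otimes n}$ is an actual trivial cofibration.
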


\begin{proof}
Let $f:A\to B$ be a trivial cofibration between cofibrant objects. Note that $f^{\otimes n}:(A)^{\otimes n} \to (B)^{\otimes n}$ is a trivial cofibration in $\M$ because it is the composite $A^{\otimes n} \to A^{\otimes n-1}\otimes B \to A^{\otimes n-2}\otimes B^{\otimes 2} \to \dots \to B^{\otimes n}$. This follows by iteratively applying the fact that $A\otimes -$ and $B\otimes -$ are left Quillen functors.

Furthermore, $Q_{\Sigma_n} S$ is $\Sigma_n$-cofibrant and so when we take the pushout product of $A^{\otimes n}\to B^{\otimes n}$ with $\emptyset \to Q_{\Sigma_n}S$ we obtain a $\Sigma_n$-trivial cofibration, e.g. by Lemma 2.5.2 in \cite{BM06}. When we pass to $\Sigma_n$-coinvariants we obtain a trivial cofibration $A^{\otimes n} \otimes_{\Sigma_n} Q_{\Sigma_n}S \to B^{\otimes n} \otimes_{\Sigma_n} Q_{\Sigma_n}S$ because $(-)/\Sigma_n$ is left Quillen. Consider the following commutative square, where the bottom horizontal map is $\Sym^n(f)$, the top horizontal map is the map we have just described, and the vertical maps are induced by $Q_{\Sigma_n}S\to S$ and by passage to $\Sigma_n$-coinvariants:
\begin{align*}
\xymatrix{ Q_{\Sigma_n} \otimes_{\Sigma_n} A^{\otimes n} \ar[r] \ar[d] & Q_{\Sigma_n} \otimes_{\Sigma_n} B^{\otimes n} \ar[d] \\
A^{\otimes m}/\Sigma_m \ar[r] & B^{\otimes m}/\Sigma_m }
\end{align*}

We have shown the top vertical map is a weak equivalence. The vertical maps are weak equivalences by the rectification axiom. Thus, the bottom horizontal map is a weak equivalence by the two-out-of-three property.

\end{proof}

In situations arising from topology, where $\M$ is spectra and $\D$ is spaces, the map $Q_{\Sigma_n} S\to S$ is the cofibrant replacement of the point and so is $E\Sigma_n \to *$ in the unpointed setting and $(E\Sigma_n)_+ \to S^0$ in the pointed setting. This proposition is used in Section \ref{sec:examples} to make sure that a particular Bousfield localization respects the commutative monoid axiom. 

We have not undertaken a general study of when rectification between $Com$ and $E_\infty$ holds. The interested reader is encouraged to consult \cite{gutierrez-vogt}, \cite{sagave-schlichtkrull}, and \cite{dmitri} for more information about rectification for general model categories.

\subsection{Relationship to Bousfield Localization}

We now record a few facts regarding the relationship between the model category axioms we have discussed and (left) Bousfield localization. These results are proven in the author's thesis \cite{white-thesis}, and have appeared in the companion paper \cite{white-localization}. Taken together, the following three results give a list of checkable conditions on a model category $\M$ and a set of maps $C$ so that the Bousfield localization $L_C(\M)$ of $\M$ with respect to $C$ satisfies the necessary hypotheses of Theorem \ref{thm:commMonModel}, i.e. so that one may obtain a model structure on the category of commutative monoids in $L_C(\M)$. It is proven in \cite{white-thesis} that these properties imply that commutative $R$-algebras are preserved by $L_C$. Throughout we assume that the maps in $C$ are cofibrations between cofibrant objects. If they are not, then this can be arranged without loss of generality by taking cofibrant replacements of the maps in $C$ and applying the factorization axiom to obtain cofibrations between cofibrant objects.

\begin{theorem} \label{thm:PPaxiom}
Let $\M$ be a left proper, monoidal model category where cofibrant objects are flat and such that the domains of the generating cofibrations are cofibrant. Let $C$ be a set of maps such that the Bousfield localization $L_C(\M)$ exists. Then $L_C(\M)$ has cofibrant objects flat and satisfies the pushout product axiom if and only if for all domains and codomains $K$ of the generating cofibrations, maps in $C\otimes id_K$ are $C$-local equivalences.

Furthermore, without the hypothesis on the domains of the generating cofibrations, we have:

$L_C(\M)$ has cofibrant objects flat and satisfies the pushout product axiom if and only if for all cofibrant $K$, maps in $C \otimes id_K$ are $C$-local equivalences.
\end{theorem}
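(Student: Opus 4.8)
\emph{Approach.} The plan is to prove the two implications separately, with the bulk of the work in the converse. I will use without further comment that $L_C(\M)$ has the same cofibrations as $\M$, that (as arranged above) the maps in $C$ are cofibrations between cofibrant objects and hence are trivial cofibrations in $L_C(\M)$, that $L_C(\M)$ is again left proper, that $\otimes$ is cocontinuous in each variable so that $(\emptyset\to K)\boxprod c\cong \mathrm{id}_K\otimes c\cong c\otimes \mathrm{id}_K$, and --- for the last step --- that in a left proper model category any pushout square one of whose legs is a cofibration is a homotopy pushout, so that the gluing lemma applies to it.

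\emph{The forward direction.} Suppose $L_C(\M)$ satisfies the pushout product axiom. If $K$ is cofibrant then $\emptyset\to K$ is a cofibration and each $c\in C$ is a trivial cofibration of $L_C(\M)$, so the pushout product axiom makes $(\emptyset\to K)\boxprod c\cong c\otimes \mathrm{id}_K$ a trivial cofibration of $L_C(\M)$, that is, a $C$-local equivalence. Neither flatness nor left properness enters here.

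\emph{The converse: reductions.} Assume that for every cofibrant $K$ the maps in $C\otimes\mathrm{id}_K$ are $C$-local equivalences. I first establish the pushout product axiom for $L_C(\M)$; flatness of cofibrant objects in $L_C(\M)$ will then follow formally. The cofibration half of the axiom is inherited from $\M$, so what remains is that $f\boxprod g$ is a $C$-local equivalence whenever $f$ is a cofibration and $g$ a trivial cofibration of $L_C(\M)$ (the other case follows by symmetry of $\otimes$). The reduction I would run is this: $f\boxprod(-)$ and $(-)\boxprod g$ commute with pushouts, transfinite compositions and retracts, and the cofibrations of $\M$ (resp.\ the trivial cofibrations of $L_C(\M)$) are retracts of cellular maps built from $I$ (resp.\ from a set $J\cup\Lambda(C)$, by the usual construction of left Bousfield localizations, cf.\ \cite{hirschhorn}); hence it suffices to treat $i\boxprod g$ with $i\in I$ and $g\in J\cup\Lambda(C)$. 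For $g\in J$ the map $i\boxprod g$ is already a trivial cofibration of $\M$ by the pushout product axiom for $\M$, hence a $C$-local equivalence. For $g$ a horn on $C$, by the form of these horns together with associativity of $\boxprod$ the map $i\boxprod g$ is again a pushout product $c\boxprod f'$ with $c\in C$ and $f'$ a cofibration of $\M$; one further saturation, now in the variable $f'$, reduces the entire problem to showing that $c\boxprod i$ is a $C$-local equivalence for every $c\in C$ and every $i\in I$.

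\emph{The converse: endgame and flatness; the main obstacle.} Fix $c\colon A\to B$ in $C$ (so $A,B$ are cofibrant and $c$ is a $C$-local trivial cofibration) and $i\colon K\to L$ in $I$, where $K$ and $L$ need not be cofibrant --- and this is exactly the crux, since the hypothesis on $C\otimes\mathrm{id}_{(-)}$ cannot be applied to $K$ or $L$ directly. The plan is to replace $i$: functorially choose a cofibration $u\colon QK\to QL$ between cofibrant objects lying over $i$ through weak equivalences $QK\to K$ and $QL\to L$ of $\M$. The induced map of pushout product arrows $(c\boxprod u)\to(c\boxprod i)$ has codomain component $\mathrm{id}_B\otimes(QL\to L)$ and domain component the comparison map between the two defining pushouts. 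Since $A$ and $B$ are cofibrant and cofibrant objects are flat in $\M$, the maps $\mathrm{id}_A\otimes(QK\to K)$, $\mathrm{id}_A\otimes(QL\to L)$ and $\mathrm{id}_B\otimes(QK\to K)$ are weak equivalences of $\M$; since $\M$ is left proper and each of the two pushout squares has a leg which is a cofibration, both are homotopy pushouts, so the gluing lemma makes the domain component a weak equivalence of $\M$ as well. Thus both components of $(c\boxprod u)\to(c\boxprod i)$ are $C$-local equivalences, and by two-out-of-three it suffices to show $c\boxprod u$ is a $C$-local equivalence --- and now every object in sight is cofibrant. In the pushout square defining $c\boxprod u$, the leg $c\otimes\mathrm{id}_{QK}\colon A\otimes QK\to B\otimes QK$ is a cofibration ($c$ is, and $-\otimes QK$ is left Quillen on $\M$) and a $C$-local equivalence (by hypothesis, $QK$ being cofibrant), hence its pushout $A\otimes QL\to Z$ along $\mathrm{id}_A\otimes u$ is a $C$-local trivial cofibration; the composite $A\otimes QL\to Z\to B\otimes QL$ is $c\otimes\mathrm{id}_{QL}$, again a $C$-local equivalence, so two-out-of-three shows $Z\to B\otimes QL$, i.e.\ $c\boxprod u$, is a $C$-local equivalence. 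This completes the proof of the pushout product axiom for $L_C(\M)$. To deduce flatness, put $f=(\emptyset\to V)$ with $V$ cofibrant in the axiom just proved: then $V\otimes(-)$ carries cofibrations and $C$-local trivial cofibrations to themselves, hence preserves $C$-local equivalences between cofibrant objects by Ken Brown's lemma, and combining this with flatness in $\M$ and a cofibrant-replacement plus two-out-of-three argument on an arbitrary $C$-local equivalence $w$ shows $V\otimes w$ is a $C$-local equivalence. The step I expect to be the genuine obstacle is the cofibrant-replacement maneuver in the endgame: it is precisely the absence of tractability that forbids invoking the hypothesis for the domains and codomains of $I$, and rendering the replacement of $K,L$ by $QK,QL$ harmless is exactly what consumes both left properness and the flatness of cofibrant objects in $\M$; a subsidiary technical point is pinning down the form $J\cup\Lambda(C)$ of the generating trivial cofibrations of the localization, on which the saturation scheme rests.
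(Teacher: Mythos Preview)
The paper does not actually prove this theorem: immediately before stating it the author writes that ``these results are proven in the author's thesis \cite{white-thesis}, and will appear soon in the companion paper \cite{white-localization}.'' So there is no in-paper proof to compare against, and what follows is an assessment of your argument on its own terms.

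Your forward direction and your ``endgame'' are correct, and the endgame is exactly the right idea: replacing $i\colon K\to L$ by a cofibration $u\colon QK\to QL$ between cofibrant objects, using flatness of the cofibrant $A,B$ in $\M$ to get weak equivalences on the three corners, and invoking left properness so that both defining pushouts are homotopy pushouts, is precisely how one absorbs the lack of tractability. The subsequent two-out-of-three argument showing $c\boxprod u$ is a $C$-local equivalence is also fine, as is your derivation of flatness in $L_C(\M)$ from the pushout product axiom via Ken Brown plus the $\M$-flatness cofibrant-replacement trick.

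The genuine gap is in the reduction step, and it is more than the ``subsidiary technical point'' you label it. The horns $\Lambda(C)$ in Hirschhorn's construction are formed with the \emph{simplicial} (or framing-based) pushout product $c\mathbin{\hat\square}(\partial\Delta^n\hookrightarrow\Delta^n)$, not with the monoidal $\boxprod$. Your associativity maneuver $i\boxprod(c\boxprod j)\cong c\boxprod(i\boxprod j)$ silently identifies these two pushout products; in a general monoidal model category there is no reason for the simplicial tensoring and the monoidal product to interact this way, so the reduction to $c\boxprod i$ with $i\in I$ does not go through as written. One repair --- and I believe this is closer to what the companion paper does --- is to avoid the generating trivial cofibrations of $L_C(\M)$ altogether and instead first prove flatness directly: assuming $\M$ is closed monoidal, show that for $V$ cofibrant and $Z$ $C$-local fibrant the internal hom $\hom(V,Z)$ is again $C$-local fibrant (this unwinds, via the $\otimes\dashv\hom$ adjunction on mapping spaces, to exactly your hypothesis that $V\otimes c$ is a $C$-local equivalence for $c\in C$), whence $V\otimes(-)$ preserves $C$-local equivalences. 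With flatness in hand, the trivial-cofibration half of the pushout product axiom for $L_C(\M)$ follows for \emph{arbitrary} $C$-local trivial cofibrations $g$ by the same two-out-of-three pushout argument you already give for $c\boxprod u$, after a cofibrant-replacement step as in your endgame; no knowledge of $J_C$ is needed.
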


Note in particular that under these hypotheses $L_C(\M)$ also satisfies the unit axiom. In light of this characterization, we refer to Bousfield localizations satisfying the hypotheses of the theorem as \textit{monoidal Bousfield localizations}. We turn next to the strong commutative monoid axiom, for which we have two preservation results with differing hypotheses. 

\begin{theorem}\label{thm:loc-preserves-cmon-axiom}
Suppose $\M$ is a simplicial model category satisfying the strong commutative monoid axiom. Suppose that for all $n \in \N$ and $f\in C$, $\Sym^n(f)$ is a $C$-local equivalence. Then $L_C(\M)$ satisfies the strong commutative monoid axiom.
\end{theorem}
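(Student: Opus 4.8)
The plan is to combine three ingredients: that a left Bousfield localization changes neither the cofibrations nor the underlying class of objects, the reduction to generating (trivial) cofibrations from Appendix~\ref{appendix-generators}, and the degreewise filtration of symmetric powers from Appendix~\ref{appendix-proof}.

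The cofibration half of the strong commutative monoid axiom for $L_C(\M)$ is immediate: $L_C(\M)$ has the same cofibrations as $\M$, so if $h$ is a cofibration in $L_C(\M)$ then $h^{\boxprod n}/\Sigma_n$ is a cofibration in $\M$ by the strong commutative monoid axiom of $\M$, hence a cofibration in $L_C(\M)$. For the trivial-cofibration half I would apply Appendix~\ref{appendix-generators} to the model category $L_C(\M)$: it suffices to check that $j^{\boxprod n}/\Sigma_n$ is a trivial cofibration of $L_C(\M)$ when $j$ ranges over a generating set of trivial cofibrations, and following \cite{hirschhorn} we may take this set to be $J \cup \{\, c\boxprod i_m \mid c\in C,\ m\ge 0\,\}$, where $i_m\colon \partial\Delta[m]\hookrightarrow\Delta[m]$ and the pushout--product uses the simplicial enrichment of $\M$. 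For $j\in J$ there is nothing to prove: $j$ is a trivial cofibration of $\M$, so $j^{\boxprod n}/\Sigma_n$ is one as well, hence a trivial cofibration of $L_C(\M)$.

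So the content is the case $j=c\boxprod i_m$. Such a $j$ is a cofibration between cofibrant objects of $\M$, and it is a $C$-local equivalence (glue the $C$-local equivalence $c$ along the contractible $\Delta[m]$ using left properness of $L_C(\M)$); moreover $j^{\boxprod n}/\Sigma_n$ is a cofibration of $\M$, hence of $L_C(\M)$, by the strong commutative monoid axiom of $\M$ applied to $j$. It remains to see that $j^{\boxprod n}/\Sigma_n$ is a $C$-local equivalence, and I would prove this by induction on $n$. The structural input is that the filtration of Appendix~\ref{appendix-proof} applied to $\Sym(j)$ produces a factorization $\Sym^n(\dom j)=F_0\to F_1\to\cdots\to F_n=\Sym^n(\codom j)$ of $\Sym^n(j)$ in which the earlier stages $F_{k-1}\to F_k$ (for $k<n$) are pushouts of $\Sym^{n-k}(\dom j)\otimes(j^{\boxprod k}/\Sigma_k)$ and the final stage $F_{n-1}\to F_n$ is $j^{\boxprod n}/\Sigma_n$ itself (the earlier stages build $Q_n(j)/\Sigma_n$ out of $\Sym^n(\dom j)$, and the attaching map of the top cell is an isomorphism). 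By the inductive hypothesis each $j^{\boxprod k}/\Sigma_k$ with $k<n$ is a $C$-local trivial cofibration --- a cofibration by the strong commutative monoid axiom of $\M$ and a $C$-local equivalence by induction; since $\Sym^{n-k}(\dom j)$ is cofibrant in $\M$ (apply the strong commutative monoid axiom to $\emptyset\hookrightarrow\dom j$) and tensoring by it preserves $C$-local equivalences between cofibrant objects, and since pushouts in $L_C(\M)$ along cofibrations preserve $C$-local equivalences, each earlier stage is a $C$-local equivalence, so $F_0\to F_{n-1}$ is a $C$-local equivalence. Granting that $\Sym^n(j)$ is a $C$-local equivalence, two-out-of-three applied to $F_0\to F_{n-1}\to F_n$ then gives that $F_{n-1}\to F_n=j^{\boxprod n}/\Sigma_n$ is a $C$-local equivalence, closing the induction.

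Two facts carry the weight and together form the main obstacle. First, one must know $\Sym^n(c\boxprod i_m)$ is a $C$-local equivalence: for $m=0$ this is exactly the hypothesis (the case $j=c$), and for general $m$ one deduces it from the $m=0$ case using the contractibility of $\Delta[m]$ and the good behaviour of $(-)\otimes K$ for cofibrant simplicial sets $K$ --- this is where the hypothesis that $\M$ is simplicial (hence $L_C(\M)$ simplicial) is essential. Second, one needs that tensoring by the cofibrant objects $\Sym^{n-k}(\dom j)$ carries $C$-local equivalences between cofibrant objects to $C$-local equivalences; this is a flatness property of $L_C(\M)$, which holds under the hypotheses of Theorem~\ref{thm:PPaxiom} that accompany this result in the intended applications, and which is again bound up with the simplicial structure. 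The role of the Appendix~\ref{appendix-proof} filtration is precisely to reduce the symmetric-power operation $(-)^{\boxprod n}/\Sigma_n$ on the mixed generators $c\boxprod i_m$, degree by degree, to lower symmetric powers (handled inductively) together with $\Sym^k$ of elements of $C$ (handled by hypothesis), so that these two facts suffice.
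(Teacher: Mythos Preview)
The paper does not actually prove this theorem: it is recorded in Section~\ref{sec:additional-results} with the remark that the proof appears in \cite{white-thesis} and \cite{white-localization}, so there is no in-paper argument to compare your proposal against.

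On its own merits your outline is sound and tracks the expected strategy. A few remarks. First, the filtration you invoke is really the one from Appendix~\ref{appendix-generators} (equation~(\ref{eq:pushout_defining_equivariant_punctured_cube}) after passing to $\Sigma_n$-coinvariants), not Appendix~\ref{appendix-proof}: the chain $Q^n_0/\Sigma_n\to\cdots\to Q^n_n/\Sigma_n$ factors $\Sym^n(j)$ with the $q$-th step a pushout of $\Sym^{n-q}(\dom j)\otimes j^{\boxprod q}/\Sigma_q$ and the last step equal to $j^{\boxprod n}/\Sigma_n$, exactly as you describe. Second, you correctly identify the two substantive points the stated hypotheses do not literally provide: that $L_C(\M)$ is monoidal (so tensoring a $C$-local trivial cofibration by a cofibrant object remains one), and that $\Sym^n(c\boxprod i_m)$ is a $C$-local equivalence for all $m$, not merely $m=0$. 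The first is exactly what Theorem~\ref{thm:PPaxiom} supplies in the intended applications, and the second is where the simplicial hypothesis earns its keep; your sketch of deducing general $m$ from $m=0$ via contractibility of $\Delta[m]$ is the right idea but would need to be made precise (e.g.\ by comparing $c\boxprod i_m$ to $c\otimes\Delta[m]$ and $c$ via $C$-local equivalences compatible with $\Sym^n$). Modulo these acknowledged gaps, which are genuine but which you flag honestly, the inductive two-out-of-three argument is correct.
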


\begin{theorem}
Assume $\M$ is a monoidal model category satisfying the strong commutative monoid axiom and in which the domains of the generating cofibrations are cofibrant. Suppose that $L_\C(\M)$ is a monoidal Bousfield localization with generating trivial cofibrations $J_\C$. If $\Sym^n(f)$ is a $\C$-local equivalence for all $n \in \N$ and for all $f \in J_\C$, then $L_\C(\M)$ satisfies the strong commutative monoid axiom.
\end{theorem}

Because the results in \cite{white-localization} are general enough to hold only in the presence of a semi-model structure on commutative monoids, it is enough for localization to preserve the pushout product axiom and the commutative monoid axiom. However, we also have a result regarding preservation of the monoid axiom which we record here for the reader's convenience. First we must introduce a new definition, taken from \cite{batanin-berger}: 

\begin{defn} \label{defn:h-monoidal}
A map $f:X\to Y$ is called an \textit{$h$-cofibration} if the functor $f_!:X/\M \to Y/\M$ given by cobase change along $f$ preserves weak equivalences. Formally, this means that in any diagram as below, in which both squares are pushout squares and $w$ is weak equivalence, then $w'$ is also a weak equivalence:
\[
\xymatrix{X \ar[r] \ar[d]_f & A \ar[r]^w \ar[d] & B\ar[d]\\
Y \ar[r] & A' \ar[r]_{w'} & B'}
\]

$\M$ is said to be \textit{$h$-monoidal} if for each (trivial) cofibration $f$ and each object $Z$, $f\otimes Z$ is a (trivial) $h$-cofibration.
\end{defn}

If $\M$ is left proper, then an equivalent characterization of an $h$-cofibration is as a map $f$ such that every pushout along $f$ is a homotopy pushout (this version of the definition above was independently discovered in \cite{white-thesis}). In \cite{batanin-berger}, $h$-monoidality is verified for the model categories of topological spaces, simplicial sets, chain complexes over a field (with the projective model structure), symmetric spectra (with the stable projective model structure), and several other model categories not considered in this paper. More examples can be found in \cite{white-localization}. 

With this definition in hand, it is proven in Proposition 2.5 of \cite{batanin-berger} that if $\M$ is $h$-monoidal and the weak equivalences in $(\M \otimes I)$-cell are closed under transfinite composition, then $\M$ satisfies the monoid axiom. We strengthen this result by replacing the third condition with the hypothesis that the (co)domains of $I$ are finite relative to the class of $h$-cofibrations (here finite means small relative to all limit ordinals, as in Section 7.4 of \cite{hovey-book}). Because this is a statement phrased entirely in terms of $I$, it is preserved by any Bousfield localization $L_C$. We therefore are able to prove:

\begin{theorem}
Suppose $\M$ is an $h$-monoidal model category such that the (co)domains of $I$ are finite relative to the $h$-cofibrations, the domains of the generating cofibrations are cofibrant, and cofibrant objects are flat. Then for any monoidal Bousfield localization $L_C$, the model category $L_C(\M)$ satisfies the monoid axiom.
\end{theorem}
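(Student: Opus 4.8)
The plan is to derive the statement from a strengthening of Proposition 2.5 of \cite{batanin-berger}, and then to check that every hypothesis of that strengthening is preserved by a monoidal Bousfield localization. \emph{Step 1: a strengthened Batanin--Berger criterion.} I would first prove, abstractly, that a left proper, $h$-monoidal model category $\mathcal{N}$ whose generating cofibrations $I$ have (co)domains finite relative to the class of $h$-cofibrations satisfies the monoid axiom. By \cite{batanin-berger} it is enough to show that this finiteness hypothesis implies the one remaining input of their Proposition 2.5: that weak equivalences lying in $(\mathcal{N}\otimes I)$-cell are closed under transfinite composition. The structural facts needed are: (i) $h$-cofibrations are closed under cobase change and transfinite composition; (ii) in a left proper model category the pushout of a trivial $h$-cofibration along an arbitrary map is again a trivial $h$-cofibration — one factors the map being pushed along as a cofibration followed by a trivial fibration, disposes of the cofibration part by left properness (a pushout of a weak equivalence along a cofibration is a weak equivalence) and of the trivial fibration part by the defining property of $h$-cofibrations (cobase change along a trivial $h$-cofibration preserves weak equivalences), then applies two-out-of-three; and (iii) by $h$-monoidality, $f\otimes\mathrm{id}_Z$ is a trivial $h$-cofibration whenever $f$ is a trivial cofibration, so any weak equivalence in $(\mathcal{N}\otimes I)$-cell, and any transfinite composite of such, is a transfinite composite of trivial $h$-cofibrations. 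The remaining point is that such a composite is a weak equivalence, and this is where finiteness enters: when $\mathcal{N}$ is tractable, weak equivalences between fibrant objects are detected by the functors $\Ho(\mathcal{N})(K,-)$ for $K$ the cofibrant domains of $I$, and finiteness of these $K$ relative to $h$-cofibrations forces both a small-object-argument fibrant replacement and these functors to commute with the colimit of the sequence, so that the isomorphisms present at every successor stage pass to the colimit.

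\emph{Step 2: stability under $L_C$.} Now I would apply Step 1 to $\mathcal{N}=L_C(\M)$. Left properness of $L_C(\M)$ is standard for left Bousfield localizations of left proper combinatorial model categories, and $\M$ is tractable, hence combinatorial. For $h$-monoidality: the identity $\mathrm{id}\colon\M\to L_C(\M)$ is left Quillen, hence preserves homotopy pushouts, so — using the characterization (valid in the left proper setting) of $h$-cofibrations as maps along which every pushout is a homotopy pushout — every $h$-cofibration of $\M$ is an $h$-cofibration of $L_C(\M)$; together with $h$-monoidality of $\M$ this covers tensoring cofibrations with objects, and for trivial cofibrations one uses in addition that the localization is \emph{monoidal}, i.e. $L_C(\M)$ satisfies the pushout product axiom and has cofibrant objects flat (Theorem \ref{thm:PPaxiom}), so that tensoring a $C$-local trivial cofibration with any object is again a $C$-local equivalence — reducing if necessary to generating trivial cofibrations with cofibrant (co)domains and to a cofibrant replacement of the other tensor factor. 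Finally, the generating cofibrations $I$ of $L_C(\M)$ are those of $\M$, and the finiteness actually invoked in Step 1 is finiteness relative to the $h$-cofibrations that occur there, namely transfinite composites of pushouts of maps $g\otimes\mathrm{id}_Z$ with $g\in I$; this class is described purely in terms of $I$ and is therefore unchanged by localization, so the finiteness hypothesis transfers verbatim. Step 1 then yields the monoid axiom for $L_C(\M)$.

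\emph{The main obstacle.} The heart of the matter is the final assertion of Step 1(iii): a transfinite composite of trivial $h$-cofibrations is a weak equivalence. Since trivial $h$-cofibrations are not characterized by a lifting property, they are not closed under transfinite composition as a class of weak equivalences for free, and it is precisely the finiteness of the (co)domains of $I$ relative to $h$-cofibrations that makes the required compactness argument go through; pinning down the interplay between this finiteness, the fibrant replacement functor, and the detecting objects is the technical core of the proof. A secondary but genuine point of care is the verification in Step 2 that $h$-monoidality descends to $L_C(\M)$ on trivial cofibrations — this is exactly why the localization is required to be monoidal in the sense of Theorem \ref{thm:PPaxiom} and why cofibrant objects of $\M$ are assumed flat.
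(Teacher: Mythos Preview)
Your proposal follows exactly the two-step strategy the paper outlines: strengthen Proposition~2.5 of \cite{batanin-berger} by replacing the closure-under-transfinite-composition hypothesis with finiteness of the (co)domains of $I$ relative to $h$-cofibrations, then observe that this hypothesis, being phrased in terms of $I$, survives Bousfield localization. The paper itself defers the full proof to \cite{white-thesis} and \cite{white-localization}, giving only this sketch in the text, so there is nothing further to compare against; your identification of the transfinite-composition step as the technical core, and of the descent of $h$-monoidality to $L_C(\M)$ as the secondary subtlety requiring the monoidal hypothesis on $L_C$, matches the paper's emphasis.
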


\subsection{Left Properness}
\label{subsec:left-proper}

In \cite{batanin-berger} conditions are provided so that if $\M$ is left proper then the transferred model structure on algebras over a certain type of monad $T$ is left proper. A standard condition in \cite{batanin-berger}, that subsumes the smallness hypothesis in Theorem \ref{thm:commMonModel}, is that $\M$ is compactly generated, i.e. all objects are small relative to $(\M \otimes I)$-cell and the weak equivalences are closed under filtered colimits along morphisms in $(\M \otimes I)$-cell (i.e. the class of weak equivalences is perfect with respect to $(\M \otimes I)$-cell). Unfortunately, the meaning of compactly generated and of $h$-cofibration in this paper and in \cite{batanin-berger} is totally unrelated to the meaning in \cite{EKMM}. The approach in \cite{EKMM}, via the Cofibration Hypothesis, is meant to avoid the need for the monoid axiom, but does not yield left proper model structures on categories of algebras. 

Theorem 3.1 in \cite{batanin-berger} proves that the model structure on monoids constructed in \cite{SS00} is left proper if $\M$ is compactly generated and if the weak equivalences in $\M$ are closed under $\otimes$ (this condition is referred to as $\M$ being \textit{strongly $h$-monoidal}). Following this proof method, we can prove that our model structure on commutative monoids is left proper under the weaker hypothesis that $\M$ is only $h$-monoidal (in the sense of Definition \ref{defn:h-monoidal}). However, because we still need certain monoidal products of weak equivalences to be weak equivalences, we replace the strong $h$-monoidality by the assumptions that cofibrant objects are flat in $\M$ and that the domains of the generating cofibrations are cofibrant, as in the previous section.

\begin{theorem} \label{thm:left-proper}
Let $\M$ be a compactly generated $h$-monoidal model category satisfying the strong commutative monoid axiom and the monoid axiom. Assume the domains of the generating cofibrations in $\M$ are cofibrant and that cofibrant objects are flat. Let $R$ be a commutative monoid in $\M$. Then the category $CAlg(R)$ inherits a left proper transferred model structure from $\M$. In particular, when $R=S$ this gives a left proper model structure on commutative monoids in $\M$.
\end{theorem}

\begin{proof}
As usual we can reduce to proving the case $R=S$, since $CAlg(R)=R\downarrow CMon(\M)$ and an under-category is left proper if when we forget to $CMon(\M)$ the result is left proper. 
The hypotheses of the theorem subsume those of Theorem \ref{thm:commMonModel}, so we may assume $\CMon(\M)$ admits a transferred model structure. Note also that $\M$ is left proper, since $h$-monoidality implies left properness. 
Following Theorem 2.14 in \cite{batanin-berger}, what must be shown is that for any cofibration $u:K\to L$ in $\M$ and for any weak equivalence $f:A\to B$ in $\CMon(\M)$ with a map $\alpha:K\to U(A)$, the map $A[u,\alpha] \to B[u,f\alpha]$ defined by the following diagram is a weak equivalence:

\begin{align*}
\xymatrix{\Sym(K) \ar[d]_{\Sym(u)} \ar[r]^{\Sym(\alpha)} \po & \Sym(U(A)) \ar[d] \ar[r] \po & A\ar[d] \ar[r]^f \po & B\ar[d] \\
\Sym(L) \ar[r] & \Sym(P) \ar[r] & A[u,\alpha] \ar[r] & B[u,f\alpha]}
\end{align*}

Here $P$ is the pushout of $L\gets K\to U(A)$ in $\M$, the left-hand square is obtained by applying $\Sym$ to this pushout, and the map $\Sym(U(A))\to A$ is the structure map of the monad $\Sym$. The notation $A[u,\alpha]$ and $B[u,f\alpha]$ are defined by the pushout diagrams above.

In order to prove that $A[u,\alpha] \to B[u,f\alpha]$ is a weak equivalence we observe as in Theorem 2.14 of \cite{batanin-berger} that the filtration on each component induces a filtration of the map

\begin{align*}
\xymatrix{A[u]^{(0)} \ar[r] \ar[d] & A[u]^{(1)} \ar[r] \ar[d] & \dots \ar[r] & \colim_n A[u]^{(n)} \ar@{=}[r] \ar[d] & U(A[u,\alpha]) \\
B[u]^{(0)} \ar[r] & B[u]^{(1)} \ar[r] & \dots \ar[r] & \colim_n B[u]^{(n)} \ar@{=}[r] & U(B[u,f\alpha])}
\end{align*}

We have changed notation, but the filtration across the top line is precisely the filtration denoted by $P_0\to P_1\to \dots$ in Theorem \ref{thm:commMonModel} for the diagram $A\gets \Sym(K)\to \Sym(L)$, and the filtration across the bottom is the corresponding filtration for $B$. In particular, the horizontal maps across the top are pushouts of maps of the form $A\otimes u^{\boxprod n}/\Sigma_n$ and the horizontal maps across the bottom are pushouts of maps of the form $B\otimes u^{\boxprod n}/\Sigma_n$. The strong commutative monoid axiom guarantees us that $u^{\boxprod n}/\Sigma_n$ is a cofibration, and $h$-monoidality tells us that the horizontal maps are in the class $(\M \otimes I)$-cell. Since $\M$ is compactly generated, any filtered colimit of weak equivalences along such maps is a weak equivalence, so we are reduced to proving the vertical maps are weak equivalences. This will be accomplished by induction, using the fact that the vertical maps may be realized inductively as colimits of the following cubes:

\begin{align*}
\xymatrix{A\otimes Q_n/\Sigma_n \ar[rr] \ar[dd] \ar[dr] && A \otimes L^{\otimes n}/\Sigma_n \ar[dd] \ar[dr] & \\
& A[u]^{(n-1)} \ar[rr] \ar[dd] && A[u]^{(n)} \ar[dd] \\
B\otimes Q_n/\Sigma_n \ar[rr] \ar[dr] && B \otimes L^{\otimes n}/\Sigma_n \ar[dr] & \\
& B[u]^{(n-1)} \ar[rr] && B[u]^{(n)}}
\end{align*}

Since $f$ is a weak equivalence in $\CMon(\M)$, it is a weak equivalence in $\M$, so $A[u]^{(0)} \to B[u]^{(0)}$ is simply the weak equivalence $f$. By induction we may assume $A[u]^{(n-1)} \to B[u]^{(n-1)}$ is a weak equivalence. We must now prove that the other vertical maps are weak equivalences. This is where we use our hypotheses on $\M$. Our assumption on the domains of the generating cofibrations implies $K$ and $L$ are cofibrant, and hence that the maps inside the cube defining $Q_n$ are cofibrations (hence $h$-cofibrations because $\M$ is left proper). Since passage to $\Sigma_n$-coinvariants commutes with pushout, and because $\M$ satisfies the commutative monoid axiom, $Q_n/\Sigma_n$ and $L^{\otimes n}/\Sigma_n$ are cofibrant. A detailed proof of this claim is given in Lemma \ref{lemma:boxprod-equiv-if-sym}. Because cofibrant objects are flat, the vertical maps $f\otimes Q_n/\Sigma_n$ and $f\otimes L^{\otimes n}/\Sigma_n$ are weak equivalences.

The strong commutative monoid axiom and $h$-monoidality of $\M$ imply that $A\otimes u^{\boxprod n}/\Sigma_n$ and $B\otimes u^{\boxprod n}/\Sigma_n$ are $h$-cofibrations. The characterization of $h$-cofibrations in a left proper model category (given after Definition \ref{defn:h-monoidal} above) implies that both the top and bottom squares of the cube above are homotopy pushout squares. As in the proof of Theorem 3.1 in \cite{batanin-berger} this implies that the back square in the cube above is a homotopy pushout square, and so the front one is too. Finally, this implies that $A[u]^{(n)}\to B[u]^{(n)}$ is a weak equivalence, completing our induction and the proof that $\CMon(\M)$ is left proper.

\end{proof}

As in the proof of Theorem 3.1 in \cite{batanin-berger}, the proof above also has a relative version when $\M$ fails to be $h$-monoidal.

\begin{theorem}
Let $\M$ be a compactly generated monoidal model category satisfying the strong commutative monoid axiom and the monoid axiom. Let $R$ be a commutative monoid in $\M$. Then the category $CAlg(R)$ inherits a relatively left proper transferred model structure from $\M$.
\end{theorem}

Here relatively left proper means that the pushout by a cofibration in $CAlg(R)$ of any weak equivalence $f:A\to B$ where $U(A)$ and $U(B)$ are cofibrant in $\M$ is again a weak equivalence.

\begin{proof}
The hypotheses of the theorem still imply that $\CMon(\M)$ inherits a model structure. The proof proceeds precisely as above, but now one may assume $U(A)$ and $U(B)$ are cofibrant in $\M$, and that $u$ has a cofibrant domain (this is why we do not need a tractability hypothesis on $\M$). So maps of the form $A\otimes u^{\boxprod n}/\Sigma_n$ and $B\otimes u^{\boxprod n}/\Sigma_n$ are cofibrations. Furthermore, all objects of the cube above are cofibrant and we no longer need the hypothesis that cofibrant objects are flat in order to conclude that the vertical maps are weak equivalences. We simply use Ken Brown's lemma, since $-\otimes Z$ will preserve weak equivalences between cofibrant objects for any cofibrant $Z$ (e.g. $Z=Q_n/\Sigma_n$ or $Z=L^{\otimes n}/\Sigma_n$). Finally, the Cube Lemma (Lemma 5.2.6 in \cite{hovey-book}) completes the induction and implies $A[u]^{(n)}\to B[u]^{(n)}$ is a weak equivalence.
\end{proof}

\subsection{Lifting Quillen Equivalences} \label{subsec:liftingQEs}

We turn now to the question of when a monoidal Quillen equivalence $F:\M \to \cat{N}$, between model categories satisfying the commutative monoid axiom, induces a Quillen equivalence of categories of commutative monoids. 

\begin{theorem}
Suppose $\M$ and $\mathcal{N}$ satisfy the commutative monoid axiom. Suppose $F:\M \to \mathcal{N}$ is left Quillen equivalence and a strong symmetric monoidal functor. Let $T$ be a commutative monoid which is cofibrant in $\M$ and such that cofibrant commutative $T$-algebras forget to cofibrant objects in $\M$. Then $F(T)$ is a commutative monoid in $\mathcal{N}$ and the functor $\tilde{F}:$CAlg$(T)\to$ CAlg($F(T)$), induced by $F$, is a left Quillen equivalence.
\end{theorem}

Note that the hypothesis about cofibrant commutative $T$-algebras forgetting to cofibrant objects in $\M$ can be arranged either by assuming the strong commutative monoid axiom and that the unit $S$ of $\M$ is cofibrant (in which case the hypothesis holds by Corollary \ref{cor:cofibrants-forget}), or by working in the setting where $\M$ is a positive flat model structure on a monoidal category of spectra (in which case the hypothesis was proved to hold in \cite{shipley-positive}).

\begin{proof}
First, we lift the functor $F$ to a functor of commutative $T$-algebras. Let $S$ denote the unit of $\M$ and let $S_{\mathcal{N}}$ denote the unit of $\mathcal{N}$. For any commutative $T$-algebra $M$, $FM$ is a commutative $FT$-algebra with structure maps $FT \otimes FM \cong F(T\otimes M) \to FM$, $FM \otimes FM \cong F(M\otimes M)\to FM$, and $S_{\mathcal{N}}\otimes FM \cong FS \otimes FM \cong F(S\otimes M)\to FM$ inherited from $M$. For commutativity of $FM$ we use that $F$ is \textit{symmetric} monoidal.

Let $\eta:X\to UFX$ and $\epsilon:FUX\to X$ be the unit and counit of the adjunction $(F,U)$. We now show that $U$ lifts to a functor of commutative monoids. For any commutative $FT$-algebra $N$, $UN$ is a $T$-module with structure map $T\otimes UN \to UFT \otimes UN \to U(FT\otimes N)\to UN$ where the first map is $\eta \otimes 1$ and the second map is adjoint to $F(UFT \otimes UN)\to FUFT\otimes FUN \to FT\otimes N$, the composite of a natural isomorphism with $\epsilon \otimes \epsilon$. Similarly, $UN$ is a commutative $T$-algebra with structure maps $UN \otimes UN \to U(N\otimes N) \to U(N)$ where the first map is adjoint to $F(UN\otimes UN)\cong FUN \otimes FUN \to N\otimes N$ again using $\epsilon \otimes \epsilon$. Lastly, the unit $S_{\mathcal{N}}\cong FS\to N$ is adjoint to $S\to UN$. 
It is easy to verify that $F$ and $U$ remain adjoint as functors of commutative monoids.

Since $U$ preserves fibrations and trivial fibrations of commutative monoids (since it does so as a functor $\cat{N}\to \M$), it is a right Quillen functor. To prove that $(F,U)$ forms a Quillen equivalence, we must prove that for any cofibrant commutative $T$-algebra $X$, the natural map $X\to UR_TFX$ is a weak equivalence, where $R_T$ is a fibrant replacement functor in the category of commutative $FT$-algebras. Let $R$ be a fibrant replacement functor in $\cat{N}$. Then there is a weak equivalence $RFX \to R_TFX$ because $R_TFX$ is fibrant in $\cat{N}$ and weakly equivalent to $FX$. This is a weak equivalence between fibrant objects in $\cat{N}$, so $URFX \to UR_TFX$ is a weak equivalence in $\M$. Since $X$ is a cofibrant commutative $T$-algebra, our hypothesis guarantees $X$ is cofibrant in $\M$. It follows that $X\to URFX$ is a weak equivalence (since $(F,U)$ is a Quillen equivalence), hence that $X\to UR_TFX$ is a weak equivalence in $\M$.
\end{proof}

Note that because we only needed $X\to URFX$ to be a weak equivalence to finish our proof above, it also works in positive (non flat) model structures on monoidal categories of spectra, since it is sufficient that $X$ be positive flat cofibrant rather than positive cofibrant.

\begin{remark}
The proof above is based on Theorems 2.7 and 3.6 in \cite{hovey-monoidal}. However, the Dold-Kan equivalence is not strongly symmetric monoidal (see page 2 of \cite{schwede-shipley-equivalences}). One could attempt to generalize the Theorem above in the way that \cite{schwede-shipley-equivalences} generalized \cite{hovey-monoidal}, and work with weak monoidal Quillen pairs, following Theorem 3.12 of \cite{schwede-shipley-equivalences}, but this would not solve the fact that the Dold-Kan equivalence is not symmetric.
\end{remark}

\section{Examples}
\label{sec:examples}

In this section we verify the strong commutative monoid axiom for the model categories of chain complexes over a field of characteristic zero, for simplicial sets, for topological spaces, and for positive flat model structures on various categories of spectra. We also discuss precisely what is true for positive (non-flat) model structures of spectra. Throughout this section we make use the following lemma, which is proven in Appendix \ref{appendix-generators}.

\begin{lemma} \label{lemma:generators-suffice}
Suppose $\M$ is a cofibrantly generated monoidal model category and that for all $f \in I$ (resp. $J$) we know that $f^{\boxprod n}/\Sigma_n$ is a (trivial) cofibration. Then the strong commutative monoid axiom holds for $\M$.
\end{lemma}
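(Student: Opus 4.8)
The plan is to reduce an arbitrary (trivial) cofibration $f$ in $\M$ to the generating (trivial) cofibrations via the standard cofibrant-generation machinery: every (trivial) cofibration is a retract of a transfinite composition of pushouts of maps in $I$ (resp. $J$). Thus it suffices to show that the class of maps $g$ for which $g^{\boxprod n}/\Sigma_n$ is a (trivial) cofibration for all $n$ is closed under (i) retracts, (ii) pushouts, and (iii) transfinite composition, since then the hypothesis on $I$ (resp. $J$) propagates to all of $I$-cof (resp. $J$-cof). The key technical tool is the analysis already carried out in the proof of Theorem \ref{thm:commMonModel}: the symmetric pushout-product functor $(-)^{\boxprod n}/\Sigma_n$ does not itself commute with colimits of maps, but it admits a filtration whose layers are built from $\boxprod$-powers of lower complexity together with honest colimits in $\M$. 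Concretely, if $g = h\circ k$ then $g^{\boxprod n}/\Sigma_n$ factors as a composite of pushouts of maps of the form $(h^{\boxprod i}/\Sigma_i)\boxprod(k^{\boxprod j}/\Sigma_j)$ with $i+j=n$ (suitably interpreted via the $\Sigma_i\times\Sigma_j\hookrightarrow\Sigma_n$ induction), and similarly a pushout $g'$ of $g$ has $g'^{\boxprod n}/\Sigma_n$ expressible through pushouts and the lower symmetric powers of $g$; this is precisely the cube/filtration bookkeeping of Section \ref{sec:main} and of Harper's filtration recalled in Remark \ref{remark-enveloping-computations}.

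First I would set up the closure under retracts, which is immediate: $(-)^{\boxprod n}/\Sigma_n$ is a functor on the arrow category, a retract of $g$ yields a retract of $g^{\boxprod n}/\Sigma_n$, and (trivial) cofibrations are closed under retracts. Next I would handle a single pushout $g \rightsquigarrow g'$ along an arbitrary map: using the filtration of the induced map on $n$-th symmetric pushout-powers, each layer is a pushout in $\M$ of a map built by applying $\boxprod$ and $(-)/\Sigma_m$ to copies of $g$ in strictly lower "length," so by induction on $n$ these layers are (trivial) cofibrations; since (trivial) cofibrations are closed under pushout and composition in $\M$, so is $g'^{\boxprod n}/\Sigma_n$. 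Then I would treat a transfinite composition $g = \colim_\beta g_\beta$: here one uses that the relevant filtration is compatible with the transfinite composition, so $g^{\boxprod n}/\Sigma_n$ becomes a transfinite composition of pushouts of the layer maps for the $g_\beta$, again (trivial) cofibrations by the inductive hypothesis and the closure properties in $\M$. Finally, combining these three closure statements with the retract-of-cell characterization of $I$-cof and $J$-cof gives the strong commutative monoid axiom.

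The main obstacle is the combinatorial core of step two and three: making precise the claim that $(h\circ k)^{\boxprod n}/\Sigma_n$, and more generally the symmetric pushout-power of a pushout or a transfinite composite, decomposes into layers involving only the symmetric pushout-powers $h^{\boxprod i}/\Sigma_i$ and $k^{\boxprod j}/\Sigma_j$ of lower order, with the $\Sigma_n$-action correctly tracked through the inductions $\Sigma_i\times\Sigma_{n-i}\hookrightarrow\Sigma_n$. This is exactly the kind of cube-indexed filtration argument that appears in the proof of Theorem \ref{thm:commMonModel} and in \cite{harper-operads}, so I would expect to cite or adapt that filtration rather than rebuild it, and the remaining work is routine verification that the resulting layers are (trivial) cofibrations under the hypothesis on $I$ and $J$ together with the pushout product axiom. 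One subtlety to flag: for the \emph{trivial} case one also needs that the layer maps are weak equivalences, which follows because a pushout product of a trivial cofibration with a cofibration is a trivial cofibration (the pushout product axiom), so it is enough that at least one factor in each layer is a trivial cofibration—true since $f$, hence each $f_\beta$ and each generating map involved, is trivial.
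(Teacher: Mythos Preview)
Your proposal is correct and follows essentially the same route as the paper's proof: define the class $\sP$ (resp.\ $\sP'$) of maps $f$ with $f^{\boxprod n}/\Sigma_n$ a (trivial) cofibration for all $n$, and show it is closed under retracts, pushouts, and transfinite composition, with the composition step handled by the $3^n$-cube filtration whose layers are pushouts of $(f^{\boxprod i}/\Sigma_i)\boxprod(g^{\boxprod j}/\Sigma_j)$ exactly as you describe.

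One minor simplification worth noting: for the pushout step you invoke a filtration and induction on $n$, but the paper observes (via Proposition~6.13 of \cite{harper-spectra}) that if $f$ is a pushout of $g$ then there is already a $\Sigma_n$-equivariant pushout square with $g^{\boxprod n}$ on top and $f^{\boxprod n}$ on the bottom, so after passing to coinvariants $f^{\boxprod n}/\Sigma_n$ is directly a pushout of $g^{\boxprod n}/\Sigma_n$ with no filtration required. Your approach would also work but is more elaborate than necessary here; the heavy cube machinery is genuinely needed only for (transfinite) composition, where the paper's argument matches yours and additionally packages the transfinite step as a projective cofibration in the arrow category $\M^\lambda$ to invoke that $\colim$ is left Quillen.
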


\subsection{Commutative Differential Graded Algebras in characteristic zero} \label{subsec:cdgas}

Consider a field $k$ and $\M = Vect(k)$. Then $\M$ satisfies the strong commutative monoid axiom if and only if char$(k)=0$. 
Because $\M^{\Sigma_n} \cong k[\Sigma_n]-mod$, the projective model structure is nicely behaved (i.e. matches the injective model structure) exactly when $k[\Sigma_n]$ is semisimple, i.e. exactly when $k$ has characteristic zero. Indeed, such $\M$ satisfies the stronger condition required in Theorem \ref{thm:Lurie}. This example generalizes to pertain to $Ch(R)$ whenever $R$ is a commutative $\Q$-algebra.

The commutative monoid axiom fails over $\F_2$ because $\F_2$ is not projective over $\F_2[\Sigma_2]$ (because now Maschke's Theorem does not hold) and so the cokernel of $f^{\boxprod n}$ does not have a free $\Sigma_n$ action, and this will be an obstruction to $f^{\boxprod n}/\Sigma_n$ being a cofibration.

That $CDGA(k)$ cannot inherit a model structure for char$(k)=p>0$ has been known for many years. The fundamental problem is that $\Sym(-)$ does not preserve weak equivalences between cofibrant objects and so cannot be a left Quillen functor. This is because for example $\Sym^p(D(k))$ will not be acyclic even though the disk $D(k)$ is acyclic. 

\subsection{Spaces} \label{subsec:spaces}

\begin{theorem}
The category of simplicial sets satisfies the strong commutative monoid axiom but does not satisfy Lurie's axiom or the rectification axiom.
\end{theorem}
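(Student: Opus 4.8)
The plan is to verify the three claims separately. For the strong commutative monoid axiom, by Lemma \ref{lemma:generators-suffice} it suffices to check the condition on the generating (trivial) cofibrations of simplicial sets, namely the boundary inclusions $\partial\Delta^m \hookrightarrow \Delta^m$ and the horn inclusions $\Lambda^m_k \hookrightarrow \Delta^m$. For such a monomorphism $h$, the map $h^{\boxprod n}$ is the inclusion into $(\Delta^m)^{\times n}$ (or the appropriate product) of the subobject built from the punctured-cube colimit, and $\Sigma_n$ acts by permuting the factors; the key observation is that $h^{\boxprod n}$ is again a monomorphism, and that quotienting by $\Sigma_n$ preserves monomorphisms of simplicial sets. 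This last point is where I would spend the most care: one needs that if $A \hookrightarrow B$ is a $\Sigma_n$-equivariant monomorphism then $A/\Sigma_n \to B/\Sigma_n$ is a monomorphism. In $\mathbf{sSet}$ this can be checked levelwise (colimits and the forgetful functor commute), so it reduces to the corresponding fact about $\Sigma_n$-sets, where it is elementary: a $\Sigma_n$-equivariant injection of $\Sigma_n$-sets induces an injection on orbit sets, since orbits are determined by the equivalence relation generated by the group action and an equivariant injection sends distinct orbits to distinct orbits. Since every monomorphism of simplicial sets is a cofibration, this shows $h^{\boxprod n}/\Sigma_n$ is a cofibration. For the trivial case, one additionally needs that $h^{\boxprod n}/\Sigma_n$ is a weak equivalence when $h$ is a trivial cofibration; here I would either appeal to the fact that $\Sym^n(-) = (-)^{\times n}/\Sigma_n$ sends trivial cofibrations between cofibrant objects to weak equivalences (every simplicial set is cofibrant, and one can reduce to generating trivial cofibrations), or argue directly that the symmetric power of a contractible-up-to-a-retract inclusion stays a weak equivalence using that geometric realization commutes with these quotients and the topological symmetric power of an anodyne-type inclusion is still a deformation. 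I expect the cleanest route is to cite that $\Sym^n$ preserves weak equivalences between cofibrant objects in $\mathbf{sSet}$, a classical fact, combined with the monomorphism argument above.

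For the failure of Lurie's axiom (*), the point is that $h^{\boxprod n}$ need not be a cofibration in the \emph{projective} model structure on $(\mathbf{sSet})^{\Sigma_n}$: projective cofibrations between objects with $\Sigma_n$-action are built from free cells $\Sigma_n \cdot (\text{cofibration})$, so in particular a projectively cofibrant object has a free $\Sigma_n$-action away from the image of the initial object. But if we take $h = (\emptyset \hookrightarrow \Delta^0)$, then $h^{\boxprod n} = (\emptyset \hookrightarrow (\Delta^0)^{\times n}) = (\emptyset \hookrightarrow \Delta^0)$ with the trivial $\Sigma_n$-action on a point, which is visibly not $\Sigma_n$-free and hence not projectively cofibrant for $n \geq 2$. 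So (*) already fails for the unit map. I would also remark that this is consistent with the discussion after Theorem \ref{thm:Lurie}: simplicial sets satisfy the strong commutative monoid axiom but not the strictly stronger condition (*).

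For the failure of the rectification axiom, the relevant $\D$-model structure is $\mathbf{sSet}$ over itself, so $Q_{\Sigma_n}S$ is a cofibrant replacement of the terminal object $\Delta^0$ with trivial $\Sigma_n$-action in $(\mathbf{sSet})^{\Sigma_n}$ with the projective model structure, i.e. a model for $E\Sigma_n$; the rectification axiom would assert that for every simplicial set $X$ the map $E\Sigma_n \times_{\Sigma_n} X^{\times n} \to X^{\times n}/\Sigma_n$, i.e. $(X^{\times n})_{h\Sigma_n} \to (X^{\times n})_{\Sigma_n}$, is a weak equivalence. This fails already for $X = \Delta^0$: the left side is $B\Sigma_n$, which is not contractible for $n \geq 2$, while the right side is a point. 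More conceptually, the failure of the rectification axiom here is the simplicial-set shadow of the failure of rectification between $E_\infty$ and $Com$ for spaces recorded in Example \ref{example:top-fails-rectification}, via geometric realization. The main obstacle in writing this up cleanly is pinning down the monomorphism-preservation argument for $\Sigma_n$-quotients and the statement that $\Sym^n$ preserves weak equivalences between cofibrant objects in $\mathbf{sSet}$ with precise references; both are folklore, so I would cite \cite{hovey-book} or the classical literature rather than reprove them.
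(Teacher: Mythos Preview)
Your proposal is correct and follows essentially the same three-part structure as the paper: monomorphism argument for the cofibration half, a small explicit counterexample for Lurie's axiom, and $X=\Delta^0$ for the rectification axiom. Two minor points of comparison are worth noting. First, your counterexample to Lurie's axiom ($\emptyset \hookrightarrow \Delta^0$, giving a non-free trivial $\Sigma_n$-action on a point) is actually cleaner than the paper's, which instead looks at the cofiber of $f^{\boxprod 2}$ for $f:\partial\Delta^1\to\Delta^1$; either works. Second, and more importantly, the weak-equivalence half of the strong commutative monoid axiom really does rest on the fact that $\Sym^n$ preserves weak equivalences of simplicial sets, and this is not in \cite{hovey-book} nor quite ``classical'' in the sense of being easily citable; the paper invokes a result of Casacuberta \cite{eternal-preprint}, whose proof uses Farjoun's observation that $\Sym^n(X)$ is a homotopy colimit over the orbit category of $\Sigma_n$ of the fixed-point functors $(X^n)^H$. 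You correctly identify this as the main obstacle to a clean write-up, and you should plan to use that reference (or reproduce Farjoun's argument) rather than hope for a textbook citation.
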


\begin{proof}

To see that the rectification axiom fails, consider $X=\Delta[0]$. Then the rectification axiom is asking $B\Sigma_n$ to be contractible. To see that Lurie's axiom fails, consider $f^{\boxprod 2}$ where $f:S^0\to D^1$. This map is not a $\Sigma_2$-cofibration because the action on the cofiber of $f^{\boxprod 2}$ is not free. However, to show that we get a cofibration after passing to $\Sigma_2$ coinvariants is easy, because the map is a monomorphism. Furthermore, this line of reasoning generalizes to show that $f^{\boxprod n}/\Sigma_n$ is a cofibration whenever $f$ is a generating (trivial) cofibration. To check that it's also a weak equivalence if $f$ is a generating trivial cofibration, we use the following theorem of Casacuberta \cite{eternal-preprint}:

\begin{theorem}
If $f$ is any map in sSet, then $Sym(-)$ preserves $f$-equivalences.
\end{theorem}

Obviously, this proves much more than we needed, and in fact we use the proof of this theorem in \cite{white-localization} to see that any monoidal Bousfield localization of sSet also satisfies the strong commutative monoid axiom. 
The key point in the proof of this theorem is due to an observation of Farjoun \cite{farjoun} which says that for any $X$, $Sym^n(X)$ can be written as a homotopy colimit of a free diagram formed by the orbits of $\Sigma_n$ where each quotient $\Sigma_n/H$ is sent to the fixed-point subspace $(X^n)^H$. It is then not too much work to see that $Sym^n(-)$ preserves weak equivalences (and more generally $f$-equivalences), as is proven in \cite{eternal-preprint}. We then use Lemma \ref{lemma:boxprod-equiv-if-sym} to see that $f^{\boxprod n}/\Sigma_n$ is a weak equivalence whenever $f$ is a trivial cofibration, completing our proof that the strong commutative monoid axiom holds.

\end{proof}

Observe that the counterexample displaying the failure of Lurie's axiom and the rectification axiom also applies to $Top$, $sSet^G$, and $Top^G$.

\begin{theorem}
The category of compactly generated topological spaces satisfies the strong commutative monoid axiom.
\end{theorem}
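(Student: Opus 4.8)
The plan is to bootstrap from the simplicial case, which was just handled, using that geometric realization $|-|\colon sSet \to Top$ is a strong symmetric monoidal left Quillen functor. (We cannot simply appeal to Lurie's Theorem~\ref{thm:Lurie} here, since, as noted above, $Top$ fails Lurie's axiom.) First I would invoke Lemma~\ref{lemma:generators-suffice} to reduce the strong commutative monoid axiom for $Top$ to the single statement that $g^{\boxprod n}/\Sigma_n$ is a (trivial) cofibration for every $g$ in some set of generating (trivial) cofibrations. I would then take the standard generating sets for the Quillen model structure on $Top$ to be the realizations $\{|\partial\Delta^k|\hookrightarrow|\Delta^k|\}$ of the generating cofibrations of $sSet$ and $\{|\Lambda^k_j|\hookrightarrow|\Delta^k|\}$ of the generating trivial cofibrations of $sSet$, so that every generator of $Top$ has the form $|h|$ with $h$ a generator of $sSet$.

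Next I would establish the key naturality statement: for every map $h$ in $sSet$ there is a natural isomorphism $|h^{\boxprod n}/\Sigma_n| \cong |h|^{\boxprod n}/\Sigma_n$ in $Top$. This follows because $|-|$ is cocontinuous and strong symmetric monoidal: it carries the punctured $n$-cube diagram (whose colimit is the source $Q_n$ of $h^{\boxprod n}$) to the corresponding diagram built from $|h|$, carries $n$-fold tensor powers to $n$-fold tensor powers, and carries $\Sigma_n$-orbits to $\Sigma_n$-orbits, all compatibly with the $\Sigma_n$-actions. Since $sSet$ satisfies the strong commutative monoid axiom (the theorem just proved), $h^{\boxprod n}/\Sigma_n$ is a (trivial) cofibration in $sSet$, and since $|-|$ is left Quillen it takes this to a (trivial) cofibration in $Top$; by the isomorphism above, $|h|^{\boxprod n}/\Sigma_n$ is a (trivial) cofibration. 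Lemma~\ref{lemma:generators-suffice} then yields the strong commutative monoid axiom for $Top$.

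The main point that needs care --- though it is routine rather than hard --- is the verification that $|-|$ commutes with $(-)^{\boxprod n}$ and $(-)/\Sigma_n$ \emph{simultaneously and $\Sigma_n$-equivariantly}; this uses that $|-|$ is a strong symmetric monoidal left adjoint and relies on working in a convenient (compactly generated) category of spaces, so that realization commutes with the finite products involved. An alternative, self-contained route avoiding the simplicial case would be to argue directly in $Top$: give $D^m$ the CW structure $e^0\cup e^{m-1}\cup e^m$ so that $(D^m)^{\times n}$ becomes a $\Sigma_n$-CW complex (with non-free cells allowed) in which the source $Q_n$ of $g^{\boxprod n}$, consisting of tuples with at least one coordinate in $S^{m-1}$, is a closed $\Sigma_n$-invariant subcomplex; one then uses that the $\Sigma_n$-orbit of a $\Sigma_n$-CW pair is a relative CW inclusion --- hence a cofibration --- with a parallel equivariant-deformation-retract argument handling the generating trivial cofibrations. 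I would present the geometric realization argument as the main line, since it is shorter and mirrors the structure of the simplicial proof above.
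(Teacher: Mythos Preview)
Your proposal is correct and follows essentially the same approach as the paper: the paper's proof simply states that one may ``transport the strong commutative monoid axiom on $sSet$ to $Top$ via the geometric realization functor,'' and your argument spells this out in detail (reduction to generators via Lemma~\ref{lemma:generators-suffice}, then the isomorphism $|h|^{\boxprod n}/\Sigma_n \cong |h^{\boxprod n}/\Sigma_n|$ from strong symmetric monoidality and cocontinuity of $|-|$, then left Quillen). The paper also mentions your alternative direct verification on generators as a ``valuable exercise,'' so both routes you describe are exactly the ones indicated there.
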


\begin{proof}

In $Top$, cofibrations are no longer monomorphisms, but the strong commutative monoid axiom still holds. This may be verified by either checking it directly on the generating maps $S^{n-1}\to D^n$ and $D^n\to D^n \times [0,1]$ (a valuable exercise), or by transporting the strong commutative monoid axiom on $sSet$ to $Top$ via the geometric realization functor. From \cite{hovey-monoidal} we see that $Top$ satisfies the necessary smallness hypotheses, so Theorem \ref{thm:commMonModel} applies.
\end{proof}

In case the reader is interested in checking the commutative monoid axiom on $Top$ directly, we remark that the interpretation of Farjoun's work in \cite{eternal-preprint} makes clear that the only property of simplicial sets being used in the argument is that the fixed point subspaces of actions of subgroups of $\Sigma_k$ on $X^k$ are homeomorphic to spaces $X^n$ for some $n\leq k$. So one could apply Farjoun's work just as well in $Top$ as in $sSet$. Indeed, Farjoun's work provides a way to ``free up" any diagram category and view the colimit of a diagram as the homotopy colimit of a different diagram (indexed by the so-called orbit category). In this way good homotopical properties can be achieved in a great deal of generality. The fact that the same argument works in both $Top$ and $sSet$ leads us to make the following conjecture.

\begin{conjecture}
Suppose that $\M$ is a concretizable Cartesian closed model category in which cofibrations are closed under the operation $(-)^{\boxprod n}/\Sigma_n$. Then the strong commutative monoid axiom holds in $\M$.
\end{conjecture}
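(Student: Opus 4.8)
The plan is to imitate the verification of the strong commutative monoid axiom for $sSet$ and $Top$ in Subsection~\ref{subsec:spaces}, distilling from the hypotheses ``concretizable'' and ``Cartesian closed'' precisely the structural input that is supplied there by the concrete geometry of spaces. By Lemma~\ref{lemma:generators-suffice} it suffices to show that $f^{\boxprod n}/\Sigma_n$ is a (trivial) cofibration for every generating (trivial) cofibration $f$ and every $n>0$. The cofibration half is exactly the standing hypothesis that cofibrations are closed under $(-)^{\boxprod n}/\Sigma_n$, so the entire content is to prove that when $f$ is a generating \emph{trivial} cofibration, $f^{\boxprod n}/\Sigma_n$ is a weak equivalence. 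Two preliminary remarks set the stage. First, Proposition~\ref{prop:rectification-implies-sym} is of no use here, since the rectification axiom fails in the motivating examples, so a genuinely different mechanism is required, and Farjoun's decomposition of symmetric powers is the natural candidate. Second, in the notation of the proof sketch of Theorem~\ref{thm:commMonModel}, $f^{\boxprod n}: Q_n \to L^{\otimes n}$, the map from the colimit $Q_n$ of the punctured $n$-cube of $f$ to $L^{\otimes n}$, is already a trivial cofibration in $\M$ for any trivial cofibration $f$, by iterating the pushout product axiom, and this uses no cofibrancy hypothesis whatsoever; the whole difficulty is therefore concentrated in the descent to $\Sigma_n$-coinvariants.

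The central tool is Farjoun's observation (\cite{farjoun}, in the form used by Casacuberta in \cite{eternal-preprint}): for a subgroup $H \le \Sigma_n$ acting on $[n]$ with $k = k(H)$ orbits, the $H$-fixed part of the $n$-cube of $f$ (whose vertices are the words $C_1 \otimes \cdots \otimes C_n$ with $C_i \in \{K,L\}$) is again a cube, of dimension $k(H)$, built from $K$, $L$, and $f$ by $\otimes$-powers and pushout products. This is where the hypotheses enter: in a concretizable category whose forgetful functor preserves limits, the $H$-fixed object of $X^{\otimes n}$ is the subobject of tuples constant on each $H$-orbit, hence naturally isomorphic to a lower $\otimes$-power of $X$, and these identifications are compatible with the cube's structure maps and with the residual symmetry. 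Consequently $L^{\otimes n}$, the punctured cube, and hence $Q_n$, all admit, $\Sigma_n$-equivariantly, a presentation through their fixed-point data over the orbit category $\mathcal{O}_{\Sigma_n}$, and the resulting diagram is \emph{free} in Farjoun's sense, so that $(-)/\Sigma_n$ applied to each is computed as a (homotopy) colimit over $\mathcal{O}_{\Sigma_n}$ of that fixed-point diagram. This is the precise mechanism by which Cartesian closed model categories ``free up'' the non-free $\Sigma_n$-action on $X^{\otimes n}$.

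Granting this machinery, the argument concludes formally. The $\Sigma_n$-equivariant trivial cofibration $f^{\boxprod n}: Q_n \to L^{\otimes n}$ induces on fixed-point diagrams an objectwise map which, orbit by orbit, is the comparison map of one of the lower cubes above, and is therefore a trivial cofibration by the pushout product axiom, hence a weak equivalence. Since an objectwise weak equivalence of free $\mathcal{O}_{\Sigma_n}$-diagrams induces a weak equivalence on (homotopy) colimits, passing to $\Sigma_n$-coinvariants shows $f^{\boxprod n}/\Sigma_n$ is a weak equivalence. Equivalently, one may first deduce that $\Sym^n = (-)^{\otimes n}/\Sigma_n$ preserves weak equivalences between cofibrant objects (the analogue of Casacuberta's theorem for $sSet$), using the same decomposition, and then run two-out-of-three on the factorization $\Sym^n(K) \to Q_n/\Sigma_n \to \Sym^n(L)$ of $\Sym^n(f)$. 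In either case Lemma~\ref{lemma:generators-suffice} upgrades the conclusion to the strong commutative monoid axiom for all (trivial) cofibrations of $\M$.

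The main obstacle is making ``concretizable Cartesian closed model category'' bear exactly the weight carried in Subsection~\ref{subsec:spaces} by the explicit fixed-point geometry of $Top$ and $sSet$, and three points need to be pinned down. First, concretizability alone need not force products to be computed on underlying sets, so the identification of $(X^{\otimes n})^H$ with a lower $\otimes$-power of $X$ will in general require the forgetful functor to preserve limits (for instance, to be monadic, or at least a right adjoint), and the conjecture may have to be stated with such a strengthening. Second, one must re-establish at this level of generality that Farjoun's free-diagram/orbit-category formalism is valid, and in particular that the $\Sigma_n$-quotient of a fixed-point diagram computes the correct homotopy type; this conceals a genuine limits-versus-colimits subtlety, since fixed points are a limit whereas $Q_n$ is a colimit, and the clean way to license it is presumably to assume a compatible simplicial or framed structure on $\M$ and import the homotopy-colimit formalism of \cite{hirschhorn} or \cite{hovey-book}. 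Third, one should determine what cofibrancy, if any, is required: in $Top$ and $sSet$ the fixed-point identifications and the preservation of weak equivalences by $\Sym^n$ hold without cofibrancy hypotheses, which makes it plausible, though not obvious, that none is needed in general, the alternative being to restrict to tractable $\M$. Settling these three points, above all identifying the precise form of ``concretizable'' that makes the fixed-point computation literally correct, is what separates this sketch from a proof.
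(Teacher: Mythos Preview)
The statement you are attempting to prove is stated in the paper as a \emph{conjecture}, not a theorem, and the paper contains no proof of it. There is therefore no ``paper's own proof'' to compare your proposal against. What the paper does contain is the paragraph immediately preceding the conjecture, which explains the heuristic that led the author to formulate it: Farjoun's trick of rewriting $\Sym^n(X)$ as a homotopy colimit of a free diagram over the orbit category, together with the observation that the only property of spaces used is that the $H$-fixed points of $X^{\otimes k}$ for $H \le \Sigma_k$ are again powers $X^{\otimes n}$ for some $n \le k$.

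Your proposal is precisely an attempt to flesh out this heuristic into a proof, and you have correctly identified the strategy the author had in mind. You are also honest that it is not a proof: the three obstacles you list at the end (whether concretizability forces products to be computed on underlying sets, whether Farjoun's orbit-category formalism carries over to this generality, and what cofibrancy hypotheses are needed) are exactly the reasons this remains a conjecture rather than a theorem. In particular, your first obstacle is the sharpest: ``concretizable'' by itself is far too weak to guarantee the fixed-point identification $(X^{\otimes n})^H \cong X^{\otimes k(H)}$, and the conjecture as stated may well require additional hypotheses on the forgetful functor. So your proposal is not a proof, but it is a fair articulation of why the conjecture is plausible and what would have to be checked to settle it.
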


We now turn to equivariant spaces.

\begin{theorem}
Let $G$ be a finite group. Then $sSet^G$ and $Top^G$ satisfy the strong commutative monoid axiom.
\end{theorem}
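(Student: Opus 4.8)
The plan is to carry both statements out for the genuine (Bredon) equivariant model structures, i.e. the ones in which a map $f$ is a weak equivalence or fibration exactly when the fixed-point map $f^L$ is one in $sSet$ (resp. $Top$) for every subgroup $L\le G$, equipped with the Cartesian monoidal structure (product with the diagonal $G$-action). By Lemma~\ref{lemma:generators-suffice} it suffices to check the axiom on the generating maps. The generating cofibrations of $sSet^G$ are the $G/H\times(\partial\Delta[m]\hookrightarrow\Delta[m])$ and the generating trivial cofibrations are the $G/H\times(\Lambda^k[m]\hookrightarrow\Delta[m])$, with $H$ ranging over subgroups of $G$; for $Top^G$ one has the analogous cells built from $G/H\times(S^{m-1}\hookrightarrow D^m)$. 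So I must show that $f^{\boxprod n}/\Sigma_n$ is a (trivial) cofibration whenever $f$ is one of these.

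For the cofibration half I will use that in $sSet^G$ with the genuine model structure the cofibrations are precisely the monomorphisms: any monomorphism of $G$-simplicial sets is a relative $G$-cell complex, attaching cells $G/H\times\Delta[m]$ indexed by the $G$-orbits of new non-degenerate simplices (with $H$ the isotropy group), and conversely retracts of such are monomorphisms. The forgetful functor $U\colon sSet^G\to sSet$ detects monomorphisms and commutes with $\boxprod$ and with $(-)/\Sigma_n$ (both are built from products and colimits, computed underlying), so $U(f^{\boxprod n}/\Sigma_n)=(Uf)^{\boxprod n}/\Sigma_n$ is a monomorphism by the cofibration part of the strong commutative monoid axiom for $sSet$ established above; hence $f^{\boxprod n}/\Sigma_n$ is a monomorphism in $sSet^G$, i.e. a cofibration. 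For $Top^G$ I instead transport the whole axiom along the geometric realization $|-|\colon sSet^G\to Top^G$, which is $G$-equivariant, strong symmetric monoidal for compactly generated spaces, and a left Quillen functor for the genuine model structures: every generating (trivial) cofibration of $Top^G$ is, up to homeomorphism, $|h|$ for a suitable (trivial) cofibration $h$ of $sSet^G$, and since $|-|$ preserves colimits and the monoidal product, $|h|^{\boxprod n}/\Sigma_n\cong|h^{\boxprod n}/\Sigma_n|$; being left Quillen, $|-|$ sends the (trivial) cofibration $h^{\boxprod n}/\Sigma_n$ of $sSet^G$ to a (trivial) cofibration of $Top^G$. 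Thus $Top^G$ reduces entirely to $sSet^G$, exactly as in the proof of the $Top$ case.

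It remains to show, for $f$ a generating trivial cofibration of $sSet^G$, that $f^{\boxprod n}/\Sigma_n$ is moreover a genuine weak equivalence. Here I run Farjoun's argument \cite{farjoun}, \cite{eternal-preprint} in the equivariant setting; the point emphasised earlier in the excerpt is that the only input Farjoun's analysis needs is that for $K\le\Sigma_n$ the $K$-fixed points of $X^{\times n}$ form a power $X^{\times\ell}$ of $X$, with $\ell$ the number of $K$-orbits of $\{1,\dots,n\}$, and for $X\in sSet^G$ this identification is natural and takes place in $sSet^G$. Consequently $\Sym^n(X)=X^{\times n}/\Sigma_n$, regarded as an object of $sSet^G$, is a homotopy colimit of a projectively cofibrant $\Sigma_n$-orbit diagram whose values are such powers of $X$; since the fixed-point functors $(-)^L$ preserve this class of homotopy colimits (they commute with coproducts, filtered colimits, and pushouts along $G$-cofibrations) and with finite products, one obtains the equivariant analogue of the non-equivariant conclusion: $\Sym^n$ preserves genuine weak equivalences (and more generally genuine $f$-equivalences). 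Feeding this through the punctured-cube filtration of $\Sym^n(f)$ from Appendix~\ref{appendix-proof} — each stage a pushout of $\Sym^{n-j}(A)\times f^{\boxprod j}/\Sigma_j$ along a $G$-cofibration — and arguing downward in $j$ as in the non-equivariant case shows that $f^{\boxprod n}/\Sigma_n$ is a genuine weak equivalence, hence a trivial cofibration; together with the previous paragraph this gives the strong commutative monoid axiom for $sSet^G$, and then for $Top^G$. I expect the one real obstacle to be making this last step fully rigorous — checking that Farjoun's homotopy-colimit presentation of $\Sym^n$ genuinely descends to $sSet^G$ and interacts correctly with every fixed-point functor (the naive guess $(\Sym^n X)^L\cong\Sym^n(X^L)$ is false, so the isotropy bookkeeping really matters); the remaining verifications are formal or are transcriptions of the arguments already given for $sSet$ and $Top$.
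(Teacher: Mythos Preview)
Your proposal is correct and takes essentially the same approach as the paper: for $sSet^G$, use that cofibrations are monomorphisms to handle the cofibration half, invoke Farjoun's homotopy-colimit decomposition of $\Sym^n$ (with the key observation that the $G$- and $\Sigma_n$-actions on $X^{\times n}$ commute) for the trivial-cofibration half, and then transport everything to $Top^G$ via geometric realization. You are more explicit than the paper on two points---invoking Lemma~\ref{lemma:generators-suffice} at the outset, and spelling out the punctured-cube filtration of $\Sym^n(f)$ to pass from ``$\Sym^n$ preserves weak equivalences'' to ``$f^{\boxprod n}/\Sigma_n$ is a weak equivalence''---and you correctly flag as the one genuinely delicate issue the compatibility of Farjoun's presentation with the fixed-point functors $(-)^L$, which the paper handles in a single sentence by pointing to the commutation of the $G$- and $\Sigma_n$-actions.
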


\begin{proof}

We begin with $sSet^G$. Note that just as for $sSet$, cofibrations are monomorphisms.
Thus, the same proof as for $sSet$ applies. In particular, when applying Farjoun's trick on $(X^n)^H$ where $H<\Sigma_n$, we simply use the fact that the $G$ action and the $\Sigma_n$ action commute.

To handle the situation of $Top^G$ we may again transfer the strong commutative monoid axiom via geometric realization. Here we really need $G$ to be a finite group. For any simplicial group $G$, a $G$ action on $X\in sSet$ is taken to an action of $|G|$ on $|X|$ by geometric realization. If $G$ is finite then $G=Sing|G|$ acts on $Sing|X|$ and we can prove $sSet^G$ is Quillen equivalent to $Top^{|G|}$. However, for non-finite $G$ we do not know whether or not every subgroup $K$ of the topological group $|G|$ is realized as some $|H|$ for $H < G$, so there may be fewer weak equivalences in $Top^{|G|}$ than in $sSet^G$.

\end{proof}

\subsection{Symmetric Spectra} \label{subsec:symspec}
The obstruction noticed by Gaunce Lewis and discussed in \cite{lewis} guarantees that commutative monoids in the usual model structure on symmetric spectra cannot inherit a model structure, because the unit is cofibrant and because the fibrant replacement functor is symmetric monoidal. This second property cannot be changed, but there are model structures on symmetric spectra in which the unit is not cofibrant. The positive model structure was introduced in \cite{hovey-shipley-smith} and \cite{MMSS} and this model structure breaks the cofibrancy of the sphere by insisting that cofibrations be isomorphisms in level 0 (though in other levels they are the same as the usual cofibrations of symmetric spectra). In \cite{shipley-positive}, Shipley found a more convenient model structure which is now called the positive flat model structure. In this model structure the cofibrations are enlarged to contain the monomorphisms, and then the condition in level 0 is applied. The result is a model structure in which commutative ring spectra inherit a model structure and in which cofibrations of commutative ring spectra forget to cofibrations of spectra.

Note that in \cite{DAG3}, Lurie's axiom is claimed to hold for positive flat symmetric spectra. This is an error, as acknowledged in \cite{MO-error-Lurie}. Indeed, the example given in Proposition 4.2 of \cite{shipley-positive} demonstrates this failure conclusively, for both the positive and the positive flat model structures. We will now show that the commutative monoid axiom holds for positive flat (stable) symmetric spectra, and a slight weakening holds for positive (stable) symmetric spectra.

\subsubsection{Positive Flat Stable Model Structure} \label{subsubsec:posFlatSymSpec}

\begin{theorem} \label{thm:comm-mon-axiom-symmetric-spectra}
The strong commutative monoid axiom holds for the positive flat stable model structure on symmetric spectra.
\end{theorem}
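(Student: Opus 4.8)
The plan is to reduce the statement, via Lemma \ref{lemma:generators-suffice}, to checking that $f^{\boxprod n}/\Sigma_n$ is a (trivial) cofibration whenever $f$ is a generating (trivial) cofibration of the positive flat stable model structure on symmetric spectra. The generating cofibrations here are built from the generating cofibrations of the flat (S-model) level structure by truncating below level $1$, so the first task is to recall the explicit form of these generators: they are maps of the form $G_m^{\mathrm{fl}} A \to G_m^{\mathrm{fl}} B$ for $m \geq 1$, where $A \to B$ runs over generating cofibrations of pointed $\Sigma_m$-simplicial sets and $G_m^{\mathrm{fl}}$ is the flat shift desuspension functor (left adjoint to the $m$-th level evaluation, with the full $\Sigma_m$ built in). The generating trivial cofibrations factor as such generating cofibrations followed by level/stable acyclic maps, so for the trivial half one must also keep track of weak equivalences.

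First I would handle the non-trivial (cofibration) half. The key computational input is the behaviour of $(-)^{\boxprod n}$ on these generators: there is a standard formula expressing $(G_m^{\mathrm{fl}} A \to G_m^{\mathrm{fl}} B)^{\boxprod n}$ in terms of $G_{mn}^{\mathrm{fl}}$ applied to the $n$-fold pushout-product of $A \to B$ in $\Sigma_m$-spaces, suitably induced up along $\Sigma_m \wr \Sigma_n \hookrightarrow \Sigma_{mn}$. Quotienting by $\Sigma_n$ then gives $G_{mn}^{\mathrm{fl}}$ applied to an object of $\Sigma_{mn}$-spaces that is an \emph{induced} (free-up-to-$\Sigma_m^{\times n}$) cell complex — and this is exactly the feature the ``flat'' structure is designed to accommodate. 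Concretely, I would show that $(A\to B)^{\boxprod n}$ is a cofibration of $\Sigma_m\wr\Sigma_n$-spaces (genuine, with respect to all subgroups), invoke that geometric realization / the level-wise structure turns such a map into a flat cofibration after applying $G_{mn}^{\mathrm{fl}}$ and passing to $\Sigma_n$-orbits, using that the flat cells are precisely $G_k^{\mathrm{fl}}(\Sigma_k/H_+ \wedge (\partial\Delta^j \hookrightarrow \Delta^j))$ and these are closed under the relevant induction and orbit operations. The point is that, unlike the positive non-flat structure, here we never need the $\Sigma_n$-action to be free: an arbitrary isotropy subgroup is allowed.

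Next I would handle the trivial case: if $f$ is a generating trivial cofibration I must show $f^{\boxprod n}/\Sigma_n$ is in addition a stable equivalence. The cofibration part is already done, so by two-out-of-three and the filtration it suffices to show $f^{\boxprod n}/\Sigma_n$ is a level equivalence (or stable equivalence) whenever $f$ is. For this I would use the same wreath-product decomposition together with the fact that on $\Sigma_m\wr\Sigma_n$-spaces the pushout-product of a genuine trivial cofibration with genuine cofibrations is a genuine trivial cofibration (Lemma 2.5.2 of \cite{BM06}, as used in Proposition \ref{prop:rectification-implies-sym}), so that after applying the left Quillen functors $G_{mn}^{\mathrm{fl}}$ and $(-)/\Sigma_n$ we again get a trivial cofibration. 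The subtle point — and the main obstacle — is that ``trivial cofibration'' in the positive flat \emph{stable} structure is not detected levelwise, so one cannot argue purely at the level of $\Sigma_m$-spaces; one must bootstrap from the level-equivalence case to the stable case, e.g. by first treating the generating trivial cofibrations of the positive flat \emph{level} model structure (where the argument above applies verbatim) and then handling the extra generating stable trivial cofibrations $\lambda_m \colon G_{m+1}^{\mathrm{fl}} S^1 \to G_m^{\mathrm{fl}} S^0$ (and their mapping-cylinder factorizations) separately, using that smashing such a $\pi_*$-isomorphism-like map with flat-cofibrant objects preserves stable equivalences — a property of the flat structure recorded in \cite{shipley-positive}. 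I expect the bulk of the work, and the place where the flat (as opposed to merely positive) hypothesis is essential, to be exactly this verification that the orbit construction $(-)^{\boxprod n}/\Sigma_n$ interacts well with stable equivalences on flat-cofibrant inputs; everything else is a formal consequence of Lemma \ref{lemma:generators-suffice} and the Quillen-functor bookkeeping.
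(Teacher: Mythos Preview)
Your overall architecture matches the paper's exactly: reduce to generators via Lemma~\ref{lemma:generators-suffice}, compute $f^{\boxprod n}$ on free spectra by pushing the problem down to $\Sigma$-spaces via $G_m(A)\wedge G_m(B)\cong G_{2m}(A\wedge B)$, and then split the trivial case into level-trivial generators (handled identically to cofibrations) and the extra stable generators coming from the Bousfield localization. Your wreath-product phrasing is a slightly more explicit version of the paper's computation $f^{\boxprod n}\cong G_{nm}(i^{\boxprod n})$; nothing substantive differs there.

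The one place you diverge is the mechanism for the extra stable trivial cofibrations $c_m\boxprod I$. You propose to argue directly that the orbit construction interacts well with stable equivalences on flat-cofibrant inputs, citing flatness properties from \cite{shipley-positive}. The paper does \emph{not} do this: it invokes the rectification axiom (known for positive flat symmetric spectra) through its own Proposition~\ref{prop:rectification-implies-sym} to see that $\Sym^n(s_m)$ is a $\mathcal{C}$-local equivalence, and then applies Theorem~\ref{thm:loc-preserves-cmon-axiom} to conclude that the localization $L_{\mathcal{C}'}(Sp_+)$ inherits the strong commutative monoid axiom. So the paper's argument for the stable half is a black-box appeal to the rectification axiom plus general Bousfield-localization machinery, whereas your sketch tries to work by hand with the specific stable generators. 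Your route is plausible but the step ``$(-)^{\boxprod n}/\Sigma_n$ preserves stable equivalences on flat-cofibrant inputs'' is essentially a restatement of what needs proving and would still require something equivalent to the rectification axiom; the paper's route is cleaner because it packages exactly that difficulty into a named hypothesis already verified in the literature.
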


\begin{proof}
By Lemma \ref{lemma:generators-suffice}, it's sufficient to check the strong commutative monoid axiom on the generating (trivial) cofibrations. Such maps are cofibrations between cofibrant objects, so Lemma 8.3.2(1) of \cite{white-yau} implies that for any generating trivial cofibration $g$ and any generating cofibration $f$, the maps $g^{\boxprod n}/\Sigma_n$ and $f^{\boxprod n}/\Sigma_n$ are monomorphisms for all $n$. Similarly, Lemma 8.3.2(2) of \cite{white-yau} implies that $\Sym^n(g)$ is a weak equivalence for all $n$. These two results are special cases of Proposition 4.28$^*$(b) and Proposition 4.29$^*$(a) in \cite{harper-spectra} (corrigendum). In the notation of \cite{white-yau}, $B$ should be taken to be the monoidal unit $S$ with the trivial $\Sigma_n$-action.

Together with the observations above, Lemma A.3 (applied in the injective model structure on symmetric spectra), implies that $g^{\boxprod n}/\Sigma_n$ is a weak equivalence. We are therefore reduced to proving that $g^{\boxprod n}/\Sigma_n$ and $f^{\boxprod n}/\Sigma_n$ are cofibrations rather than only monomorphisms. In light of Lemma \ref{lemma:generators-suffice}, it suffices to do so for generating cofibrations $f$, as the result will then hold for all cofibrations. Thankfully, this has been painstakingly checked by Pereira in Theorems 1.6 and 1.7 of \cite{pereira-cofibrancy}. Pereira introduces the $S$ $\Sigma$-Inj $G$-Proj model structure on the category (Sp$^\Sigma)^G$ of $G$ objects in symmetric spectra, for any finite group $G$. We need the case where $G = \Sigma_n$, and we note that $S$-model structures are the same as flat model structures. Theorem 1.6 of \cite{pereira-cofibrancy} implies that $f^{\boxprod n}$ is a cofibration in this new model structure. It can be deduced from the proof of Theorem 1.7 in Section 4 of \cite{pereira-cofibrancy} that the functor $(-)/\Sigma_n$ from the $S$ $\Sigma$-Inj $\Sigma_n$-Proj model structure to the positive flat model structure preserves cofibrations (this is the point of the projectivity: to free up the $\Sigma_n$-action). It follows that $(-)^{\boxprod n}/\Sigma_n$ preserves positive flat cofibrations, and hence that the strong commutative monoid axiom is satisfied.
\end{proof}

We remark that this theorem together with Lewis's example demonstrate that the commutative monoid axiom need not be preserved by monoidal Quillen equivalences, since the positive flat stable model structure is monoidally Quillen equivalent to the canonical stable model structure. This can be seen via Proposition 2.8 in \cite{shipley-positive}, together with the fact that stable cofibrations are contained in flat cofibrations (Lemma 2.3 in \cite{shipley-positive}) and the fact that the two model structures have the same weak equivalences. We do not know of a similar example which would demonstrate that the monoid axiom need not be preserved by monoidal Quillen equivalence.

\subsubsection{Positive Stable Model Structure} \label{subsubsec:posSymSpec}

Shipley proves in \cite{shipley-positive} that positive symmetric spectra do not satisfy the property that cofibrations of commutative monoids forget to cofibrations of symmetric spectra. Thus, this model structure cannot satisfy the strong commutative monoid axiom. However, Proposition 4.2 in \cite{shipley-positive} proves that a cofibration of commutative $R$-algebras forgets to a positive $R$-cofibration (and hence to an $R$-cofibration) even though it is not a positive cofibration in the sense of \cite{MMSS}. This suggests the following result:

\begin{prop} \label{prop:stable-to-flat-stable}
Let $f$ be a (trivial) cofibration in the positive stable model structure. Then $f^{\boxprod n}/\Sigma_n$ is a (trivial) cofibration in the positive flat stable model structure. Furthermore, commutative monoids inherit a model structure in the positive stable model structure.
\end{prop}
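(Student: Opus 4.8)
The plan is to leverage the fact that the positive flat stable model structure already satisfies the strong commutative monoid axiom (Theorem \ref{thm:comm-mon-axiom-symmetric-spectra}) and that positive stable cofibrations are a special case of positive flat stable cofibrations. First I would recall from \cite{shipley-positive} that a positive stable cofibration (in the sense of \cite{MMSS}) is in particular a positive flat stable cofibration, and that the two model structures share the same weak equivalences (the stable equivalences of symmetric spectra). Thus, given a (trivial) cofibration $f$ in the positive stable model structure, it is a (trivial) cofibration in the positive flat stable model structure, and Theorem \ref{thm:comm-mon-axiom-symmetric-spectra} immediately tells us that $f^{\boxprod n}/\Sigma_n$ is a (trivial) cofibration in the positive flat stable model structure. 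This is the whole content of the first sentence and requires essentially no new work.

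For the second sentence — that commutative monoids inherit a model structure in the positive stable model structure — I would run the proof of Theorem \ref{thm:commMonModel} (equivalently, verify condition (1) of Lemma \ref{ss00-lemma2.3}) but observe that the filtration argument goes through even though $f^{\boxprod n}/\Sigma_n$ is not a positive stable cofibration, because what we actually need for the monoid axiom step is only that the maps $X\otimes f^{\boxprod n}/\Sigma_n$ are stable equivalences after transfinite composition of pushouts. Since $f^{\boxprod n}/\Sigma_n$ is a trivial cofibration in the positive flat stable structure (when $f$ is a generating trivial cofibration), it is in particular a stable equivalence, and it is a monomorphism (flat cofibrations contain all monomorphisms that are appropriately truncated in level zero, or at any rate flat cofibrations are closed under $(-)^{\boxprod n}/\Sigma_n$ and are stable equivalences when $f$ is). One then invokes the monoid axiom for symmetric spectra — which holds because it is a statement about (trivial-cofibrations $\otimes$ spectra)-cell and is insensitive to whether we use positive or positive flat cofibrations, as long as the maps in question are stable equivalences — together with the requisite smallness (combinatoriality of symmetric spectra). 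This shows $\Sym(J)$-cell consists of stable equivalences, so Lemma \ref{ss00-lemma2.3} applies and produces the model structure on $\CMon(\M)$ for the positive stable structure, hence on $CAlg(R)$ for each commutative monoid $R$ by passing to undercategories as in the proof of Theorem \ref{thm:commMonModel}.

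The main obstacle, and the point requiring care, is the interplay between the two model structures in the monoid-axiom step: we are forming pushouts and transfinite compositions in $\CMon(\M)$ where $\M$ carries the \emph{positive stable} structure, but the maps $f^{\boxprod n}/\Sigma_n$ are only known to be trivial cofibrations in the \emph{positive flat stable} structure. What saves us is that the monoid axiom's conclusion is about weak equivalences, and the two structures have the same weak equivalences; moreover $(\text{positive flat trivial cofibrations}) \otimes \M$ still lands in stable equivalences and is closed under the relevant cell constructions, so the composite $X\to P$ is a stable equivalence regardless of which structure we regard the intermediate cofibrations as living in. I would make this precise by noting that the positive flat stable model structure satisfies the monoid axiom (which follows from $h$-monoidality of symmetric spectra, or directly from \cite{shipley-positive}), and that a transfinite composite of pushouts of maps of the form $X \otimes (\text{positive flat trivial cofibration})$ is a stable equivalence — exactly the statement needed. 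The only genuinely new verification is that the filtration $P_{n-1}\to P_n$ from Appendix \ref{appendix-proof}, which is built from the maps $X\otimes f^{\boxprod n}/\Sigma_n$, still makes sense and still assembles $X\to P$; but that filtration is a purely categorical construction in $\M$ and does not care which cofibrations we privilege, so it transports verbatim.
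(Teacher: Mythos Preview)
Your proposal is correct and takes essentially the same approach as the paper: both arguments rest on the containment of positive stable cofibrations in positive flat stable cofibrations together with the equality of weak equivalences in the two structures, then invoke Theorem~\ref{thm:comm-mon-axiom-symmetric-spectra} for the first assertion and the weak commutative monoid axiom (Remark~\ref{remark:weak-cmon-axiom}) for the second. The paper's write-up is terser---it simply says the proof is ``identical'' to that of Theorem~\ref{thm:comm-mon-axiom-symmetric-spectra} and then points to Remark~\ref{remark:weak-cmon-axiom}---whereas you unpack the monoid-axiom step more explicitly, but the logic is the same.
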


\begin{proof}
The proof is identical to the proof that the positive flat stable model structure satisfies the strong commutative monoid axiom. This is because positive cofibrations form a subclass of positive flat cofibrations. For the statement regarding trivial cofibrations, the same logic used above holds, because it is a Bousfield localization with respect to the same class of maps, and the weak equivalences of both the positive stable and positive flat stable model structures are the same. In particular, this observation proves that the positive (stable) model structures satisfy the weak form of the commutative monoid axiom discussed in Remark \ref{remark:weak-cmon-axiom}, so commutative monoids inherit a model structure.
\end{proof}

Shipley provides a counterexample which demonstrates that $\Sym(F_1S^1)$ is not positively cofibrant (only positively flat cofibrant) because $[(F_1S^1)^{(2)}/\Sigma_2]_2 = (S^1\wedge S^1)/\Sigma_2$ and this is not $\Sigma_2$-free. Thus, Proposition \ref{prop:cofibrations-forget} cannot hold as stated. However, for the same reasons as in the proof above (namely, the containment of positive cofibrations in positive flat cofibrations) we can obtain the following weakened form of Proposition \ref{prop:cofibrations-forget}.

\begin{proposition} \label{prop:cofibrations-forget-weakened}
Let $\M$ be the positive stable model structure on symmetric spectra, and let $CAlg(R)$ be the model structure passed from $\M$ to the category of commutative $R$-algebras (where $R$ is a commutative monoid in $\M$). Suppose $f$ is a cofibration in $CAlg(R)$ whose source is cofibrant in $\M$. Then $f$ forgets to a cofibration in the positive flat stable model structure. 
\end{proposition}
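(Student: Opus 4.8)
The plan is to rerun the proof of Proposition~\ref{prop:cofibrations-forget} from Appendix~\ref{appendix-proof}, keeping careful track of which of the two model structures each map lands in. Write $\M$ for the positive stable model structure and $\M^{\flat}$ for the positive flat stable model structure. These share the same underlying category (so the same colimits), $\M^{\flat}$ is a monoidal model category, and every $\M$-cofibration is an $\M^{\flat}$-cofibration (Lemma~2.3 of \cite{shipley-positive}); in particular every object cofibrant in $\M$ is cofibrant in $\M^{\flat}$.

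First I would reduce to the case $R = S$. Since $CAlg(R)$ is the undercategory $R\downarrow\CMon(\M)$ with the model structure created by the forgetful functor to $\CMon(\M)$, a cofibration in $CAlg(R)$ forgets to a cofibration in $\CMon(\M)$ (the remark after Proposition~1.1.8 of \cite{hovey-book}), and the hypothesis that its source is cofibrant in $\M$ says precisely that the underlying spectrum of that source is $\M$-cofibrant. So it suffices to show: a cofibration $g$ in $\CMon(\M)$ whose underlying spectrum is $\M$-cofibrant forgets to an $\M^{\flat}$-cofibration. Writing $g$ (via the retract argument) as a retract of a map $g'$ in $\Sym(I)$-cell with the same source, where $I$ is the set of generating cofibrations of $\M$, and using that $\M^{\flat}$-cofibrations are closed under retracts, it is enough to handle $g'$.

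Now I would invoke the filtration of Appendix~\ref{appendix-proof}: $g'$ is a transfinite composite of pushouts in $\CMon(\M)$ of maps $\Sym(f)$ with $f\in I$, and each such pushout $X\to P$ forgets to a transfinite composite of maps $P_{n-1}\to P_n$, each a pushout in $\M$ of $(\emptyset\to X)\boxprod(f^{\boxprod n}/\Sigma_n) = \mathrm{id}_X\otimes(f^{\boxprod n}/\Sigma_n)$, where $X$ is the underlying spectrum of the relevant intermediate commutative monoid. This is exactly where Proposition~\ref{prop:stable-to-flat-stable} replaces the use of the strong commutative monoid axiom in Proposition~\ref{prop:cofibrations-forget}: because $f$ is an $\M$-cofibration, $f^{\boxprod n}/\Sigma_n$ is an $\M^{\flat}$-cofibration. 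A nested transfinite induction over the outer cell filtration then shows that every intermediate commutative monoid is $\M^{\flat}$-cofibrant: the base case is the source of $g'$, which is $\M^{\flat}$-cofibrant; and if $X$ is $\M^{\flat}$-cofibrant then $\mathrm{id}_X\otimes(f^{\boxprod n}/\Sigma_n)$ is an $\M^{\flat}$-cofibration by the pushout product axiom in $\M^{\flat}$, so each $P_{n-1}\to P_n$ and hence $X\to P$ is an $\M^{\flat}$-cofibration and $P$ is $\M^{\flat}$-cofibrant. Since $\M^{\flat}$-cofibrations are closed under transfinite composition, $g'$, and hence $g$ and the original cofibration $f$, forgets to an $\M^{\flat}$-cofibration.

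The main obstacle is organizational rather than conceptual: one must run the two filtrations (attaching the cells $\Sym(f)$, and internally the maps $P_{n-1}\to P_n$) simultaneously and verify $\M^{\flat}$-cofibrancy of all intermediate objects by the nested transfinite induction above. All the genuine homotopical content is isolated in Proposition~\ref{prop:stable-to-flat-stable} and in the containment of positive cofibrations in positive flat cofibrations; the rest follows formally from $\M$ and $\M^{\flat}$ sharing an underlying category and from $\M^{\flat}$ being a monoidal model category.
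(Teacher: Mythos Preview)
Your proposal is correct and follows essentially the same approach as the paper: the paper does not write out a separate proof but simply notes that ``for the same reasons as in the proof above (namely, the containment of positive cofibrations in positive flat cofibrations)'' one obtains this as a weakened form of Proposition~\ref{prop:cofibrations-forget}, which is exactly what you have done---rerunning the filtration argument of Appendix~\ref{appendix-proof} with Proposition~\ref{prop:stable-to-flat-stable} supplying that $f^{\boxprod n}/\Sigma_n$ is a positive flat cofibration in place of the strong commutative monoid axiom. Your write-up in fact fills in more detail (the retract reduction, the reduction to $R=S$, and the nested induction verifying $\M^{\flat}$-cofibrancy of intermediate objects) than the paper does.
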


\subsection{General Diagram Spectra} \label{subsec:diagram-spectra}

In \cite{MMSS}, a general theory of diagram spectra is introduced which unifies the theories of $\mathbb{S}$-modules, symmetric spectra, orthogonal spectra, $\Gamma$-spaces, and $W$-spaces. For the first, homotopy-coherence is built into the smash product, so commutative monoids immediately inherit a model structure and there is rectification between $Com$-alg and $E_\infty$-alg. For the next two, positive model structures are introduced which allow strictly commutative monoids to inherit model structures. The rectification axiom is then proved and rectification is deduced as a result.

\begin{theorem}
The positive flat stable model structure on (equivariant) orthogonal spectra satisfies the strong commutative monoid axiom and (the equivariant analogue of) the rectification axiom. The positive stable model structure satisfies the weak commutative monoid axiom, Proposition \ref{prop:stable-to-flat-stable}, and Proposition \ref{prop:cofibrations-forget-weakened}. 
\end{theorem}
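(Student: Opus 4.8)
The plan is to reduce the orthogonal case to the symmetric case already established in Theorem \ref{thm:comm-mon-axiom-symmetric-spectra} and Proposition \ref{prop:stable-to-flat-stable}, by running the same argument with $sSet^{\Sigma_m}$ replaced by $sSet^{O(m)}$ (or $sSet^{(G\times O(m))}$ in the equivariant setting) and $G_m$ the left adjoint to the $m$-th evaluation on orthogonal spectra. First I would invoke Lemma \ref{lemma:generators-suffice} to reduce the strong commutative monoid axiom to the generating (trivial) cofibrations of the positive flat stable model structure. As in the symmetric case, the generating cofibrations have the form $S\otimes G_m(i)$ with $i$ a generating (trivial) cofibration of $sSet^{O(m)}$ for $m>0$, so by symmetry of the pushout product and unitality of $S$ one gets $(S\otimes f)^{\boxprod n}\cong S\otimes f^{\boxprod n}$, and then the analogue of Proposition 2.2.6 of \cite{hovey-shipley-smith} (the identification $G_m(X)\wedge G_{m'}(Y)\cong G_{m+m'}(X\wedge Y)$, which holds for orthogonal spectra as well) together with the fact that $G_{nm}$ is a left adjoint gives $f^{\boxprod n}\cong G_{nm}(i^{\boxprod n})$. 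The key input is then that $sSet^{O(m)}$ (equivalently, after geometric realization, $Top^{O(m)}$, or $Top^{G\times O(m)}$) satisfies the strong commutative monoid axiom — i.e. $i^{\boxprod n}/\Sigma_n$ is a (trivial) cofibration when $i$ is — which follows from Farjoun's fixed-point argument exactly as in Section \ref{subsec:spaces}, since the relevant $\Sigma_n$ action on $(Y^{\wedge n})$ commutes with the $O(m)$-action (and with the $G$-action). Since $G_{nm}$ is left Quillen and commutes with $(-)/\Sigma_n$, we conclude.

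For the stable generating trivial cofibrations, I would proceed as in Theorem \ref{thm:comm-mon-axiom-symmetric-spectra}: the extra generators come from a Bousfield localization of the levelwise structure at the stabilization maps $s_m$, so after choosing cofibrant replacements $c_m$ the new generators are of the form $c_m\boxprod I$, hence cofibrations, so $(-)^{\boxprod n}/\Sigma_n$ keeps them cofibrations; that they remain stable equivalences follows from Theorem \ref{thm:loc-preserves-cmon-axiom} once we know $\Sym^n(s_m)$ is a local equivalence, which is Proposition \ref{prop:rectification-implies-sym} applied to the localized positive flat model category with $\D=sSet$. The rectification axiom for positive flat orthogonal spectra is the assertion that $Q_{\Sigma_n}S\otimes_{\Sigma_n}X^{\otimes n}\to X^{\otimes n}/\Sigma_n$ is a weak equivalence for cofibrant $X$; this is essentially property (**) for orthogonal spectra, which is standard and appears in \cite{MMSS} (and \cite{MM} in the equivariant case) — the point is that positive cofibrant objects are built from $G_m$ of free $\Sigma_m$-cells with $m>0$, so the $n$-fold smash power carries a $\Sigma_n$-action that is suitably free away from a negligible piece, making the homotopy-orbit-to-orbit map an equivalence. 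The equivariant enhancement just carries the $G$-action along, using that $G$ and $\Sigma_n$ commute.

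For the last sentence, the positive (non-flat) stable statements are immediate once the flat case is in hand: positive cofibrations form a subclass of positive flat cofibrations, so for a positive cofibration $f$, $f^{\boxprod n}/\Sigma_n$ is a positive flat (trivial) cofibration by what we have just proved, which is the content of the orthogonal analogue of Proposition \ref{prop:stable-to-flat-stable}; this in particular gives the weak commutative monoid axiom (Remark \ref{remark:weak-cmon-axiom}) so commutative monoids inherit a model structure by Corollary \ref{cor:semi-on-comm-alg}-type reasoning together with the monoid axiom for orthogonal spectra, and the orthogonal analogue of Proposition \ref{prop:cofibrations-forget-weakened} follows from Proposition \ref{prop:cofibrations-forget} applied in the positive flat structure via the containment of cofibration classes. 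The main obstacle I anticipate is the bookkeeping in the equivariant case: one must be careful that geometric realization $sSet^{G\times O(m)}\to Top^{G\times O(m)}$ transports the axiom correctly (as in the $Top^G$ theorem, finiteness of $G$ matters), and that the appropriate family of subgroups defining the equivariant weak equivalences on orthogonal $G$-spectra is matched on both sides; modulo that, everything is a faithful transcription of the symmetric-spectra argument.
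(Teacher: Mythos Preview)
Your proposal is correct and follows essentially the same route as the paper: reduce via Lemma \ref{lemma:generators-suffice} to generating maps, factor $f^{\boxprod n}$ through $G_{nm}$ using the smash-product identity for the free functors, invoke the strong commutative monoid axiom on the underlying equivariant space category, handle the stable trivial cofibrations via Theorem \ref{thm:loc-preserves-cmon-axiom} and Proposition \ref{prop:rectification-implies-sym}, cite the literature for the rectification axiom, and use the containment of positive cofibrations in positive flat cofibrations for the final sentence. The only real divergence is your choice of base category: the paper works directly over (equivariant) \emph{topological} spaces rather than simplicial $O(m)$-sets, which sidesteps the geometric-realization bookkeeping you flag (and is the natural setting for orthogonal spectra anyway, since $O(m)$ is a compact Lie group rather than a finite one); also, for the equivariant rectification axiom the paper cites \cite{blumberg-hill} rather than Mandell--May.
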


\begin{proof}

For the positive flat stable model structure on (equivariant) orthogonal spectra, proceed as in the proof of Theorem \ref{thm:comm-mon-axiom-symmetric-spectra}, but using (equivariant) topological spaces rather than simplicial sets. The rectification axiom is proven in \cite{MMSS} (and in \cite{blumberg-hill} for the equivariant case). For the positive stable model structure proceed as in Proposition \ref{prop:stable-to-flat-stable} and Proposition \ref{prop:cofibrations-forget-weakened}.
\end{proof}

We turn now to $W$-spaces and $\Gamma$-spaces. Recall that $W$ is the category of based spaces homeomorphic to finite CW-complexes, $\Gamma$ is the category of finite based sets, and $\mathscr{D}$-spaces are functors from $\mathscr{D}$ to $Top$ (where $\mathscr{D}$ is either $W$ or $\Gamma$). The indexing category for $\Gamma$-spaces is a subset of $W$. First, Lewis's counterexample \cite{lewis} still applies to rule non-positive model structures out from consideration. This is discussed in the context of $\Gamma$-spaces in Remark 2.6 of \cite{schwede-gamma}. The author has not been able to find a place where this is written down for $W$-spaces, but it is clear that the same counterexample applies for $W$-spaces. We must work in positive model structures on $W$-spaces and $\Gamma$-spaces. Such positive model structures are introduced in Section 14 of \cite{MMSS}, where their monoidal properties are also discussed. 

Analogously to the setting of symmetric and orthogonal spectra, one can define positive \textit{flat} model structures (also known as convenient model structures) on $\Gamma$-spaces and $W$-spaces. For instance, one can carry out the program of \cite{shipley-positive} for $\Gamma$-spaces (e.g. following the work in \cite{sagave-spectra-of-units-positive-gamma} and making use of the relationship between $\Gamma$-spaces and symmetric spectra as explored in \cite{schwede-book-symmetric-spectra}) to obtain the necessary mixed model structure on spaces. From there it is purely formal to construct the appropriate levelwise model structure on diagrams, e.g. using Theorem 6.5 in \cite{MMSS}. The weak equivalences are the level equivalences and the generating cofibrations for $W$-spaces take the form $F_WI = \{F_d(i)\;|\;d\in \skel{W}, i\in I\}$ where $F_d(-)$ is $W(d,-):W\to Top$. The indexing category for $\Gamma$-spaces is a subset of $W$, so an analogous construction works for the generating cofibrations of $\Gamma$-spaces.

Passage from the levelwise structure to the positive flat model structure is again formal, and is accomplished by truncating the levelwise cofibrations to force levelwise cofibrations to be isomorphisms in degree 0. Finally, passage to the positive flat stable model structure may be accomplished via Bousfield localization, just as in Section 8 of \cite{MMSS}.

\begin{theorem}
The positive flat model structures on $W$-spaces and $\Gamma$-spaces satisfy the strong commutative monoid axiom. The positive model structure on $W$-spaces and $\Gamma$-spaces satisfies the weak commutative monoid axiom. So commutative monoids inherit model structures in both settings.
\end{theorem}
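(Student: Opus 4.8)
The plan is to run the proof of Theorem~\ref{thm:comm-mon-axiom-symmetric-spectra} essentially verbatim, with based spaces (and $\Sigma_n$-based spaces) playing the role that $sSet^{\Sigma_m}$ played for symmetric spectra and with the free diagram functors $F_d$ in place of the $G_m$. First I would use Lemma~\ref{lemma:generators-suffice} to reduce the strong commutative monoid axiom for the positive flat model structure to a statement about the generating (trivial) cofibrations, which for $W$-spaces take the form $F_d(i)$ with $d\in\skel W$ of positive degree and $i$ a generating flat cofibration of based spaces; since the indexing category $\Gamma$ sits inside $W$, $\Gamma$-spaces are handled by the same formulas.

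The level-wise computation is the heart of the matter. The key input is the monoidal property of the free functors $F_c(A)\wedge F_d(B)\cong F_{c\wedge d}(A\wedge B)$ -- the analogue for $\mathscr D$-spaces of Proposition~2.2.6 of \cite{hovey-shipley-smith} -- which holds because $F_d$ is left adjoint to $\ev_d$ and because the $\mathscr D$-space smash product is Day convolution over $(\mathscr D,\wedge)$ (here $c\wedge d$ is the monoidal product of $W$, or of $\Gamma$, in both cases a smash). Since $(F_d(i))^{\boxprod n}$ is an iterated pushout product, its domain is a colimit of words in $F_d(\mathrm{source})$ and $F_d(\mathrm{target})$, each of which rewrites as $F_{d^{\wedge n}}$ applied to the corresponding word of based spaces; as $F_{d^{\wedge n}}$ is a left adjoint it commutes with these colimits, giving $(F_d(i))^{\boxprod n}\cong F_{d^{\wedge n}}(i^{\boxprod n})$. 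Now $d^{\wedge n}$ is again of positive degree, so $F_{d^{\wedge n}}$ is left Quillen for the level-wise positive flat structure and commutes with $(-)/\Sigma_n$; and since based spaces and $\Sigma_n$-based spaces satisfy the strong commutative monoid axiom (by the results for spaces earlier in this section, via Farjoun's argument), $i^{\boxprod n}/\Sigma_n$ is a (trivial) cofibration. Applying $F_{d^{\wedge n}}$ shows $(F_d(i))^{\boxprod n}/\Sigma_n$ is a (trivial) cofibration of $\mathscr D$-spaces, which settles the level-wise positive flat case; the same goes through for $i$ a generating trivial cofibration.

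To reach the positive flat \emph{stable} structure one localizes the level-wise structure at the stabilization maps, as in Section~8 of \cite{MMSS}, and the generating trivial cofibrations acquire extra maps $c\boxprod i$ with $c$ a stable cofibrant replacement of a stabilization map. These are level-wise cofibrations, so the previous paragraph already gives that $(c\boxprod i)^{\boxprod n}/\Sigma_n$ is a cofibration; to see it is a stable equivalence I would invoke Theorem~\ref{thm:loc-preserves-cmon-axiom} (in its $Top$-enriched form), which reduces matters to showing $\Sym^n$ sends the stabilization maps to stable equivalences, and this follows from Proposition~\ref{prop:rectification-implies-sym} applied to $L_{\C'}$ of the level-wise positive flat structure with $\D=Top$, using that this localization is monoidal (Theorem~\ref{thm:PPaxiom}) and satisfies the rectification axiom. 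Finally, the positive (non-flat) case is handled exactly as in Proposition~\ref{prop:stable-to-flat-stable}: positive cofibrations form a subclass of positive flat cofibrations, so the computation above carries $(-)^{\boxprod n}/\Sigma_n$ of a positive (trivial) cofibration into the positive flat (trivial) cofibrations; since the positive and positive flat structures are localizations at the same set of maps and share their weak equivalences, the trivial-cofibration clause goes through and yields the weak commutative monoid axiom of Remark~\ref{remark:weak-cmon-axiom}. Combined with the monoid axiom and the requisite smallness, which hold for these diagram spectra just as for symmetric spectra, Theorem~\ref{thm:commMonModel} (in the flat case) and Corollary~\ref{cor:semi-on-comm-alg} together with Remark~\ref{remark:weak-cmon-axiom} (in the non-flat case) produce the asserted model structures on commutative monoids.

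I expect the main obstacle to be foundational rather than homotopy-theoretic: as noted above, there is no ready reference for the positive flat (``convenient'') model structures on $W$-spaces, so these must first be built following \cite{shipley-positive} -- and, for $\Gamma$-spaces, \cite{sagave-spectra-of-units-positive-gamma} together with the comparison with symmetric spectra of \cite{schwede-book-symmetric-spectra} -- after which one must verify the rectification axiom in each setting; this is straightforward for $W$-spaces as in \cite{MMSS}, but for $\Gamma$-spaces it is delicate and is most safely obtained by transporting along the Quillen equivalence with connective symmetric spectra. One must also state the free-functor identity $F_c(A)\wedge F_d(B)\cong F_{c\wedge d}(A\wedge B)$ with care in the $\Gamma$-space setting, where the smash product is Day convolution over $(\Gamma,\wedge)$ rather than a simple ``$F_{m+n}$'' reindexing.
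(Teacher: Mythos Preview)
Your levelwise argument is correct and is exactly what the paper intends: reduce to generators via Lemma~\ref{lemma:generators-suffice}, use the Day-convolution identity $F_c(A)\wedge F_d(B)\cong F_{c\wedge d}(A\wedge B)$ to rewrite $(F_d(i))^{\boxprod n}$ as $F_{d^{\wedge n}}(i^{\boxprod n})$, and then invoke the strong commutative monoid axiom for spaces. The paper's own proof is precisely this, stated as ``proceed precisely as for the positive flat model structure on symmetric spectra; reduce to a verification in spaces.''

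The genuine gap is your attempt to pass to the \emph{stable} model structure. Reread the theorem: it says ``positive flat model structures,'' not ``positive flat stable model structures.'' The omission of the word \emph{stable} is deliberate, and the paper explains why immediately after: the rectification axiom is not merely unknown for $W$-spaces and $\Gamma$-spaces, it \emph{cannot} hold. If it did, your argument would go through, commutative $\mathscr{D}$-rings would be Quillen equivalent to $E_\infty$-algebras, and this contradicts Lawson's theorem \cite{lawson-gamma} that there exist $E_\infty$-algebras in $\Gamma$-spaces (and hence, via the monoidal comparison of \cite{MMSS}, in $W$-spaces) which cannot be strictified. So your assertion that the rectification axiom ``is straightforward for $W$-spaces as in \cite{MMSS}'' is simply false; \cite{MMSS} proves it for symmetric and orthogonal spectra, not for $W$-spaces. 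Consequently your invocation of Proposition~\ref{prop:rectification-implies-sym} and Theorem~\ref{thm:loc-preserves-cmon-axiom} to handle the extra generating trivial cofibrations of the stable localization collapses, and the stable statement is not available by this route. Strip out the stable paragraph; what remains is the paper's proof.
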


The verification of the strong commutative monoid axiom proceeds precisely as for the positive flat model structure on symmetric spectra. In particular, one can reduce the verification to a verification in spaces. We leave the details to the reader. The difficulty comes in the part of the proof when one attempts to pass the commutative monoid axiom to the stable model structure, and that is why the adjective stable is not in the statement of the theorem. In particular, the difficulty is that the rectification axiom is not known to hold for $\mathscr{D}$-spaces (where $\mathscr{D}$ is either $W$ or $\Gamma$). Indeed, we can show that the rectification axiom cannot hold.

First, if the rectification axiom held, then the proof that the strong commutative monoid axiom holds for positive flat stable symmetric spectra (i.e. via Theorem \ref{thm:loc-preserves-cmon-axiom}) would prove that $\mathscr{D}$-spaces satisfy the commutative monoid axiom. Secondly, because of the rectification axiom the rest of the work in \cite{MMSS} and \cite{shipley-positive} would prove that commutative $\mathscr{D}$-rings were Quillen equivalent to $E_\infty$-algebras and this would contradict the main theorem of Tyler Lawson's paper \cite{lawson-gamma}. 

Lawson produces an $E_\infty$-algebra in $\Gamma$-spaces which cannot be strictified to a commutative $\Gamma$-ring. Together with the monoidal functor from $\Gamma$-spaces to $W$-spaces (developed in \cite{MMSS}), this same counterexample proves that not all $E_\infty$-algebras in $W$-spaces can be strictifed to commutative $W$-rings.

\subsection{Diagram Categories}

We now investigate conditions on a model category $\M$ and on a small indexing category $\D$ so that the commutative monoid axiom holds for $\M^\D$.

\subsubsection{Injective Model Structures}

The case of injective model structures on diagrams is particularly easy, since weak equivalences and cofibrations are defined in $\M$. By \ref{subsec:spaces}, the category of simplicial sets satisfies the commutative monoid axiom. This property will be inherited by the category of simplicial presheaves with the injective model structure. In particular, we have

\begin{proposition}
Suppose $\M$ is a monoidal model category which satisfies the commutative monoid axiom. Suppose $\D$ is a small category such that the injective model structure on $\M^\D$ exists. Then the injective model structure with the levelwise monoidal structure satisfies the commutative monoid axiom.
\end{proposition}

\begin{proof}
Suppose $f:X\to Y$ is a cofibration in $\M^\D$ so that each $f_d$ is a cofibration in $\M$. Colimits in $\M^\D$ are computed componentwise, so the domain $Q_n$ of $f^{\boxprod n}$ has the property that $(Q_n)_d$ is the domain of $(f_d)^{\boxprod n}$. Furthermore, the same is true of $Q_n /\Sigma_n$ and of $Y^{\otimes n}/\Sigma_n$ for the same reason. Since $\M$ satisfies the commutative monoid axiom, each $(f_d)^{\boxprod n}/\Sigma_n$ is a cofibration in $\M$. Thus, $f^{\boxprod n}/\Sigma_n$ is a levelwise cofibration, i.e. a cofibration in $\M^\D$. The case of a trivial cofibration $f$ is similar, since injective weak equivalences are also defined levelwise.
\end{proof}

In a related vein, we make the following conjecture.

\begin{conjecture}
Any excellent model category in the sense of \cite{htt}, Definition A.3.2.16, will satisfy the commutative monoid axiom.
\end{conjecture}

The results in \ref{subsec:spaces}, together with the main results of \cite{white-localization} imply that any left Bousfield localization of simplicial sets will satisfy the commutative monoid axiom. A similar statement is true for the injective model structure on simplicial presheaves, but one must be more careful that the pushout product axiom is preserved by the localization (i.e. not all Bousfield localizations are monoidal in the sense of \cite{white-localization}). In recent joint work with Michael Batanin, we have shown that all monoidal Bousfield localizations of simplicial presheaves satisfy the hypotheses of Theorem \ref{thm:loc-preserves-cmon-axiom} and as a result the model structures considered by Rezk in \cite{rezk-theta-n} will satisfy the commutative monoid axiom.

\subsubsection{Projective Model Structure}

The projective model structure is more subtle than the injective model structure, because one has less control over the projective cofibrations. For this reason, we begin with a simple case, but one which we expect to have important applications in the future.

Recall from \cite{hovey-smith-ideals} that for any monoidal model category $\M$, the diagram category $\Arr(\M):=\M^{\bullet \to \bullet}$ may be endowed with a monoidal model structure in which weak equivalences and fibrations are defined componentwise and in which the monoidal product is the pushout product $f\boxprod g$. Hovey proved that if $\M$ is cofibrantly generated and satisfies the monoid axiom then the same is true of $\Arr(\M)$.

When $\M$ is the category of symmetric spectra, monoids are Smith ideals, named after Jeff Smith to whom this definition is due. The main results in \cite{hovey-smith-ideals} together with the model structure on monoids from \cite{SS00} allow for a model category of Smith ideals. Smith envisioned using ideals to extend many results in commutative algebra to the world of ring spectra. A necessary step in carrying out this program is having a good homotopy theory for commutative ideals. This is accomplished by the following result.

\begin{theorem} Suppose $\M$ is a cofibrantly generated monoidal model category. Then the projective model structure on $\Arr(\M)$ with monoidal structure given by the pushout product satisfies the commutative monoid axiom. If $\M$ satisfies the (strong) commutative monoid axiom then so does $\Arr(\M)$.
\end{theorem}

\begin{proof}
By Appendix \ref{appendix-generators} it is sufficient to check the commutative monoid axiom and the strong commutative monoid axiom on generating (trivial) cofibrations. Recall that the model structure on $\Arr(\M)$ is transferred from $\M^{\bullet \; \; \bullet} = \M\times \M$. Let $\D$ be the walking arrow category $\bullet \to \bullet$. The generating cofibrations for $\Arr(M)$ are $F(I')$ and $F(J')$ where $I'$ and $J'$ are generators for $\M^{\D^{disc}} = \prod_{d\in \D} \M$ and $F$ is the functor

$$F(X)=\coprod_{\alpha \in Ob(\D)} F^\alpha_{X_\alpha} = \coprod_\alpha X_\alpha \otimes F_*^\alpha \mbox{ i.e. } F(X)_{\beta} = \coprod_\alpha \coprod_{\D(\alpha,\beta)} X_\alpha$$

For simplicity we will remain in the case of the generating trivial cofibrations. The proof for generating cofibrations is identical. A typical element of $J'$ is either $(0,j)$ or $(j,0)$ where $j:A\to A'$ is a generating trivial cofibration for $\M$. The generating trivial cofibrations of $\Arr(\M)$ are given by applying the functor $F$ to such generating morphisms. The resulting squares (read as morphisms from left to right) are

\begin{align*}
\xymatrix{\emptyset \ar[r] \ar[d] & \emptyset \ar[d] & & A \ar[r] \ar[d] & A' \ar[d]\\
A \ar[r] & A' & & A \ar[r] & A'}
\end{align*}

where the left-hand square is $\phi:=F(j,0)$ and the right hand square is $\psi:=F(0,j)$. We must analyze $\phi^{\boxprod_2 n}/\Sigma_n$ and $\psi^{\boxprod_2 n}/\Sigma_n$, where $\boxprod_2$ denotes the pushout product taken in the monoidal category $\Arr(\M)$ where the monoidal product on arrows is given by the Day convolution product (i.e. by the usual pushout product $\boxprod$). For simplicity we will focus on the case $n=2$, so we are analyzing $(\phi \boxprod_2 \phi)/\Sigma_2$ and $(\psi \boxprod_2 \psi)/\Sigma_2$. We can draw $\phi \boxprod_2 \phi$ and $\psi \boxprod_2 \psi$ as
\begin{align*}
\xymatrix{\emptyset \ar[r]^{Id} \ar[d] & \emptyset \ar[d] & & A'\otimes A \coprod_{A\otimes A} A\otimes A' \ar[r]_-{j\boxprod j} \ar[d] & A'\otimes A' \ar[d] \\
A'\otimes A \coprod_{A\otimes A} \limits A\otimes A' \ar[r]_-{j\boxprod j} & A'\otimes A' & & A'\otimes A \coprod_{A\otimes A} \limits A\otimes A' \ar[r]_-{j\boxprod j} & A'\otimes A'}
\end{align*}
In order to show that a square is a trivial cofibration in $Arr(\M)$ one must show that both horizontal maps in the square are trivial cofibrations in $\M$ and that the pushout corner map is a trivial cofibration in $\M$. 

For $(\phi \boxprod_2 \phi)/\Sigma_2$ the top horizontal map is $Id_\emptyset$ and so is a trivial cofibration. The bottom horizontal map and the pushout corner map are both $(j\boxprod j)/\Sigma_2$ and this is a trivial cofibration because we assumed the commutative monoid axiom on $\M$ (or the strong commutative monoid axiom for the case of cofibrations). Thus, $(\phi \boxprod_2 \phi)/\Sigma_2$ is a trivial cofibration in $\Arr(\M)$.

For $(\psi \boxprod_2 \psi)/\Sigma_2$ both the top and bottom horizontal maps are $(j\boxprod j)/\Sigma_2$ and we have seen this map is a trivial cofibration. The pushout corner map is $Id_{A'\otimes A'}/\Sigma_2 = Id_{A'\otimes A'/\Sigma_2}$ and so is a trivial cofibration in $\M$ because it's an isomorphism. Thus, $(\psi \boxprod_2 \psi)/\Sigma_2$ is a trivial cofibration in $\Arr(\M)$ and so $\Arr(\M)$ satisfies the $n=2$ case of the commutative monoid axiom.

Before tackling the case of general $n$, we record the general formula for 
\begin{align*}
\gamma \boxprod_2 \delta: f \boxprod k \coprod_{f\boxprod h} g\boxprod h \to g\boxprod k
\end{align*}
where $\gamma:f\to g$, $\delta:h\to k$, $f:A\to A'$, $g:B\to B'$, $h:X\to X'$, and $k:Y\to Y'$. Visualize $\gamma \boxprod_2 \delta$ as going from left to right:
\begin{align*}
\xymatrix{(A \otimes Y' \coprod_{A\otimes Y} \limits A'\otimes Y)
\coprod_{A \otimes X' \coprod_{A\otimes X}A'\otimes X} \limits
(B' \otimes X \coprod_{B\otimes X} \limits B\otimes X')
\ar[r] \ar[d] & 
B\otimes Y' \coprod_{B\otimes Y}\limits B'\otimes Y \ar[d] \\
A'\otimes Y' \coprod_{A' \otimes X'} \limits B'\otimes X' \ar[r] &
B'\otimes Y'}
\end{align*}

For the case of general $n$, observe that $\phi^{\boxprod_2 (n)}$ is again a square with $\emptyset$ across the top row, but now the bottom horizontal map is $j^{\boxprod n}$. This can be seen inductively, since $\phi^{\boxprod_2 (n)} = \phi^{\boxprod_2 (n-1)} \boxprod_2 \phi$ and the formula above tells us the effect of applying $-\boxprod_2 \phi$. We are lucky here since $\phi$ takes such a simple form. For $\psi$ the situation is slightly more complicated, but because both vertical maps in $\psi$ are identities the same will be true of $\psi^{\boxprod_2 (n)}$. A straight-forward induction demonstrates that the horizontal maps in $\psi^{\boxprod_2 (n)}$ are both $j^{\boxprod n}$. The same sort of analysis as in the $n=2$ case demonstrates that $\psi^{\boxprod_2 (n)}/\Sigma_n$ and $\phi^{\boxprod_2 (n)}/\Sigma_n$ are trivial cofibrations in $\Arr(\M)$ because $j^{\boxprod n}/\Sigma_n$ and identity maps are both trivial cofibrations in $\Arr(\M)$.

\end{proof}

As a consequence, the category of commutative Smith ideals inherits a model structure as soon as $\M$ satisfies the commutative monoid axiom. Using the results in Subsections \ref{subsubsec:posFlatSymSpec} and \ref{subsubsec:posSymSpec} we obtain the following:

\begin{corollary}
Suppose $\M$ is the positive flat (stable) model structure on symmetric spectra or orthogonal spectra. Then commutative Smith ideals inherit a model structure because $\M$ satisfies the strong commutative monoid axiom. Furthermore, a cofibrant commutative Smith ideal forgets to a cofibrant object of $Arr(\M)$.
\end{corollary}

\begin{corollary}
Suppose $\M$ is the positive (stable) model structure on symmetric spectra or orthogonal spectra. Then commutative Smith ideals inherit a model structure because $\M$ satisfies the commutative monoid axiom. Furthermore, a cofibrant commutative Smith ideal forgets to a positive flat cofibrant object of $Arr(\M)$.
\end{corollary}

In fact, we can use the method in the theorem above to prove something more general. First note that the commutative monoid axiom can be transferred across certain types of adjunctions. In the following, assume $F$ is a nonunital strongly symmetric monoidal functor in the sense of \cite{thomason-first-quadrant}. This simply means $F$ satisfies all the hypotheses of a strong symmetric monoidal functor except $F$ need not satisfy the hypotheses involving the unit. So $F(n_1 \otimes n_2)\cong F(n_1)\otimes F(n_2)$ and there are diagrams encoding associativity and commutativity for $F$, but $F$ need not preserve the unit.

\begin{lemma} \label{lemma:comm-mon-ax-to-proj-model}
Suppose $\cat{N}$ is a cofibrantly generated model category satisfying the (strong) commutative monoid axiom, that $F:\cat{N}\to \cat{A}$ is a left adjoint functor along which a model structure is transferred to $\cat{A}$, and that $F$ is a nonunital strongly symmetric monoidal functor. Then $\cat{A}$ satisfies the (strong) commutative monoid axiom.
\end{lemma}

\begin{proof}
As in the proof of the theorem above, it suffices to check that for every generating (trivial) cofibration $F(f)$ of $\cat{A}$, $(F(f))^{\boxprod n}/\Sigma_n$ is again a (trivial) cofibration. Because $F$ preserves the monoidal product and is left Quillen, $F$ commutes with the operation $(-)^{\boxprod n}$, so $(F(f))^{\boxprod n}/\Sigma_n = F(f^{\boxprod n})/\Sigma_n = F(f^{\boxprod n}/\Sigma_n)$ again using that $F$ is a left adjoint. Since $f$ is a generating (trivial) cofibration of $\cat{N}$, this map has the form $F(g)$ where $g$ is a (trivial) cofibration of $\cat{N}$. Hence, $F(g)$ is a (trivial) cofibration of $\cat{A}$ because $F$ is left Quillen.
\end{proof}

Let $\D$ be a general diagram category. The projective model structure on $\M^\D$ is transferred from $\M^{\D^{disc}} = \prod_{d\in \D} \M$ along the functor $F$ whose formula is given in the theorem above. If $\M^{\D^{disc}}$ is given the objectwise product then this functor $F$ does not preserve binary products in general. In the case $\Arr(\M)$ one can endow $\M^{\D^{disc}}$ with the monoidal structure $(X_1,Y_1)\boxprod_d (X_2,Y_2)=(X_1Y_2\coprod X_1Y_1 \coprod X_2Y_1,Y_1Y_2)$ which is cooked up to match the Day convolution product on $\Arr(\M)$, but then the functor from $\M$ to $\M^{\D^{disc}}$ may not be monoidal. In the following, we transfer the model structure from $\M$ rather than from $\M^{\D^{disc}}$.

Recall that for every $d\in \D$ there are right adjoints $ev_d:\M^{\D}_{proj}\to \M$ whose left adjoints are given by $F_d:X\mapsto \D(d,-)\cdot X = \coprod_{\D(d,-)}X$. The model structure on $\M^\D$ may be transferred directly from $\M$ via the product of these left adjoints (Lemma 4.3 in \cite{DRO}), so it is valuable to know when they are strongly symmetric monoidal.

\begin{theorem}
Suppose $(\M,S,\otimes)$ is a cofibrantly generated symmetric monoidal model category satisfying the (strong) commutative monoid axiom. Suppose $(\D,I,\circledast)$ is a small symmetric monoidal category in which the monoidal product is idempotent, i.e. there are natural isomorphisms $d\circledast d \cong d$ for all $d\in \D$ and they induce isomorphisms $\D(d\circledast d,-)\cong \D(d,-)$. Then the projective model structure on $\M^\D$ exists and the Day convolution product $\circ$ makes it into a cofibrantly generated symmetric monoidal model category satisfying the (strong) commutative monoid axiom.
\end{theorem}

\begin{proof}
The existence of a cofibrantly generated model structure on $\M^\D$ is an old result, which may be found as Theorem 11.6.1 in \cite{hirschhorn}. For the fact that the Day convolution product makes $\M^\D$ into a symmetric monoidal model category we refer to Theorem 4.1 in \cite{batanin-berger}. For the commutative monoid axiom we must show that the left adjoints $F_d$ preserve binary monoidal products. We must show that $F_d(A\otimes B) \cong F_d(A) \circ F_d(B)$. By definition of the Day convolution product, the latter can be realized as the following coend, which we re-write using the Yoneda lemma
\begin{align*}
\int^{x,y\in \D} (\D(d,x)\cdot A)\otimes (\D(d,y)\cdot B) \cdot \D(x\circledast y,-)\\
= \int^{x,y} (\D(d,x)\times \D(d,y) \times \D(x\circledast y,-))\cdot A\otimes B\\
= \D(d\circledast d,-)\cdot A\otimes B
\end{align*}
Our hypothesis on $\D$ guarantees that $\D(d\circledast d,-)\cdot A\otimes B = \D(d,-)\cdot A\otimes B$ as required. 
\end{proof}

Observe that the hypothesis that $F_d$ is strongly symmetric monoidal would have been too strong, since the only $d$ for which $F_d$ preserves the monoidal unit is $d=I$ the unit of $\D$.

In fact, Theorem 4.1 in \cite{batanin-berger} proves that $\M^\D$ is a symmetric monoidal model category with respect to the Day convolution product in a more general setting than that stated above. If one is willing to assume $\M$ is compactly generated then one can generalize to the case of a small $\M$-enriched category $\D$:

\begin{corollary}
Suppose $(\M,S,\otimes)$ is a compactly generated symmetric monoidal model category satisfying the (strong) commutative monoid axiom. Suppose $(\D,I,\circledast)$ is a small $\M$-enriched symmetric monoidal category in which the monoidal product is idempotent as above. Suppose one of the following three conditions
\begin{enumerate}
\item all hom-objects of $\D$ are discrete in $\M$ (this recovers the classical projective model structure), 
\item all hom-objects of $\D$ are cofibrant in $\M$, or
\item the monoid axiom holds in $\M$.
\end{enumerate}

Then the projective model structure on $\M^\D$ exists and the Day convolution product $\circ$ makes it into a cofibrantly generated symmetric monoidal model category satisfying the (strong) commutative monoid axiom.
\end{corollary}

We omit the proof since it is identical to the theorem above, except that we rely on Theorem 4.1 in \cite{batanin-berger} for the existence of the transferred model structure on $\M^\D$ as well as for the fact that it forms a monoidal model structure.

We also have a result in case $\M^\D$ is endowed with the objectwise monoidal product $(F\otimes G)(d) = F(d)\otimes G(d)$. In this case we may not rely on Theorem 4.1 in \cite{batanin-berger} and so we will focus attention again on a discrete indexing category $\D$.

\begin{corollary}
Suppose $\D$ is a small poset with finite coproducts and $(\M,S,\otimes)$ is a cofibrantly generated symmetric monoidal model category satisfying the (strong) commutative monoid axiom. Then the projective model structure on $\M^\D$ with the objectwise monoidal product $\otimes$ is a cofibrantly generated symmetric monoidal model category satisfying the commutative monoid axiom.
\end{corollary}

\begin{proof}
That the projective model structure exists is again described in Theorem 11.6.1 of \cite{hirschhorn}. That it satisfies the pushout product axiom can be found in Lemma 3.8 of \cite{yalin-alg-over-prop}, because we have assumed $\D$ has finite coproducts. We must now verify that $F_d(A\otimes B)\cong F_d(A) \otimes F_d(B)$ for all $d\in \D$. The latter is now $(\D(d,-) \times \D(d,-))\cdot A\otimes B$. It is an easy exercise to verify that for any poset $\D$, this is isomorphic to $\D(d,-)\cdot A\otimes B$ as required.
\end{proof}

Relating the situations of the above two results we have the following

\begin{proposition}
If $\D$ is a Cartesian closed small category then the Day convolution product agrees with the objectwise product on $\M^{\D^{op}}$.
\end{proposition}

\begin{proof}
Let $X,Y:\D^{op} \to \M$. Let $\times$ denote the Cartesian product on $\D$ and on $\M^{\D^{op}}$. We use the Cartesian assumption in (*) below and we use the closed hypothesis for the next equality after (*):

$$(X \circ Y)_\beta = \int^{\gamma,\delta \in \D} \coprod_{\D(\beta,\gamma \times \delta)} X_\gamma \otimes Y_\delta$$
$$ \stackrel{*}{=} 
\int \coprod_{\stackrel{\beta\to \gamma}{\beta\to \delta}} X_\gamma \otimes Y_\delta = 
\int^\gamma \coprod_{\beta \to \gamma}X_\gamma \otimes \int^\delta \coprod_{\beta\to\delta}Y_\delta = X_\beta \otimes Y_\beta = (X\times Y)_\beta$$

The last non-trivial equality is the Yoneda lemma.
\end{proof}

\subsubsection{Reedy Model Structures}

We have focused on the projective model structure, but for directed categories analogous statements hold for the Reedy model structure, since it coincides with the projective model structure. In order to make the Reedy structure into a monoidal model category we require two results of Barwick \cite{barwickSemi}. In the following, assume $\D$ is a Reedy category and $\M$ is a monoidal model category.

\begin{proposition}[Barwick's Corollary 4.17]
Suppose $(\D,I,\circledast)$ is monoidal and $\circledast:\D \times \D \to \D$ defines a right fibration of Reedy categories, i.e. if for any model category $\cat{N}$, the induced adjunction $\cat{N}^\D \leftrightarrows \cat{N}^{\D \times \D}$ is a Quillen adjunction. Then $\M^\D$ with the Reedy model structure and the Day convolution product satisfies the pushout product axiom.
\end{proposition}

\begin{proposition}[Barwick's Theorem 4.18]
Suppose $\D$ is left fibrant, i.e. for any model category $\cat{N}$ the Reedy functor $\cat{N}\to \ast$ where $\ast$ is the terminal Reedy category induces an adjunction $\cat{N}^\D \leftrightarrows \cat{N}^\ast$ which is a Quillen adjunction. Suppose that every morphism in the inverse category associated to $\D$ is an epimorphism. Then the objectwise monoidal product endows the Reedy model category $\M^\D$ with the structure of a monoidal model category.
\end{proposition}

From these, together with our results on projective model structures and the fact that Reedy model structures coincide with projective model structures whenever $\D$ is a directed category we have:

\begin{corollary}
Suppose $(\D,I,\circledast)$ is a small directed monoidal category, that $\circledast:\D \times \D \to \D$ defines a right fibration of Reedy categories, and that $d\circledast d \cong d$ for all $d\in \D$. Suppose $\M$ is a cofibrantly generated monoidal model category satisfying the (strong) commutative monoid axiom. Then $\M^\D$ with the Reedy model structure and the Day convolution product satisfies the (strong) commutative monoid axiom.
\end{corollary}

\begin{corollary}
Suppose $\D$ is a small directed poset with finite coproducts and is left fibrant as a Reedy category. Suppose $\M$ satisfies the (strong) commutative monoid axiom. Then the Reedy model structure on $\M^\D$ with the objectwise monoidal product satisfies the (strong) commutative monoid axiom.
\end{corollary}

In the second corollary we were able to remove Barwick's condition about epimorphisms because our category $\D$ has a trivial inverse category.

\subsection{Other Examples}

There are several other examples which we have not investigated and which we would be curious to learn more about. We list them here:

\begin{itemize}
\item Stable module categories.

\item Comodules over a Hopf algebroid.

\item The model for spectra consisting of simplicial functors, in the style of \cite{lydakis}.

\end{itemize}

We have not addressed positive model structures on motivic symmetric spectra. We understand that these examples are central to the work of \cite{dmitri}, which will appear soon.

\appendix
\section{Sufficiency of Commutative Monoid Axiom on Generators}
\label{appendix-generators}

We prove that if the strong commutative monoid axiom holds for the generating (trivial) cofibrations $I$ and $J$ then it holds for all (trivial) cofibrations.

\begin{lemma}
Suppose $\M$ is a cofibrantly generated monoidal model category and that for all $f \in I$ (resp. $J$) we know that $f^{\boxprod n}/\Sigma_n$ is a (trivial) cofibration. Then the strong commutative monoid axiom holds for $\M$.
\end{lemma}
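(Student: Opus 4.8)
The plan is to reduce the statement for a general (trivial) cofibration to the case of the generating maps by a double induction: first using that every cofibration is a retract of a relative $I$-cell complex, and then analyzing how the operation $h \mapsto h^{\boxprod n}/\Sigma_n$ interacts with pushouts and transfinite compositions. The key structural input is the observation, already exploited in the proof of Theorem \ref{thm:commMonModel} and in Remark \ref{remark-enveloping-computations}, that if $g : A \to B$ is a pushout of $h : K \to L$ along some map $K \to A$, then $g^{\boxprod n}$ admits a filtration $A^{\otimes n} = Q_0^g \to Q_1^g \to \cdots \to Q_n^g = B^{\otimes n}$ in which the stage $Q_{j-1}^g \to Q_j^g$ is (after passing to $\Sigma_j$-coinvariants appropriately) built from $h^{\boxprod j}$ smashed with a fixed power of $A$. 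More precisely, the cube description shows $g^{\boxprod n}/\Sigma_n$ is itself a composite of pushouts of maps of the form $A^{\otimes (n-j)} \otimes (h^{\boxprod j}/\Sigma_j)$, i.e. of maps $\big(\emptyset \to A^{\otimes(n-j)}\big) \boxprod \big(h^{\boxprod j}/\Sigma_j\big)$, possibly after precomposing with $A$. Granting the hypothesis that $h^{\boxprod j}/\Sigma_j$ is a (trivial) cofibration, the pushout product axiom then shows each such map is a (trivial) cofibration, so $g^{\boxprod n}/\Sigma_n$ is too.

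First I would set up the two closure properties carefully. \emph{Closure under pushout}: given the filtration of $g^{\boxprod n}$ above, show each filtration stage, after taking $\Sigma_n$-coinvariants, is a pushout of a map of the form $(\emptyset \to A^{\otimes(n-j)}) \boxprod (h^{\boxprod j}/\Sigma_j)$ for $1 \le j \le n$; invoking the pushout product axiom (and its trivial variant when $h$ is a trivial cofibration, so that $h^{\boxprod j}/\Sigma_j$ is a trivial cofibration) gives that $g^{\boxprod n}/\Sigma_n$ is a (trivial) cofibration. \emph{Closure under transfinite composition}: if $X_0 \to X_1 \to \cdots$ is a $\lambda$-sequence of maps each of whose ``$\boxprod n / \Sigma_n$'' is a (trivial) cofibration, then writing the composite $g = X_0 \to X_\lambda$ one again filters $g^{\boxprod n}$ by cubes whose vertices are tensor products of the successive $X_\beta$'s; the associated graded pieces are pushouts of the ``$\boxprod j/\Sigma_j$'' of individual maps $X_\beta \to X_{\beta+1}$ smashed with fixed objects, hence (trivial) cofibrations, and a transfinite composite of (trivial) cofibrations is a (trivial) cofibration. \emph{Closure under retract}: if $h$ is a retract of $h'$ in the arrow category, then $h^{\boxprod n}/\Sigma_n$ is a retract of $(h')^{\boxprod n}/\Sigma_n$, since both $(-)^{\boxprod n}$ and $(-)/\Sigma_n$ are functorial on the arrow category, and (trivial) cofibrations are closed under retracts.

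Assembling these: an arbitrary cofibration $h$ is a retract of a transfinite composite of pushouts of coproducts of maps in $I$ (coproducts being handled the same way, as a transfinite composite of pushouts). Starting from the hypothesis on the generators $I$ (resp. $J$), closure under pushout, coproduct, transfinite composition, and retract propagates the conclusion to all cofibrations (resp. trivial cofibrations), which is exactly the strong commutative monoid axiom. The main obstacle I anticipate is bookkeeping in the filtration/cube argument: one must verify that the $\Sigma_n$-action on the $n$-cube associated to a pushout (or to a transfinite composite) really does make the subquotients of the filtration, after passing to coinvariants, into pushouts of pushout-products of the lower maps $h^{\boxprod j}/\Sigma_j$ rather than of the non-symmetrized $h^{\boxprod j}$ — this is precisely the delicate point that the filtration in Appendix \ref{appendix-proof} is designed to handle, and the cleanest route is to quote that filtration (the same construction $P_{n-1} \to P_n$ as a pushout of $X \otimes h^{\boxprod n}/\Sigma_n$) rather than to rebuild it. A secondary subtlety is ensuring the interaction with coproducts and the precise indexing of the transfinite composition is compatible with $\Sigma_n$-equivariance, but this is routine once the single-pushout case is in hand.
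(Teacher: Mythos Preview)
Your overall architecture is right and matches the paper: define $\sP$ (resp.\ $\sP'$) as the class of maps $f$ with $f^{\boxprod n}/\Sigma_n$ a (trivial) cofibration for all $n$, and show these classes are closed under retracts, pushouts, and transfinite composition. The retract argument is fine. But the pushout and composition steps, as you have written them, both rely on an implicit cofibrancy hypothesis that is not available, and this is a genuine gap.

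\textbf{Pushouts.} You filter $A^{\otimes n}\to B^{\otimes n}$ and claim the stages after $\Sigma_n$-coinvariants are pushouts of maps $A^{\otimes(n-j)}\otimes(h^{\boxprod j}/\Sigma_j)$, then invoke the pushout product axiom on $(\emptyset\to A^{\otimes(n-j)})\boxprod(h^{\boxprod j}/\Sigma_j)$. That requires $A^{\otimes(n-j)}$ (really $\Sym^{n-j}(A)$, once you track the coinvariants correctly) to be cofibrant, which is not given. The paper avoids this entirely: if $g$ is a pushout of $h$ in $\M$, then there is a $\Sigma_n$-equivariant pushout square exhibiting $g^{\boxprod n}$ as a pushout of $h^{\boxprod n}$ (this is Proposition~6.13 of \cite{harper-spectra}); applying $(-)/\Sigma_n$ and commuting colimits shows $g^{\boxprod n}/\Sigma_n$ is a pushout of $h^{\boxprod n}/\Sigma_n$ directly. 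No filtration, no cofibrancy of $A$. Note also that the filtration $P_{n-1}\to P_n$ from Appendix~\ref{appendix-proof} you propose to quote is a filtration of a pushout in $\CMon(\M)$, not in $\M$, so it is not the relevant tool here.

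\textbf{Composition and transfinite composition.} The same problem recurs: your ``associated graded pieces are pushouts of $(\text{individual map})^{\boxprod j}/\Sigma_j$ smashed with fixed objects'' again needs those fixed objects cofibrant. The paper's key observation is that for a two-fold composite $gf$, the map $Q^n_{n-1}(gf)/\Sigma_n\to Q^n_{n-1}(g)/\Sigma_n$ factors into stages each of which is a pushout of a genuine \emph{pushout product}
\[
\bigl(f^{\boxprod q}/\Sigma_q\bigr)\boxprod\bigl(g^{\boxprod(n-q)}/\Sigma_{n-q}\bigr),
\]
with both factors assumed (trivial) cofibrations, so the pushout product axiom applies with no cofibrancy side conditions. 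This is the crucial maneuver, and it is what the cube bookkeeping with the intermediate objects $Q[q]^n_{n-1}$ is designed to produce. The transfinite case is then handled by showing these squares assemble into a projective cofibration in the arrow category, so that taking the colimit (a left Quillen functor) preserves the (trivial) cofibration.

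In short: replace every occurrence of ``(trivial) cofibration smashed with a fixed object'' by either ``directly a pushout of the generating $h^{\boxprod n}/\Sigma_n$'' (pushout case) or ``a pushout product of two maps already known to lie in $\sP$ or $\sP'$'' (composition case), and the argument goes through.
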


We will prove that the class of maps satisfying the condition in the strong commutative monoid axiom is closed under retracts, pushouts, and transfinite compositions. The first two are easy, but the third will require an induction. So we must introduce some new notation, following \cite{harper-operads}. Let $f:X\to Y$ and consider the $n$-dimensional cube in which each vertex is a word of length $n$ on the letters $X$ and $Y$. 

Recall the action of $\Sigma_n$ on the diagram which defines $Q_n$. The vertices of the cube correspond to subsets $D$ of $[n]=\{1,2,\dots,n\}$ where a vertex $C_1\otimes \dots \otimes C_n$ has $C_i = X$ if $i\notin D$ and $C_j = Y$ if $j\in D$. Any $\sigma \in \Sigma_n$ sends the vertex so defined to the vertex corresponding to $\sigma(D) \subset [n]$ using the action of $\Sigma_{|D|}$ on the $X$'s and $\Sigma_{n-|D|}$ on the $Y$'s. Clearly, this action descends to an action on the colimit $Q_n$.

For inductive purposes, we will need to consider subdiagrams whose vertices consist of words with $\leq q$ copies of the letter $Y$. This subdiagram consists of all vertices of distance $\leq q$ from the initial vertex $X^{\otimes n}$. We denote the colimit of this subdiagram by $Q_q^n$, to match the notation of \cite{harper-operads}. The superscript $n$ refers to the fact that this is a subdiagram of the $n$-dimensional cube, so in particular each vertex is a word on $n$ letters.  In particular, $Q_0^n = X^{\otimes n}$, $Q_n^n = Y^{\otimes n}$, and $Q_{n-1}^n$ is the domain of $f^{\boxprod n}$, which we have formerly denoted by $Q_n$. For the purposes of this proof we will now write it as $Q_{n-1}^n(f)$ (or $Q_{n-1}^n$ if the context is clear). 

The induction will make use of the maps of colimits $Q_{q-1}^n \to Q_q^n$ which are induced by inclusion of subdiagram. The $\Sigma_n$ action on the cube clearly preserves the size of the subset $D\subset [n]$ and so it restricts to an action of $\Sigma_n$ on each $Q_q^n$. Because this action is a restriction of the $\Sigma_n$-action on the full cube, the map of colimits $Q_{q-1}^n \to Q_q^n$ is automatically $\Sigma_n$-equivariant. Indeed, the map of colimits $Q_{q-1}^n \to Q_q^n$ can be realized by the following pushout:
\begin{align}
\label{eq:pushout_defining_equivariant_punctured_cube}
\xymatrix{
  \Sigma_n \cdot_{\Sigma_{n-q}\times\Sigma_{q}} X^{\otimes (n-q)} 
  \otimes Q_{q-1}^q \ar[d] \ar[r] \po & Q_{q-1}^n\ar[d]  \\
  \Sigma_n\cdot_{\Sigma_{n-q}\times\Sigma_{q}}X^{\otimes(n-q)}
  \otimes Y^{\otimes q}\ar[r] & Q_q^n
}
\end{align}

where the left vertical map is induced by $f^{\boxprod q}$ (see Section 7 of \cite{harper-operads} and Remark 4.15 of \cite{harper-spectra} for a toy case). 
To explain the notation $\Sigma_n \cdot_{\Sigma_{n-q}\times\Sigma_{q}} (-)$, first note that for any set $G$ and any object $A$, $G\cdot A = \coprod_{g\in G} A$. When $G=\Sigma_n$ this object inherits a $\Sigma_n$ action by permuting the $A^{n!}$ objects in the coproduct. When we write $\Sigma_n \cdot_{\Sigma_k \times \Sigma_q}(-)$ we are quotienting out by the $\Sigma_k \times \Sigma_q$ action on this object in $\M^{\Sigma_n}$. The result is a coproduct with $n! / (k!q!)$ terms because the order of the $k!$ terms to the left of the product (and of the $q!$ terms to the right) do not matter. In particular, applying $\Sigma_n \cdot_{\Sigma_k \times \Sigma_q}(-)$ has the effect of \textit{equivariantly} building in additional layers of the cube. With this notation in hand we proceed to the proof.

\begin{proof}
Let $\sP$ denote the class of cofibrations $f$ for which $f^{\boxprod n}/\Sigma_n$ is also a cofibration. Let $\sP'$ denote the same for trivial cofibrations. We must prove that if $I \subset \sP$ then all cofibrations are in $\sP$ (and the same for $J\subset \sP'$). We will do so by proving the classes $\sP$ and $\sP'$ are closed under retracts, pushouts, and transfinite compositions.

The simplest to verify is closure under retracts, which follows from the fact that $(-)^{\boxprod n}/\Sigma_n$ is a functor on $Arr(\M)$ so if $f$ is a retract of $g$ (with $g\in \sP$ or $\sP'$) then $f^{\boxprod n}/\Sigma_n$ is a retract of $g^{\boxprod n}/\Sigma_n$ and hence a (trivial) cofibration.

We next consider closure under pushouts. Suppose $f:X\to Y$ is a pushout of $g:A\to B$ and $g\in \sP$ or $\sP'$. Then we have a $\Sigma_n$-equivariant pushout diagram
\begin{align*}
\xymatrix{Q_n(g) \ar[r] \ar[d] \po & B^{\otimes n} \ar[d] \\ Q_n(f) \ar[r] & Y^{\otimes n}}
\end{align*}

by Proposition 6.13 in \cite{harper-spectra}. When we pass to $\Sigma_n$-coinvariants we see that $f^{\boxprod n}/\Sigma_n$ is a pushout of $g^{\boxprod n}/\Sigma_n$, e.g. by commuting colimits.
Indeed, for any $X\in \M^{\Sigma_n}$, $X\otimes_{\Sigma_n} f^{\boxprod n}$ is a pushout of $X\otimes_{\Sigma_n} g^{\boxprod n}$. So if the latter is assumed to be a (trivial) cofibration because $g \in \sP$ or $\sP'$ then the former will be as well.

Composition is harder, so we begin with the case of two maps $f:X\to Y$ and $g:Y\to Z$ in $\sP$ or $\sP'$. We will prove that $Q^n_{n-1}(gf)/\Sigma_n \to Z^{\otimes n}/\Sigma_n$ is a (trivial) cofibration. First note that this map factors through $Q^n_{n-1}(g)/\Sigma_n$ and the hypothesis on $g$ guarantees that $Q^n_{n-1}(g)/\Sigma_n \to Z^{\otimes n}/\Sigma_n$ is a (trivial) cofibration. So we must only prove that $Q^n_{n-1}(gf)/\Sigma_n \to Q^n_{n-1}(g)/\Sigma_n$ is a (trivial) cofibration.

We proceed by realizing both colimit diagrams as subdiagrams of the same diagram, which is a $n$-dimensional cube featuring $3^n$ vertices which are words of length $n$ in the letters $X, Y,$ and $Z$. Formally, this cube is an element of the rectangular diagram category $Fun((0 \to 1 \to 2)^{\times n}, \M)$, and every time we write subdiagram we mean with respect to this cube with $3^n$ vertices. The domain $Q^n_{n-1}(gf)$ of the map we care about is the colimit of the $X-Z$ subdiagram, i.e. the punctured cube formed from vertices which are words in $X$ and $Z$, where all maps are compositions $gf$. The codomain $Q^n_{n-1}(g)$ of the map we care about is the colimit of the $Y-Z$ subdiagram, i.e. the punctured cube formed from vertices which are words in $Y$ and $Z$. So we must again introduce new notation to build this map one step at a time. 

The induction will proceed by moving through the rectangle by adding a single $\Sigma_n$-orbit at a time. So we will need to consider $\Sigma_n$-equivariant subdiagrams of the rectangle which contain the $X-Z$ punctured cube and which contain a new vertex $e$ (and hence its entire $\Sigma_n$-orbit). 

In order to build this new vertex into the colimit we will also need to consider the subdiagram of the $X-Y-Z$ box which maps to $e$ (but which does not include $e$ itself). This is collection of vertices sitting under $e$ (i.e. of distance strictly less than $e$ from the initial vertex). As with $e$, we wish to consider the $\Sigma_n$-orbit of this subdiagram, which is equivalently described as all vertices sitting under any vertex in the orbit of $e$. Now that we have a picture of the subdiagram in mind, we denote the colimit of this subdiagram by $Q_e$. By construction there is an induced $\Sigma_n$-equivariant map $Q_e \to e$.

We are now ready to consider the diagrams formed when we adjoin the $Q_e$-diagram with the $X-Z$ punctured cube. Let $Q[0]^n_{n-1}=Q^n_{n-1}(gf)$ denote the colimit of the $X-Z$ punctured cube. Let $Q[1]^n_{n-1}$ denote the colimit of the subdiagram containing the $X-Z$ punctured cube, the orbit of the vertex $e = Y\otimes Z^{\otimes (n-1)}$, and the vertices in the $Q_e$ subdiagram. Continue inductively, by adding $e = Y^{\otimes q}\otimes Z^{\otimes (n-q)}$ and vertices below it to the $Q[q-1]^n_{n-1}$-diagram to get the $Q[q]^n_{n-1}$-diagram. 
This process terminates with the whole $X-Y-Z$ punctured cube whose $3^n-1$ vertices contain all words in $X,Y,Z$ except the word $Z^{\otimes n}$. The colimit of this diagram is denoted $Q[n]^n_{n-1}$. A cofinality argument shows that this colimit is equal to $Q^n_{n-1}(g)$, because all factors of $X$ which appear are mapped to a factor of $Y$ in the subdiagram and so do not affect the colimit.

The induction will proceed along the maps $Q[q-1]^n_{n-1} \to Q[q]^n_{n-1}$ induced by containments of subdiagrams. This induction can be thought of as stepping through shells in the cube of increasing distance from the initial vertex $X^{\otimes n}$ until the information from the entire diagram has been built into the colimit.

Because each step $Q[q-1]^n_{n-1} \to Q[q]^n_{n-1}$ builds in the information of one new vertex (and its orbit under the $\Sigma_n$ action on the cube), we may apply Proposition A.4 from \cite{luis} with $e=Y^{\otimes q} \otimes Z^{\otimes (n-q)}$ to write the following pushout diagram:
\begin{align} \label{eq:defining-for-Q[q]-to-Q[q+1]}
\xymatrix{
  \Sigma_n \cdot_{\Sigma_{q}\times\Sigma_{n-q}} Q_{e} \ar[d] \ar[r] \po & Q[q]_{n-1}^n \ar[d] \\
  \Sigma_n\cdot_{\Sigma_{q}\times\Sigma_{n-q}}Y^{\otimes q}
  \otimes Z^{\otimes (n-q)}\ar[r] & Q[q+1]_{n-1}^n
}
\end{align}

The left vertical map is induced by $Q_{e} \to Y^{\otimes q} \otimes Z^{\otimes (n-q)}$ and this is in turn induced by $f^{\boxprod q} \boxprod g^{\boxprod (n-q)}$ because 
\begin{align}
\label{eq:cube-decomposition-for-composition}
Q_e \cong Y^{\otimes q} \otimes Q^{n-q}_{n-q-1}(g) \coprod_{Q_{q-1}^q(f) \otimes Q_{n-q-1}^{n-q}(g)} Q_{q-1}^q(f) \otimes Z^{\otimes (n-q)}
\end{align}

To see that the diagram defining $Q_e$ decomposes into a gluing of the diagrams defining $Q_{q-1}^q(f)\otimes Z^{n-q}$ and $Y^q \otimes Q_{n-q-1}^{n-q}(g)$ along the diagram defining $Q_{q-1}^q(f) \otimes Q_{n-q-1}^{n-q}(g)$, note that every $X$ in the $Q_e$ diagram gets mapped to a $Y$ in the $Q_e$ diagram and so does not affect the colimit. This is the reason why we insisted upon including the vertices under $e$ in our construction of the diagram defining $Q_e$. Furthermore, every $Z$ in the $Q_e$ diagram is the image of some $Y$ and so we may apply a cofinality argument to realize that any map out of the diagram for the left-hand side of (\ref{eq:cube-decomposition-for-composition}) must factor through the right-hand side, which completes the proof of (\ref{eq:cube-decomposition-for-composition}).

Now pass to $\Sigma_n$-coinvariants in Equation (\ref{eq:defining-for-Q[q]-to-Q[q+1]}). Verifying that the left vertical map is a cofibration reduces to verifying that $f^{\boxprod q}/\Sigma_q \boxprod g^{\boxprod (n-q)}/\Sigma_{n-q}$ is a cofibration. This in turn follows from the inductive hypothesis on $f$ and $g$. Thus all the maps $Q[q]^n_{n-1}/\Sigma_n \to Q[q+1]^n_{n-1}/\Sigma_n$ are pushouts of cofibrations and hence are cofibrations themselves. Hence, their composite $Q^n_{n-1}(gf)/\Sigma_n \to Q^n_{n-1}(g)/\Sigma_n$ is a cofibration. This completes the proof that the classes $\sP$ and $\sP'$ are closed under composition.

Finally, we cover the case of transfinite composition. First note that the proof for the composition of two maps proves that the vertical maps 
and the induced pushout corner map in the following square 
become cofibrations after passing to $\Sigma_n$-coinvariants, by the general machinery of adding a new vertex $e$ containing only $Y$s and $Z$s:
\begin{align*}
\xymatrix{Q_{t-1}^n(f) \ar[r] \ar[d] & Q_{t-1}^n(gf) \ar[d] \\ Q^n_t(f) \ar[r] & Q^n_t(gf)}
\end{align*}

Indeed, the same is true of the diagram 
\begin{align*}
\xymatrix{Q^n_{n-1}(f) \ar[r] \ar[d] & Q_{n-1}^n(gf) \ar[d] \\ Y^{\otimes n} \ar[r] & Z^{\otimes n}}
\end{align*}

This is the analogous result to Corollary A.7 in \cite{luis}, which begins with power cofibrations and concludes that the diagram represents a projective cofibration in Arr$(\M^{\Sigma_n})$. Recall, e.g. from Definition 2.1 in \cite{obstruction} that a square is a projective cofibration if and only if the vertical maps and the pushout corner map are cofibrations. In our situation we pass to $\Sigma_n$-coinvariants on the diagram level and in that way achieve a projective cofibration in Arr$(\M)$

Now let $X_0\stackrel{f_0}{\to}, X_1 \stackrel{f_1}{\to}, X_2 \stackrel{f_2}{\to}, \dots$ be a $\lambda$-sequence in which each $f_\alpha \in \sP$. Let $f_\infty:X_0\to X_\lambda$ be the composite. To prove that $f_\infty ^{\boxprod n}/\Sigma_n$ is a cofibration, we realize this map as the colimit of a particular diagram. Because colimits commute we can pass to $\Sigma_n$-coinvariants in the diagram and we will see that the colimit of the resulting diagram (which will be $f_\infty ^{\boxprod n}/\Sigma_n$) will be a cofibration. First we realize the domain of $f_\infty ^{\boxprod n}/\Sigma_n$ as a colimit along the sequence $Q^n_{n-1}(f_0) \to Q^n_{n-1}(f_1f_0) \to Q^n_{n-1}(f_2f_1f_0) \to \dots Q^n_{n-1}(f_\infty)$. Next, we realize $f_\infty ^{\boxprod n}$ as the far right-hand map in
\begin{align}
\label{eq:Qn-of-transfinite-composition}
\xymatrix{Q^n_{n-1}(f_0) \ar[r] \ar[d] & Q^n_{n-1}(f_1f_0) \ar[r] \ar[d] & Q^n_{n-1}(f_2f_1f_0)\ar[r] \ar[d] & \dots \ar[r] & Q^n_{n-1}(f_\infty) \ar[d] \\
X_0^{\otimes n} \ar[r] & X_1^{\otimes n} \ar[r] & X_2^{\otimes n} \ar[r] & \dots \ar[r] & X_\lambda^{\otimes n}}
\end{align}

As in the case for two-fold composition, we pass to $\Sigma_n$-coinvariants in this diagram and realize that the resulting diagram is a projective cofibration in the category of $\lambda$-sequences $\M^\lambda$ because all vertical maps and all pushout corner maps are cofibrations. The colimit of such a diagram must be a cofibration, because colimit is a left Quillen functor from $\M^\lambda \to \M$. This proves that $f_\infty ^{\boxprod n}/\Sigma_n$ is a (trivial) cofibration as desired.

\end{proof}

\begin{remark}
The author is indebted to Luis Pereira for many helpful conversations as this proof was worked out. The author's original proof proceeded by constructing a lift to prove that $Q^n_{n-1}(gf)/\Sigma_n \to Q^n_{n-1}(g)/\Sigma_n$ has the left lifting property with respect to all (trivial) fibrations. This proof comes down to constructing an equivariant lift at the level of the cube diagrams, 
and it appears something similar has been done by \cite{gorchinskiy-symmetrizable},
though we find the proof presented here conceptually simpler. In \cite{luis}, Pereira uses a similar proof to prove that it is sufficient to check Jacob Lurie's axiom on the generating (trivial) cofibrations, at least in the case when the domain $X$ of $f$ is cofibrant. Pereira in fact proves something more general about the intermediate maps $Q[q-1]\to Q[q]$. The proof presented here avoids the need for $X$ to be cofibrant, even in Pereira's situation of working with Lurie's axiom rather than the strong commutative monoid axiom.
\end{remark}

We conclude this appendix by recording a result related to the proof above which is used in Section \ref{subsec:left-proper} and in the companion paper \cite{white-localization}:

\begin{lemma} \label{lemma:boxprod-equiv-if-sym}
Assume that for every $g\in I$, $g^{\boxprod n}/\Sigma_n$ is a cofibration. Suppose $f:X\to Y$ is a trivial cofibration between cofibrant objects and $f^{\boxprod n}/\Sigma_n$ is a cofibration for all $n$. Then $f^{\boxprod n}/\Sigma_n$ is a trivial cofibration for all $n$ if and only if $\Sym^n(f)$ is a trivial cofibration for all $n$.
\end{lemma}

\begin{proof}
We have seen above that if all maps $g$ in $I$ (resp. $J$) have the property that $g^{\boxprod n}/\Sigma_n$ is a (trivial) cofibration, then the same holds for all (trivial) cofibrations $g$. Thus, the hypothesis implies that the class of cofibrations is closed under the operation $(-)^{\boxprod n}/\Sigma_n$. 

Recall the construction of the $\Sigma_n$-equivariant maps $Q_{q-1}^n \to Q_q^n$ from our proof above:
\begin{align}
\label{eq:pushout_defining_equivariant_punctured_cube}
\xymatrix{
  \Sigma_n \cdot_{\Sigma_{n-q}\times\Sigma_{q}} X^{\otimes (n-q)} 
  \otimes Q_{q-1}^q \ar[d] \ar[r] \po & Q_{q-1}^n\ar[d]  \\
  \Sigma_n\cdot_{\Sigma_{n-q}\times\Sigma_{q}}X^{\otimes(n-q)}
  \otimes Y^{\otimes q}\ar[r] & Q_q^n
}
\end{align}

Observe that the pushout diagram above remains a pushout diagram if we apply $(-)/\Sigma_n$ to all objects and morphisms in the diagram, because $(-)/\Sigma_n$ is a left adjoint and so commutes with colimits. We obtain the diagram
\begin{align}
\label{eq:pushout_defining_equivariant_after_coinvariants}
\xymatrix{
  \Sym^{n-q} (X) \otimes Q_{q-1}^q /\Sigma_q \ar[d] \ar[r] \po & Q_{q-1}^n /\Sigma_n \ar[d]  \\
  \Sym^{n-q}(X) \otimes \Sym^q(Y)\ar[r] & Q_q^n /\Sigma_n
}
\end{align}

We have assumed $X$ is cofibrant, so $\Sym^k(X)$ is cofibrant for all $k$ by the hypothesis of the lemma applied to the cofibration $g:\emptyset \to \Sym^k(X)$. Thus, the left vertical map above is a trivial cofibration as soon as $f^{\boxprod q}$ is a trivial cofibration, by the pushout product axiom. 

We are now ready to prove the forwards direction in the lemma. Fix $n$ and realize $\Sym^n(f)$ as a composite of maps $Q_{q-1}^n/\Sigma_n \to Q_q^n/\Sigma_n$ as above, where $Q_0^n = X^{\otimes n}$ and $Q_n^n = Y^{\otimes n}$. Assume $f^{\boxprod q}$ is a trivial cofibration for all $q$ and deduce that each $Q_{q-1}^n/\Sigma_n \to Q_q^n/\Sigma_n$ is a trivial cofibration, because trivial cofibrations are closed under pushout. Furthermore, because trivial cofibrations are closed under composite, this proves $\Sym^n(f)$ is a trivial cofibration.

To prove the converse, assume that $\Sym^k(f)$ is a trivial cofibration for all $k$. We will prove $f^{\boxprod n}/\Sigma_n$ is a trivial cofibration for all $n$ by induction. For $n=1$ the map is $f$, which we have assumed to be a trivial cofibration. Now assume $f^{\boxprod i}/\Sigma_i$ is a trivial cofibration for all $i<n$. As in the proof of Lemma \ref{lemma:generators-suffice} we may again prove $f^{\boxprod n}/\Sigma_n$ is a trivial cofibration via the filtration in diagram (\ref{eq:pushout_defining_equivariant_after_coinvariants}). By our inductive hypothesis, we know that for all $i<n$, $Q^n_{i-1}/\Sigma_n \to Q^n_i /\Sigma_n$ is a trivial cofibration. We therefore have a composite:
\begin{align*}
\Sym^n(X)=Q^n_0/\Sigma_n \to Q^n_1/\Sigma_n \to \dots \to Q^n_{n-1}/\Sigma_n \to Q^n_n/\Sigma_n = \Sym^n(Y)
\end{align*}

in which each map except the last is a trivial cofibration. However, we have assumed $\Sym^n(X)\to \Sym^n(Y)$ is a trivial cofibration, so by the two out of three property the map $Q^n_{n-1}/\Sigma_n \to Q^n_n/\Sigma_n$ is in fact a weak equivalence. This map is $f^{\boxprod n}/\Sigma_n$, and is a cofibration by hypothesis, so it is a trivial cofibration. This completes the induction.

\end{proof}

\section{Proof of Main Theorem} \label{appendix-proof}

As described in Section \ref{sec:main}, it is sufficient to prove the statements of Theorem \ref{thm:commMonModel} and Proposition \ref{prop:cofibrations-forget} for the case $R=S$ of commutative monoids in $\M$. Before proceeding to the proof, we fix some notation. Given a map $g:K\to L$ one can form $g^{\boxprod n}:\overline{Q_n}\to L^{\otimes n}$. This map is a (trivial) cofibration if $g$ is such, by the pushout product axiom. The domain and codomain both have an action of $\Sigma_n$. Modding out by this action gives a map which is denoted by $f^{\boxprod n}/\Sigma_n:Q_n/\Sigma_n \to \Sym^n(L) = L^{\otimes n}/\Sigma_n$.

The proofs of Theorem \ref{thm:commMonModel} and Proposition \ref{prop:cofibrations-forget} follow the proof in \cite{SS00} that $\Mon(\M)$ has a model structure inherited from $\M$. Because that proof is based on the general theory of monads (c.f. Lemma 2.3) it will go through verbatim if Lemma 6.2 in \cite{SS00} can be generalized to describe pushouts in $\CMon(\M)$ rather than in $\Mon(\M)$. We state the analogue to Lemma 6.2:

\begin{lemma} \label{lemma:appendix-main}
\begin{enumerate}
\item If $\M$ satisfies the commutative monoid axiom then in the category $\CMon(\M)$, $\Sym(J)$-cell is contained in the collection of maps of the form $(id_Z\otimes J)$-cell in $\M$. If in addition $\M$ satisfies the monoid axiom then these maps are weak equivalences in $\M$ and hence in $\CMon(\M)$.

\item If $\M$ satisfies the strong commutative monoid axiom then maps in $\Sym(I)-cell$ with cofibrant domain (in $\M$) are cofibrations in $\M$.

\end{enumerate}
\end{lemma}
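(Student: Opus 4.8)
The plan is to prove the commutative analogue of Lemma 6.2 of \cite{SS00} by constructing an explicit filtration of the pushout maps in $\CMon(\M)$, after which both parts of the lemma follow by essentially formal arguments. Fix a map $h\colon K\to L$ in $\M$ (to be taken in $J$ for part (1) and in $I$ for part (2)) and form a pushout in $\CMon(\M)$
\begin{align*}
\xymatrix{\Sym(K)\ar[r]\ar[d] & \Sym(L)\ar[d] \\ X\ar[r] & P}
\end{align*}
The central task is to build a $\lambda$-sequence $X=P_0\to P_1\to\cdots$ of commutative monoids with $\colim_n P_n=P$ in which each $P_{n-1}\to P_n$ is, on underlying objects, a pushout in $\M$ of $id_X\otimes(h^{\boxprod n}/\Sigma_n)\colon X\otimes Q_n/\Sigma_n\to X\otimes L^{\otimes n}/\Sigma_n$, where $Q_n$ is the colimit of the punctured $n$-cube on the letters $K,L$ described in Section \ref{sec:main}. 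Two features of the commutative setting that must be handled carefully are that $\Sym(K)\to X$ is adjoint to a map $K\to X$, so that only cubes whose vertices are words in $K$ and $L$ (tensored with a single outer copy of $X$) are needed, and that $P$ is obtained as the colimit of this filtration rather than directly as a relative tensor product.

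For the construction of the filtration I would follow Harper's treatment of pushouts along free operad maps (Section 7 of \cite{harper-operads}) specialised to $Com$ via the enveloping operad $Com_X$: his machinery presents $P_{n-1}\to P_n$ as the pushout of $Com_X[n]\otimes_{\Sigma_n}Q_n\to Com_X[n]\otimes_{\Sigma_n}L^{\otimes n}$, and Proposition 7.6 of \cite{harper-operads} identifies $Com_X[n]\cong X$ with the trivial $\Sigma_n$-action, so that passing to $\Sigma_n$-coinvariants yields exactly the displayed pushout of $id_X\otimes(h^{\boxprod n}/\Sigma_n)$, as in Remark \ref{remark-enveloping-computations}. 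That $Q_n$ really is the colimit of the punctured cube, $\Sigma_n$-equivariantly, rests on the equivariant pushout decomposition of $Q_{q-1}^n\to Q_q^n$ recalled in Appendix \ref{appendix-generators}, see \eqref{eq:pushout_defining_equivariant_punctured_cube}; and that $P_0=X$ with $\colim_n P_n=P$ is a cofinality and colimit-interchange argument, since every formal product of letters from $X$ and $L$ modulo $K$-relations involves only finitely many letters from $L$ and so lies in some $P_n$, while $\colim_n P_n$ has the same universal property in $\CMon(\M)$ as the pushout $P$. I expect this step to be the \emph{main obstacle}: all of the cube combinatorics and the bookkeeping of the $\Sigma_n$-actions live here, and the delicate point is to identify the maps $\Sigma_n$-equivariantly before, rather than after, passing to coinvariants.

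Granting the filtration, part (1) is immediate. Assuming the commutative monoid axiom and $h\in J$, each $h^{\boxprod n}/\Sigma_n$ is a trivial cofibration, so $id_X\otimes(h^{\boxprod n}/\Sigma_n)$ lies in $\M\otimes(\text{trivial cofibrations})$ and the filtration exhibits $X\to P$ as a transfinite composite of pushouts of such maps; a general map in $\Sym(J)$-cell is a transfinite composite of such maps $X\to P$ and hence is again a transfinite composite of pushouts of maps in $\M\otimes(\text{trivial cofibrations})$, which is the assertion encoded by the notation ``$(id_Z\otimes J)$-cell''. If in addition the monoid axiom holds, any such map is a weak equivalence in $\M$ by definition of that axiom, hence a weak equivalence in $\CMon(\M)$ once Lemma \ref{ss00-lemma2.3} produces the model structure there (whose weak equivalences are created by the forgetful functor).

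For part (2), assume the strong commutative monoid axiom and $h\in I$, so that each $h^{\boxprod n}/\Sigma_n$ is a cofibration in $\M$. Let $f\colon X\to Y$ lie in $\Sym(I)$-cell with $X$ cofibrant in $\M$, and write $f$ as a transfinite composite $X=Y_0\to Y_1\to\cdots$ with each $Y_\beta\to Y_{\beta+1}$ a pushout of some $\Sym(k_\beta)$, $k_\beta\in I$. I would show by transfinite induction that every $Y_\beta$ is cofibrant in $\M$ and every $Y_\beta\to Y_{\beta+1}$ is a cofibration in $\M$: at a successor stage, applying the filtration to $Y_\beta\to Y_{\beta+1}$, each step $P_{m-1}\to P_m$ is a pushout of $id_{Y_\beta}\otimes(h^{\boxprod m}/\Sigma_m)=(\emptyset\hookrightarrow Y_\beta)\boxprod(h^{\boxprod m}/\Sigma_m)$, which is a cofibration by the pushout product axiom since $Y_\beta$ is cofibrant and $h^{\boxprod m}/\Sigma_m$ is a cofibration; so $Y_\beta\to Y_{\beta+1}$ is a transfinite composite of cofibrations, hence a cofibration, and $Y_{\beta+1}$ is cofibrant by composing $\emptyset\hookrightarrow Y_\beta\to Y_{\beta+1}$; limit stages use closure of cofibrations under transfinite composition. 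Composing over the whole $\lambda$-sequence shows $f$ is a cofibration in $\M$. Finally, Proposition \ref{prop:cofibrations-forget} follows from this for $R=S$ because a cofibration in $\CMon(\M)$ with source cofibrant in $\M$ is a retract of a relative $\Sym(I)$-cell complex on that source, and the case of general $R$ is obtained by tensoring the argument with $R$, using that the generating cofibrations of $CAlg(R)$ are $R\otimes\Sym(I)$ (Remark \ref{remark-enveloping-computations}).
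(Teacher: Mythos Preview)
Your proposal is correct and the deductions of (1) and (2) from the filtration match the paper's proof almost verbatim: the commutative monoid axiom makes each $h^{\boxprod n}/\Sigma_n$ a trivial cofibration so that $X\to P$ lies in $(id_Z\otimes J)$-cell and the monoid axiom finishes; and for (2) the strong commutative monoid axiom plus cofibrancy of the source gives that each $id_X\otimes(h^{\boxprod n}/\Sigma_n)$ is a cofibration via the pushout product axiom, whence the composite is a cofibration by closure under pushout and transfinite composition.

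The one genuine difference is in how you obtain the filtration itself. You propose to import it from Harper's general enveloping-operad machinery, specialising $Com_X[n]\cong X$ via Proposition~7.6 of \cite{harper-operads}; the paper instead proves the filtration directly as a standalone proposition (Proposition~\ref{prop:comm-mon-pushout}), constructing the attaching maps $X\otimes Q_n/\Sigma_n\to P_{n-1}$ by hand, endowing $\colim_n P_n$ with an explicit commutative monoid structure, and then verifying the universal property of the pushout in $\CMon(\M)$. Your route is shorter and conceptually cleaner if one is willing to take Harper's filtration as a black box (and the paper itself acknowledges this equivalence in Remark~\ref{remark-enveloping-computations}); the paper's route is self-contained and avoids the overhead of enveloping operads, at the cost of several pages of cube bookkeeping and monoid-axiom verifications. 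Either way the ``main obstacle'' you flag is real, and the paper spends most of Appendix~\ref{appendix-proof} on exactly that point.
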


As in \cite{SS00}, the proof of this proposition requires a careful analysis of the filtration on pushouts in the category of commutative monoids. In particular, we must prove the following.

\begin{proposition} \label{prop:comm-mon-pushout}
Given any map $h:K\to L$ in $\M$, consider the commutative monoid homomorphism $X\to P$ formed by the following pushout in $\CMon(\M)$
\begin{align*}
\xymatrix{\Sym(K) \ar[r] \po \ar[d] & \Sym(L) \ar[d] \\ X \ar[r] & P}
\end{align*}

This map $X\to P$ factors as $X=P_0\to P_1\to \dots \to P$, where each $P_{n-1}\to P_n$ is defined by the following pushout in $\M$
\begin{align*}
\xymatrix{X\otimes Q_n/\Sigma_n \ar[r] \po \ar[d] & X\otimes \Sym^n(L) \ar[d] \\ P_{n-1} \ar[r] & P_n}
\end{align*}

Here $Q_n = Q^n_{n-1}$ denotes the colimit of the $n$-dimensional punctured cube discussed in Appendix \ref{appendix-generators}.
\end{proposition}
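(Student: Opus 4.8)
The plan is to follow the proof of Lemma 6.2 in \cite{SS00}, reorganized through the enveloping-operad filtration of \cite{harper-operads} and \cite{harper-spectra}. The guiding picture is that $P$ is the commutative monoid of ``formal products'' consisting of a factor of $X$ together with finitely many letters drawn from $L$, subject to the relation identifying, for each letter of $K$, its image under $h$ in $L$ with its image under $\Sym(K)\to X$ in $X$; the stage $P_n$ should consist of those products in which at most $n$ letters from $L$ appear, all letters of $K$ having been absorbed into the $X$-factor via the algebra structure map. Since $\Sym(K)\to X$ is adjoint to a map $K\to X$, describing the $n$-th stage requires only $n$-letter words in the two letters $K$ and $L$ --- this is why the punctured $n$-cube of Appendix~\ref{appendix-generators} enters. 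I would set $P_0=X$ and build $P_{n-1}\to P_n$ as the pushout in the statement.

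First I would construct the two maps out of $X\otimes Q_n/\Sigma_n$. The map to $X\otimes\Sym^n(L)$ is just $\mathrm{id}_X\otimes(h^{\boxprod n}/\Sigma_n)$. For the attaching map $X\otimes Q_n/\Sigma_n\to P_{n-1}$, I would use that every vertex of the punctured $n$-cube is a word $C_1\otimes\cdots\otimes C_n$ with at least one $C_i=K$: absorbing the $K$-letters into the $X$-factor via the iterated multiplication of $X$ (which is coherent with the cube maps, since those are induced coordinatewise by $h$, and with the $\Sigma_n$-action) yields a cone from the punctured cube to the diagram $\{X\otimes\Sym^k(L)\}_{k<n}$ already built into $P_{n-1}$; applying $X\otimes(-)$ and then $(-)/\Sigma_n$, which is a left adjoint and so commutes with the relevant colimits and pushouts, gives the map. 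The clean way to organize this is to invoke Harper's formula directly: $P_{n-1}\to P_n$ is the pushout of $Com_X[n]\otimes_{\Sigma_n}Q_n\to Com_X[n]\otimes_{\Sigma_n}L^{\otimes n}$, and Proposition 7.6 of \cite{harper-operads} identifies $Com_X[n]=X$ with trivial $\Sigma_n$-action, whence $Com_X[n]\otimes_{\Sigma_n}(-)=X\otimes\bigl((-)/\Sigma_n\bigr)$, reproducing the displayed pushout. The orbit-by-orbit pushouts relating the layers $Q^n_{q-1}\to Q^n_q$ from Appendix~\ref{appendix-generators}, equation~\eqref{eq:pushout_defining_equivariant_punctured_cube}, are what make the cone above well defined one $\Sigma_n$-orbit at a time.

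Next I would verify that $P=\colim_n P_n$, and that $P_0=X$ with the composite $X=P_0\to P_1\to\cdots$ being the canonical map to $P$. This I would do by unwinding the universal property of the pushout in $\CMon(\M)$: a commutative-monoid map out of $\colim_n P_n$ is a compatible system of maps out of the $P_n$, which amounts to a map $X\to T$ together with a map $L\to T$ (into the last $L$-slot) whose two induced maps $\Sym(K)\to T$ coincide --- precisely a map $P\to T$; conversely $X$ and $\Sym(L)$ map compatibly into $\colim_n P_n$. Everything else (that $P_0=X$, that the maps compose correctly) is then formal.

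The hard part will be the bookkeeping behind the previous two paragraphs: showing that the ``absorb the $K$-letters'' cone is genuinely well defined and $\Sigma_n$-equivariant, and confirming that the colimit of the $P_n$ imposes exactly the defining relations of $P$ --- no fewer and no more. In the associative case this is the subtle core of Lemma 6.2 in \cite{SS00}; here one must additionally track $\Sigma_n$-coinvariants throughout, and it is precisely at this point that leaning on Harper's identification $Com_X[n]=X$ and on the orbit-by-orbit description of the punctured cube from Appendix~\ref{appendix-generators} replaces a delicate hands-on argument with a bookkeeping-free one.
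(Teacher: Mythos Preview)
Your proposal is correct, but it takes a genuinely different route from the paper's proof. The paper does everything by hand: it constructs the attaching maps $X\otimes Q_n/\Sigma_n\to P_{n-1}$ vertex by vertex (applying $g$ to each $K$-factor, shuffling the resulting $X$'s to the left, multiplying), then checks explicitly that these maps are compatible with the $\Sigma_n$-action on the cube and with the edge maps, and then verifies directly that the colimit $P=\colim_n P_n$ is a commutative monoid (building the multiplication $P_n\otimes P_m\to P_{n+m}$ from a product-of-pushouts square and checking the unit, associativity, and commutativity diagrams one at a time), that $X\to P$ is a monoid map, and that $P$ has the universal property of the pushout in $\CMon(\M)$. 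You instead invoke Harper's enveloping-operad filtration as a black box, together with the identification $Com_X[n]=X$ with trivial $\Sigma_n$-action, to produce the displayed pushouts and to know in advance that their colimit is the pushout in $\CMon(\M)$; the hands-on construction of the attaching cone is then only an informal gloss on what Harper's machinery is doing.

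Both approaches are valid. The paper's is self-contained and elementary (it does not appeal to any general operadic filtration), at the cost of several pages of explicit diagram-checking; the author remarks that this choice was made with an eye toward generalization. Your approach is much shorter but imports the full strength of \cite{harper-operads}; in particular, once you cite Harper's filtration, your separate verification of the universal property is redundant, since that is exactly the content of his result. If you were to drop the Harper citation and carry out the proof only along the lines you sketch, note that you would still owe the construction of the commutative-monoid structure on $\colim_n P_n$ before you can speak of ``a commutative-monoid map out of $\colim_n P_n$''; the paper spends most of its effort on precisely that point.
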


This filtration is analogous to the one given in \cite{SS00}, and makes use of the decomposition $\Sym(-) = \oplus \Sym^n(-)$. The map $g:K\to X$ needed for the construction of $P_{n-1}\to P_n$ is adjoint to the map $\Sym(K)\to X$. Note that this description of $P_n$ is significantly simpler than the one found in \cite{SS00} because commutativity means one need not consider words with $X$'s, $K$'s, and $L$'s interspersed. Rather, all the $X$'s can be shuffled to the left and multiplied at the beginning of the process, rather than at the end as is done in \cite{SS00}. It is for this reason that Theorem \ref{thm:left-proper} only requires the hypothesis that $\M$ be $h$-monoidal rather than strongly $h$-monoidal as is required for monoids. If we were to keep our notation in line with the notation in \cite{SS00} then what we call $Q_n$ would be denoted $\overline{Q_n}$, but we will avoid this unnecessary shift in notation, because we will have no need for colimits of cubes formed from words in the letters $X,K,L$.

Once we prove this proposition, we will restrict attention to the case when $h=j$ is a trivial cofibration to prove the first statement in Lemma \ref{lemma:appendix-main} and we will restrict to when $h=i$ is a cofibration and $X$ is cofibrant for the second statement. This is done at the end of the section.

\begin{proof}[Proof of Proposition \ref{prop:comm-mon-pushout}]

We begin by describing the left vertical map in the diagram which defines the maps $P_{n-1}\to P_n$. This will be done inductively. Because $X\otimes -$ commutes with colimits (since it's a left-adjoint), the map $X\otimes Q_n/\Sigma_n \to P_{n-1}$ may be defined componentwise on the vertices of the cube defining $X\otimes Q_n$.

For the $n=1$ case the map $X\otimes K\to X\otimes X \to X=P_0$ is $g$ followed by $\mu_X:X\otimes X\to X$. Let $D$ be a proper subset of $[n]=\{1,\dots,n\}$ and define $W(D) = C_1\otimes \dots \otimes C_n$ where $C_i=K$ if $i\not \in D$ and $C_i=L$ if $i\in D$. These are the vertices of the cube defining $Q_n$. Given a vertex $X\otimes W(D)$ define a map by first applying $g$ to all factors of $K$ (call this map $g^*$), then shuffling all the factors of $X$ so obtained to the left by a permutation $\sigma_D$, then multiplying these factors together. This map takes $X\otimes W(D)$ to $X\otimes L^{\otimes |D|}$ and hence to $X\otimes \Sym^{|D|}(L)$ by passing to $\Sigma_n$-coinvariants. Induction then gives a map to $P_{|D|}$ and hence to $P_{n-1}$ because $D$ was a proper subset of $[n]$.

The map above is well-defined (i.e. respects the $\Sigma_n$ action on the cube defining $X\otimes Q_n$) because a permutation $\sigma$ which takes $W(D)$ to a different vertex $W(T)$ for some $T$ of the same size as $D$ yields the following commutative diagram:
\begin{align*}
\xymatrix{X\otimes W(D) \ar[r] \ar[dd]^{1\otimes \sigma} & X^{\otimes (n-|D|)} \otimes L^{\otimes |D|} \ar[r] \ar@{..>}[dd] & X\otimes L^{\otimes |D|} \ar[dr]& \\
& & & X\otimes L^{\otimes |D|}/\Sigma_n\\
X\otimes W(T) \ar[r] & X^{\otimes (n-|D|)}\otimes L^{\otimes |D|} \ar[r] & X\otimes L^{\otimes |D|} \ar[ur] & }
\end{align*}

The left square commutes because the top left horizontal map is $\sigma_D \circ g^*$ and the bottom left horizontal map is $\sigma_T \circ g^*$, so the dotted arrow can be defined as $\sigma|_{D}$ on the $|D|$ factors of $L$ and as $\sigma|_{[n]-D}$ on the $n-|D|$ factors of $X$ (using the fact that $X$ is commutative). Thus, both ways of going around are simply doing $g^*, \sigma,$ and the shuffling of $X$'s to the left. The right pentagon commutes because $X$ is commutative (so the order of factors doesn't matter) and because passage to $\Sigma$-coinvariants means the order of factors of $L$ does not matter either.

These maps from vertices assemble to a map from $X\otimes Q_n \to P_{n-1}$ because taking $i\not \in D$ and defining the map from $X\otimes W(D\cup \{i\})\to P_{n-1}$ as above gives a diagram, which we will show commutes:
\begin{align*}
\xymatrix{X\otimes W(D) \ar[r] \ar[d] & X\otimes L^{\otimes |D|} \ar[r] & P_{|D|}\ar[d]\\
X\otimes W(D\cup \{i\}) \ar[r] & X\otimes L^{\otimes (|D|+1)} \ar[r] & P_{|D|+1}}
\end{align*}

The upper left horizontal map is $\mu_X \circ \sigma_D \circ g^*$ so we may factor it as $X\otimes W(D) \to X^{\otimes (n-|D|-1)}\otimes K\otimes L^{\otimes |D|} \to X\otimes L^{\otimes |D|}$ where $K$ is the $i^{th}$ factor of the original $W(D)$. Since this factor becomes an $L$ in the bottom row we have the following diagram:
\begin{align*}
\xymatrix{X\otimes W(D) \ar[r] \ar[d] & X\otimes K\otimes L^{\otimes |D|} \ar[r] \ar[d] & P_{|D|}\ar[d]\\
X\otimes W(D\cup \{i\}) \ar[r] & X\otimes L^{\otimes (|D|+1)} \ar[r] & P_{|D|+1}}
\end{align*}

The difference between the two ways of going around the left-hand square is the order of factors in the $L$ component (the order in the $X$ component doesn't matter). Thus, this square will commute upon passage to $P_{|D|+1}$ because of passage to $\Sigma_{|D|+1}$-coinvariants. Recall that $P_{|D|+1}$ is a pushout of $X\otimes Q_{|D|+1}$, which is itself a pushout of vertices $X\otimes W(R)$. Because a pushout of a pushout is again a pushout, the right-hand square commutes by the basic property of pushouts.

This completes the inductive definition of $P_n$. Setting $P$ to be the colimit of the $P_n$ (taken in $\M$) completes the analysis. Note that in \cite{SS00} the pushout is the free product of $T(L)$ and $X$ over $T(K)$ for the free monoid functor $T$, whereas in the commutative setting $P$ is the (conceptually simpler) tensor product $\Sym(L)\otimes_{\Sym(K)}X$. In an analogous way to the versions of these statements in \cite{SS00} we now prove
\begin{enumerate}
\item $P$ is naturally a commutative monoid.
\item $X\to P$ is a map of commutative monoids.
\item $P$ has the universal property of the pushout in the category of commutative monoids.
\end{enumerate}

As in \cite{SS00} the unit for $P$ is the map $S\to X\to P$ and the multiplication on $P$ is defined from compatible maps $P_n\otimes P_m \to P_{n+m}$ by passage to the colimit. These maps are defined inductively using the following pushout diagram (which is simply the product of the two pushout diagrams defining $P_n$ and $P_m$), where for spacing reasons we let $\tilde{Q_n}$ denote $Q_n/\Sigma_n$ and let $\tilde{L^n}$ denote $L^{\otimes n}/\Sigma_n$:
\begin{align*}
\xymatrix{(X\otimes \tilde{Q_n})\otimes (X\otimes \tilde{L^m}) \coprod_{(X\otimes \tilde{Q_n}) \otimes (X\otimes \tilde{Q_m})} (X\otimes \tilde{L^n})\otimes (X\otimes \tilde{Q_m}) \ar[d] \ar[r] & (X\otimes \tilde{L^n})\otimes (X\otimes \tilde{L^m})\ar[d]\\ (P_{n-1}\otimes P_m)\coprod_{(P_{n-1}\otimes P_{m-1})} (P_n\otimes P_{m-1}) \ar[r] & P_n\otimes P_m}
\end{align*}

This is a pushout square by Lemma 4.1 in \cite{Muro}. The lower left corner has a map to $P_{n+m}$ by induction. The upper right corner is mapped there by shuffling the middle $X$ to the left-hand side, multiplying the two factors of $X$, passing to $\Sigma_{n+m}$-coinvariants, and using the definition of $P_{n+m}$. To show $P$ is a commutative monoid one must verify the following diagrams:

\xymatrix{S\otimes P \ar[r] \ar@{=}[dr] & P\otimes P \ar[d] & P\otimes P\otimes P \ar[d]^{1\otimes \mu} \ar[r]^{\mu \otimes 1} & P\otimes P\ar[d] & P\otimes P \ar[r]^\tau \ar[d]& P\otimes P \ar[dl]\\ & P & P\otimes P\ar[r] & P & P & }

The leftmost diagram commutes because the left-hand factor of $P$ is $P_0$, coming from a map $S\to X$, and so if we replace the other factors of $P$ by $P_m$ we see that this diagram commutes before passage to colimits. In particular, the diagram defining the map $P_0\otimes P_m\to P_m$ collapses in the following way. The upper left corner is $X\otimes X\otimes L^{\otimes m}/\Sigma_m \coprod_{X\otimes X\otimes Q_m/\Sigma_m} X\otimes X \otimes Q_m/\Sigma_m= X\otimes X\otimes L^{\otimes m}/\Sigma_m$ because $X\otimes Q_0 = X$. The upper right corner is also $X\otimes X\otimes L^{\otimes m}/\Sigma_m$ because $X\otimes L^{\otimes 0}=X$. Thus, the upper horizontal map is the identity. Similarly the bottom horizontal map is the identity on $P_0\otimes P_m$. Recalling that the $P_0$ comes from a map $S\to X$ where $S$ is the monoidal unit we may write 

$$P_0\otimes P_m = (P_0\otimes P_{m-1})\coprod_{S\otimes X\otimes Q_m/\Sigma_m}(S\otimes X\otimes L^{\otimes m}/\Sigma_m) = P_{m-1}\coprod_{X\otimes Q_m/\Sigma_m}(X\otimes L^{\otimes m}/\Sigma_m) = P_m$$

Where $P_0\otimes P_{m-1}=P_{m-1}$ by induction, and the other factors of $S$ disappear because $S$ is the unit for $X$. This proves the commutativity of the leftmost diagram.


The middle diagram also commutes on the level of individual $P_i$. In particular, the two ways of getting from $P_n\otimes P_m\otimes P_k$ to $P_{n+m+k}$ (i.e. via $P_{n+m}\otimes P_k$ and via $P_n\otimes P_{m+k}$) are the same. The key observation to show this is that all maps in the diagram are of the form (Pushout $\otimes$ Identity), and the pushout of a pushout is a pushout. Thus, both ways of going around are pushouts, and the universal property of pushouts shows that they must be isomorphic.

The rightmost diagram also commutes on the level of individual $P_i$, i.e. $P_n\otimes P_m\to P_{n+m}$ is the same as $P_n\otimes P_m\to P_m\otimes P_n\to P_{m+n}$. To see this, look at the diagram defining $\mu_P$ and consider what happens if the $n$ factors and $m$ factors are swapped. This causes no harm to the upper right corner because the map from $(X\otimes L^{\otimes m}/\Sigma_m)\otimes(X\otimes L^{\otimes n}/\Sigma_n)$ requires passage to $\Sigma_{m+n}$-coinvariants, so changing the order of the $L$ factors has no effect on $\mu_P$. Similarly there is no harm to the lower left corner because of induction. The upper left corner is hardest, but either way of going around to $P_{m+n}$ will render the swapping of factors meaningless. One way around requires passage to $\Sigma_{m+n}$-coinvariants and the other way goes to $P_i\otimes P_j$ factors for $i,j<n,m$ and so will hold by induction. This completes the proof of statement (1).


To verify that the map $X\to P$ is a map of commutative monoids one must only verify that it's a map of monoids and that the two monoids in question are commutative. This means verifying the commutativity of the following diagrams:
\begin{align*}
\xymatrix{X\otimes X \ar[r] \ar[d] & X \ar[d] & S\ar[r] \ar[dr] & X\ar[d]\\ P\otimes P \ar[r] & P & & P}
\end{align*}

The map $P\otimes P\to P$ is induced by passage to colimits of the multiplication $P_n\otimes P_m\to P_{n+m}$ and so by definition the obvious diagram with $P_n\otimes P_m$, $P_{n+m}$, $P\otimes P$, and $P$ commutes for all $n,m$. The point is that defining $P\otimes P\to P$ requires one to go to $P_n\otimes P_m$, so the commutativity is tautological. In particular, it commutes for $n=m=0$ and this proves the left-hand diagram above commutes, since $X=P_0$. The right-hand diagram commutes by definition of the map $S\to P$ as coming from $X$. This completes the proof of statement (2).


To prove that $P$ satisfies the universal property of pushouts in the category of commutative monoids requires one to define a map $P\to M$ which completes the following diagram, where $M$ is a commutative monoid, $X\to M$ is monoidal, and $L\to M$ is a map in $\M$. The reason one works with $K$ and $L$ rather than $\Sym(K)$ and $\Sym(L)$ is that the data of a map of commutative monoids $\Sym(K)\to M$ is the same as that of a map from $K$ to $M$, by the free-forgetful adjunction.
\begin{align*}
\xymatrix{K\ar[r]\ar[d] & L\ar[d]\ar[ddr] & \\ X\ar[r] \ar[drr] & P\ar@{..>}[dr] & \\ & & M}
\end{align*}

The existence of maps $K\to X\to M$ and $L\to M$ defines maps from $X\otimes W(D)\to M$ for all $D$ and all $n$. Commutativity of the outer diagram forces the maps $X\otimes W(D)\to M$ to be compatible, i.e. commutativity of the square diagram featuring $X\otimes W(D), X\otimes W(D\cup \{i\}, M$, and $M$. This is because the left-vertical map in that diagram is $K\to L$ and the right vertical map is $K\to X\to M$ (which is easy to see when thinking of commutativity of the outer diagram above as defining a word in $M$). Furthermore, these maps respect the $\Sigma_n$ action on the cube defining $Q_n$ because $M$ is commutative. Thus, by induction on $n$ we may define a map $P_n\to M$ because the diagram featuring $X\otimes Q_n/\Sigma_n, X\otimes L^{\otimes n}/\Sigma_n, P_{n-1},$ and $M$ commutes. In this diagram we use induction to define the map $P_{n-1}\to M$ and we using the fact that $M$ is commutative to define the map $X\otimes L^{\otimes n}/\Sigma_n\to M$. 

Commutativity of this diagram is due to the fact that $X\otimes W(D)\to M$ factors through $X\otimes L^{\otimes |D|}/\Sigma_{|D|}$ and hence through $P_{n-1}$ via $P_{|D|}$. The unique maps $P_n\to M$ assemble to a unique map $P\to M$.

Commutativity of the triangle featuring $X,P$, and $M$ follows by definition of $P$ as a colimit and of $X$ as $P_0$. Commutativity of the other triangle follows because it holds with $P_n$ substituted for $P$, for all $n$. This is because commutativity holds in the triangle which defines the map $P_n\to M$ for all $n$, so it holds in the (first) $L$ factor of $X\otimes L^{\otimes n}/\Sigma_n$, i.e. $L\to M$ is the same as $L\to P_n\to M$ for all $n$. This completes the proof of statement (3) and hence of the proposition.
\end{proof}


We move now to homotopy theoretic considerations, and use the proposition to prove Lemma \ref{lemma:appendix-main}. 

\begin{proof}
To prove statement (1), recall that the commutative monoid axiom tells us that if $h$ is a trivial cofibration then $h^{\boxprod n}/\Sigma_n$ is a trivial cofibration for all $n>0$.

So suppose $h=j:K\stackrel{\simeq}{\hookrightarrow} L$. Because $j$ is a trivial cofibration, the map $j^{\boxprod n}/\Sigma_n:Q_n/\Sigma_n \to \Sym^n(L)$ is a trivial cofibration. Thus, the map $X\otimes j^{\boxprod n}/\Sigma_n$ is of the form required by the monoid axiom. This means transfinite compositions of pushouts of such maps are weak equivalences, so in particular $X\to P$ is a weak equivalence in $\M$ and hence in $\CMon(\M)$. Any map in $\Sym(J)$-cell is a transfinite composite of pushouts of maps in $\Sym(J)$. We have seen that all such pushouts are of the form required by the monoid axiom, and a transfinite composite of a transfinite composite is still a transfinite composite, so the monoid axiom applied again proves that $\Sym(J)$-cell is contained in the weak equivalences. This completes the proof of (1).

For (2), suppose $h=i:K\hookrightarrow L$ and suppose $X$ is cofibrant in $\M$. By the strong commutative monoid axiom, the maps $i^{\boxprod n}/\Sigma_n$ are cofibrations for all $n$, so $X\otimes i^{\boxprod n}/\Sigma_n$ are cofibrations for all $n$. Since pushouts of cofibrations are again cofibrations, the maps $P_{n-1}\to P_n$ are cofibrations for all $i$. Because $P_0=X$ is cofibrant, this means all the $P_k$ are cofibrant and also $X\to P$ is a cofibration (so $P$ is cofibrant) because transfinite compositions of cofibrations are again cofibrations (see Proposition 10.3.4 in \cite{hirschhorn}). Every map in $\Sym(I)$-cell which has cofibrant domain is a transfinite composite of pushouts of maps of the form above, and so is in particular again a cofibration in $\M$.

\end{proof}

\section{Operadic Generalization} \label{sec:operads}

In the proof above, we make use of a particular filtration on the map $X\to P$. We could also have followed \cite{DAG3} and filtered the map $\Sym(f)$ as 
$$\Sym(K) = B_0 \to B_1\to \dots \to \Sym(L)$$
where each $B_n$ is a $\Sym(K)$-module. This makes it clear that the map $X\to P$ is a map of $X$-modules, and thus makes it easier to check that $P$ is in fact a monoid. However, this filtration requires special knowledge of $Com$, namely that it is generated by $Com(2)$-swaps (i.e. functions of arity two) so that $Com$-algebras can be multiplied with themselves. The author chose the approach presented here because it allows for an easy generalization to operads. 

The commutative monoid axiom has a natural generalization to an arbitrary operad $P$. Recall that cofibrancy may be defined for $P$-algebras via a lifting property, even if the category of $P$-algebras is not a model category.

\begin{defn} \label{defn:general-noncofibrant}
Let $P$ be an operad. A monoidal model category $\M$ is said to satisfy the \textit{$P$-algebra axiom} if for all cofibrant $P$-algebras $A$ and for all $n\geq 0$, $P_A(n) \otimes_{\Sigma_n} (-)^{\boxprod n}$ preserves trivial cofibrations (where $P_A$ is the enveloping operad).
\end{defn}

\begin{theorem} \label{thm:semi-on-P-alg}
Let $P$ be an operad (always assumed symmetric) and suppose $\M$ is a combinatorial model category satisfying the $P$-algebra axiom. Then the category $P$-alg$(\M)$ inherits a semi-model structure from $\M$.
\end{theorem}

\begin{proof}
As usual, this semi-model structure will be transferred along the free-forgetful adjunction $(P,U)$ via Lemma \ref{ss00-lemma2.3}. Because $\M$ is combinatorial, the smallness hypotheses of Lemma \ref{ss00-lemma2.3} are automatically satisfied. Let $j:K\to L$ be a trivial cofibration in $\M$ and consider the pushout of $P(j)$ along a $P$-algebra homomorphism $P(K)\to A$. Denote the resulting map $\gamma:A\to B$. Factor this map as in the Section 7.3 of \cite{harper-operads}, recalled in Remark \ref{remark-enveloping-computations}. So $\gamma$ is a transfinite composite of pushouts of maps of the form $P_A(n) \otimes_{\Sigma_n} j^{\boxprod n}$. By hypothesis all such maps are trivial cofibrations, so $\gamma$ is a trivial cofibration. As in Corollary \ref{cor:semi-on-comm-alg}, this completes the proof.
\end{proof}

While the $P$-algebra axiom gives a minimal condition on $\M$ so that $P$-algebras inherit a semi-model structure, it is not clear that this condition can be checked in practice because of the presence of $P_A$ in the hypotheses. However, we can generalize the commutative monoid axiom to find a new family of axioms on $\M$ which do not make reference to $P_A$. This line of reasoning has been used in \cite{spitzweck-thesis} (where conditions are given so that $\Sigma$-cofibrant operads are admissible) and in \cite{harper-operads} (where conditions are given so that all operads are admissible).

These two examples demonstrate that in order for the category of $P$-algebras to inherit a semi-model structure, a cofibrancy hypothesis on either $\M$ or $P$ will be needed. The following result will unify all previous results on admissibility into a single framework and provide new results for levelwise cofibrant operads, where the cofibrancy hypotheses are evenly distributed between the operad and the model category. 

\begin{theorem} \label{thm:operad-table}
Let $\M$ be a combinatorial monoidal model category. Let $f$ run through the class of (trivial) cofibrations. Consider the following hypothesis, where $X$ is an object with a $\Sigma_n$-action that runs through some class of objects $\mathcal{K}$:

\underline{Hypothesis}: $X\otimes_{\Sigma_n}f^{\boxprod n}$ is a (trivial) cofibration for all $X \in \mathcal{K}$.

In each row of the following table, placing this hypothesis on $\M$ for the class of objects $\mathcal{K}$ listed in the left column gives a semi-model structure on $P$-algebras for all $P$ satisfying the hypotheses in the right column.
\end{theorem}
\[
\begin{tabular}{c|c}
Hypothesis on $\M$ & Class of operad\\
\hline
$\mathcal{K}=\{\Sigma_n-$projectively cofibrant objects $\}$& $(\Sigma-)$Cofibrant\\
\hline
$\mathcal{K}=\{$objects cofibrant in $\M \}$ & Levelwise cofibrant\\
\hline
$\mathcal{K}=\{$objects in $\M^{\Sigma_n} \}$ & Arbitrary\\
\end{tabular}
\]

The hypotheses going down the left column are increasing in strength, while the hypotheses in the right column are decreasing. The last row says that if $\M$ is combinatorial, monoidal, satisfies the monoid axiom, and has the property that $\forall X \in \M^{\Sigma_n}$, $X\otimes_{\Sigma_n}f^{\boxprod n}$ is a (trivial) cofibration, then all operads are admissible. This generalizes the main theorem from \cite{harper-operads}, which states that if all symmetric sequences in $\M$ are projectively cofibrant then all operads are admissible. Similarly, the first row recovers a theorem of Spitzweck from \cite{spitzweck-thesis}, since it follows from the pushout product axiom that for any $\Sigma_n$-projectively cofibrant $X$, the map $X\otimes_{\Sigma_n}f^{\boxprod n}$ is a trivial cofibration. The row regarding levelwise cofibrant operads is new. 

\begin{proof}
The proof proceeds as in Remark \ref{remark-enveloping-computations}, but using $P_A(n)$ rather than $Com_A(n)$. The hypothesis in the theorem guarantees this procedure will work as soon as $P_A(n)$ is known to be in the class of objects considered in the left-hand column. For the bottom row this condition is automatic. For the top row one may use Proposition 5.17 in \cite{harper-hess}. For the middle row, we must show $P_A(n)$ is cofibrant in $\M$ if $P$ is levelwise cofibrant and $A$ is a cofibrant $P$-algebra. The proof in Proposition 5.17 (and Proposition 5.44a, on which it relies) in \cite{harper-hess} goes through mutatis mutandis.
\end{proof}

The author is still working to reduce the hypotheses on $\M$ so that combinatoriality is not required. This will come down to better understanding what the free $P$-algebra functor does to the domains of the generating trivial cofibrations. The interested reader may fill in an appropriately weakened smallness hypothesis on the domains. The author, together with Donald Yau, has worked out in \cite{white-yau} a generalization to this theorem in the setting of colored operads. This involves generalizing the filtration of Remark \ref{remark-enveloping-computations} to the colored setting.

These hypotheses on $\M$ are not too difficult to check. For example, the one for levelwise cofibrant operads holds for $sSet$, even though the hypothesis in the bottom row does not hold for $sSet$. The bottom row holds for $Ch(k)$ for $k$ a field of characteristic 0 and for the positive flat model structure on symmetric spectra (by arguments analogous to those found in Section \ref{sec:examples} above).

To get from a semi-model structure to a full model structure we would need to add a new hypothesis on $\M$. By way of analogy, note that to do this for cofibrant operads or for $Com$, the monoid axiom was needed. This is because the filtration by $P_A$ is simpler in these cases. In general, we need a hypothesis similar to the monoid axiom but which takes the $\Sigma_n$ action into account. 

\begin{defn}
Let $\sQ_{\Sigma_n}^t$ be the class of maps in $\M^{\Sigma_n}$ which are trivial cofibrations in $\M$. We say $\M$ satisfies the \textit{$\Sigma_n$-equivariant monoid axiom} if transfinite compositions of pushouts of maps of the form $(\sQ^t_{\Sigma_n}) \otimes_{\Sigma_n} X$ are contained in the weak equivalences for all $X \in \M^{\Sigma_n}$.
\end{defn}

It is clear from the filtration argument given in Section 7.3 in \cite{harper-operads} that this hypothesis will imply the semi-model structures are actually model structures. However, this hypothesis is in fact so strong that it alone proves all operads are admissible, regardless of the hypotheses in Theorem \ref{thm:operad-table}. We summarize:

\begin{corollary}
Suppose $\M$ is a combinatorial monoidal model category satisfying the $\Sigma_n$-equivariant monoid axiom. Then for any operad $P$, algebras over $P$ inherit a model structure from $\M$.
\end{corollary}

A simpler hypothesis to check, which also works to improve a semi-model structure to a model structure, is the hypothesis that all objects are cofibrant. Combined with our earlier observation about $sSet$ this implies all levelwise cofibrant operads (hence all operads) are admissible when $\M=sSet$.

\section*{Acknowledgments}

The author would like to gratefully acknowledge the support and guidance of his advisor Mark Hovey as this work was completed. The author is also indebted to John Harper, Luis Pereira, Marcy Robertson, Dror Farjoun, Carles Casacuberta, Brooke Shipley, Michael Batanin, and Richard Garner for many helpful conversations. The author also thanks Andrew Blumberg for catching an error in an early version of this paper, Birgit Richter for helpful remarks, and the anonymous referee for numerous helpful suggestions.


\begin{thebibliography}{10}

\bibitem{barwickSemi}
Clark Barwick.
\newblock On left and right model categories and left and right {B}ousfield
  localizations.
\newblock {\em Homology, Homotopy Appl.}, 12(2):245--320, 2010.

\bibitem{batanin-berger}
Michael Batanin and Clemens Berger.
\newblock Homotopy theory for algebras over polynomial monads, preprint
  available electronically from http://arxiv.org/abs/1305.0086.
\newblock 2014.

\bibitem{beke-sheafifiable} Tibor Beke. Sheafifiable homotopy model categories. {\em Math. Proc. Cambridge Philos. Soc.}, 129 (3):447-475, 2000.

\bibitem{BM03}
Clemens Berger and Ieke Moerdijk.
\newblock Axiomatic homotopy theory for operads.
\newblock {\em Comment. Math. Helv.}, 78(4):805--831, 2003.

\bibitem{BM06}
Clemens Berger and Ieke Moerdijk.
\newblock The {B}oardman-{V}ogt resolution of operads in monoidal model
  categories.
\newblock {\em Topology}, 45(5):807--849, 2006.

\bibitem{BM07}
Clemens Berger and Ieke Moerdijk.
\newblock Resolution of coloured operads and rectification of homotopy
  algebras.
\newblock In {\em Categories in algebra, geometry and mathematical physics},
  volume 431 of {\em Contemp. Math.}, pages 31--58. Amer. Math. Soc.,
  Providence, RI, 2007.

\bibitem{blumberg-hill}
Andrew~J. Blumberg and Michael~A. Hill.
\newblock Operadic multiplications in equivariant spectra, norms, and
  transfers, preprint, http://arxiv.org/abs/1309.1750.
\newblock 2014.

\bibitem{eternal-preprint}
Rodr\'{i}guez Jos\'{e}~L. Casacuberta, Carles and Jin-Yen Tai.
\newblock Localizations of abelian Eilenberg-Mac Lane spaces of finite type,
  preprint available electronically from
  http://atlas.mat.ub.es/personals/casac/publicacions.html.
\newblock 2009.

\bibitem{obstruction}
J.~Daniel Christensen, William~G. Dwyer, and Daniel~C. Isaksen.
\newblock Obstruction theory in model categories.
\newblock {\em Adv. Math.}, 181(2):396--416, 2004.

\bibitem{DRO}
Bj{\o}rn~Ian Dundas, Oliver R{\"o}ndigs, and Paul~Arne {\O}stv{\ae}r.
\newblock Enriched functors and stable homotopy theory.
\newblock {\em Doc. Math.}, 8:409--488 (electronic), 2003.

\bibitem{dwyer-spalinski}
W.~G. Dwyer and J.~Spali{\'n}ski.
\newblock Homotopy theories and model categories.
\newblock In {\em Handbook of algebraic topology}, pages 73--126.
  North-Holland, Amsterdam, 1995.

\bibitem{EKMM}
A.~D. Elmendorf, I.~Kriz, M.~A. Mandell, and J.~P. May.
\newblock {\em Rings, modules, and algebras in stable homotopy theory},
  volume~47 of {\em Mathematical Surveys and Monographs}.
\newblock American Mathematical Society, Providence, RI, 1997.
\newblock With an appendix by M. Cole.

\bibitem{farjoun}
Emmanuel~Dror Farjoun.
\newblock {\em Cellular spaces, null spaces and homotopy localization}, volume
  1622 of {\em Lecture Notes in Mathematics}.
\newblock Springer-Verlag, Berlin, 1996.

\bibitem{fresse-book}
Benoit Fresse.
\newblock {\em Modules over operads and functors}, volume 1967 of {\em Lecture
  Notes in Mathematics}.
\newblock Springer-Verlag, Berlin, 2009.

\bibitem{goerss-hopkins-moduli-problems}
P.~G. Goerss and M.~J. Hopkins.
\newblock Moduli problems for structured ring spectra, preprint available at
  www.math.northwestern.edu/pgoerss.
\newblock 2004.

\bibitem{gorchinskiy-symmetrizable}
Sergey Gorchinskiy and Vladimir Guletskii.
\newblock Symmetric powers in stable homotopy categories, version 3, preprint
  available electronically from http://arxiv.org/abs/0907.0730.
\newblock 2013.

\bibitem{gutierrez-vogt}
Javier~J. Guti{\'e}rrez and Rainer~M. Vogt.
\newblock A model structure for coloured operads in symmetric spectra.
\newblock {\em Math. Z.}, 270(1-2):223--239, 2012.

\bibitem{harper-spectra}
John~E. Harper.
\newblock Homotopy theory of modules over operads in symmetric spectra.
\newblock {\em Algebr. Geom. Topol.}, 9(3):1637--1680, 2009.

\bibitem{harper-operads}
John~E. Harper.
\newblock Homotopy theory of modules over operads and non-{$\Sigma$} operads in  monoidal model categories.
\newblock {\em J. Pure Appl. Algebra}, 214(8):1407--1434, 2010.

\bibitem{harper-hess}
John~E. Harper and Kathryn Hess.
\newblock Homotopy completion and topological {Q}uillen homology of structured
  ring spectra.
\newblock {\em Geom. Topol.}, 17(3):1325--1416, 2013.

\bibitem{kervaire-arxiv}
Michael~A. Hill, Michael~J. Hopkins, and Douglas~C. Ravenel.
\newblock On the non-existence of elements of {K}ervaire invariant one.
\newblock arXiv:0908.3724, 2015.

\bibitem{hirschhorn}
Philip~S. Hirschhorn.
\newblock {\em Model categories and their localizations}, volume~99 of {\em
  Mathematical Surveys and Monographs}.
\newblock American Mathematical Society, Providence, RI, 2003.

\bibitem{hirschhorn-under-categories}
Philip~S. Hirschhorn.
\newblock Overcategories and undercategories of model categories, preprint
  available electronically from http://www-math.mit.edu/~psh/undercat.pdf.
\newblock 2005.

\bibitem{hovey-monoidal}
Mark Hovey.
\newblock Monoidal model categories, preprint available electronically from
  http://arxiv.org/abs/math/9803002.
\newblock 1998.

\bibitem{hovey-book}
Mark Hovey.
\newblock {\em Model categories}, volume~63 of {\em Mathematical Surveys and
  Monographs}.
\newblock American Mathematical Society, Providence, RI, 1999.

\bibitem{hovey-smith-ideals}
Mark Hovey.
\newblock Smith ideals of structured ring spectra, preprint available
  electronically from http://arxiv.org/abs/1401.2850.
\newblock 2014.

\bibitem{hovey-shipley-smith}
Mark Hovey, Brooke Shipley, and Jeff Smith.
\newblock Symmetric spectra.
\newblock {\em J. Amer. Math. Soc.}, 13(1):149--208, 2000.

\bibitem{lawson-gamma}
Tyler Lawson.
\newblock Commutative {$\Gamma$}-rings do not model all commutative ring
  spectra.
\newblock {\em Homology, Homotopy Appl.}, 11(2):189--194, 2009.

\bibitem{lewis}
L.~Gaunce Lewis, Jr.
\newblock Is there a convenient category of spectra?
\newblock {\em J. Pure Appl. Algebra}, 73(3):233--246, 1991.

\bibitem{DAG3}
Jacob Lurie.
\newblock Derived algebraic geometry III: Commutative algebra, preprint
  available at http://arxiv.org/abs/math/0703204.
\newblock 2009.

\bibitem{htt}
Jacob Lurie. Higher Topos Theory. Annals of Mathematics Studies 170, 2009.

\bibitem{MO-error-Lurie}
Jacob Lurie.
\newblock Comment on mathoverflow thread 146438, 2013.

\bibitem{lydakis}
Manos Lydakis.
\newblock Simplicial functors and stable homotopy theory, preprint available
  electronically from http://hopf.math.purdue.edu/lydakis.
\newblock 1998.

\bibitem{MMSS}
M.~A. Mandell, J.~P. May, S.~Schwede, and B.~Shipley.
\newblock Model categories of diagram spectra.
\newblock {\em Proc. London Math. Soc. (3)}, 82(2):441--512, 2001.

\bibitem{Muro}
Fernando Muro.
\newblock Homotopy theory of nonsymmetric operads.
\newblock {\em Algebr. Geom. Topol.}, 11(3):1541--1599, 2011.

\bibitem{dmitri}
Dmitri Pavlov and Jakob Scholbach.
\newblock Rectification of commutative ring spectra in model categories,
  preprint.
\newblock 2014.

\bibitem{luis}
Lu\'{i}s~Alexandre Pereira.
\newblock Goodwillie calculus in the category of algebras over a spectral
  operad, preprint available electronically from
  http://math.mit.edu/$\sim$luisalex/goodwilliecalculus\%20inalgo.pdf.
\newblock 2013.

\bibitem{pereira-cofibrancy} 
Lu\'{i}s~Alexandre Pereira.
Cofibrancy of operadic constructions in positive symmetric spectra, arXiv:1410.4816.

\bibitem{quillen-rational-annals}
Daniel Quillen.
\newblock Rational homotopy theory.
\newblock {\em Ann. of Math. (2)}, 90:205--295, 1969.

\bibitem{rezk-theta-n}
Charles Rezk.
\newblock A Cartesian presentation of weak n-categories.
\newblock {\em Geometry and Topology}, 14(1):521--571, 2010.

\bibitem{sagave-spectra-of-units-positive-gamma}
Steffen Sagave.
\newblock Spectra of units for periodic ring spectra and group completion of
  graded $E_\infty$ spaces, preprint available electronically from
  http://arxiv.org/abs/1111.6731.
\newblock 2013.

\bibitem{sagave-schlichtkrull}
Steffen Sagave and Christian Schlichtkrull.
\newblock Diagram spaces and symmetric spectra.
\newblock {\em Adv. Math.}, 231(3-4):2116--2193, 2012.

\bibitem{schwede-book-symmetric-spectra}
Stefan Schwede.
\newblock An untitled book project about symmetric spectra, preprint available
  electronically from http://www.math.uni-bonn.de/$\sim$schwede/symspec.pdf.

\bibitem{schwede-gamma}
Stefan Schwede.
\newblock Stable homotopical algebra and {$\Gamma$}-spaces.
\newblock {\em Math. Proc. Cambridge Philos. Soc.}, 126(2):329--356, 1999.

\bibitem{SS00}
Stefan Schwede and Brooke~E. Shipley.
\newblock Algebras and modules in monoidal model categories.
\newblock {\em Proc. London Math. Soc. (3)}, 80(2):491--511, 2000.

\bibitem{schwede-shipley-equivalences}
Stefan Schwede and Brooke~E. Shipley. Equivalences of monoidal model categories. Algebr. Geom. Topol. (3), 287-334, 2003.

\bibitem{shipley-uniqueness}
Brooke Shipley.
\newblock Monoidal uniqueness of stable homotopy theory.
\newblock {\em Adv. Math.}, 160(2):217--240, 2001.

\bibitem{shipley-positive}
Brooke Shipley.
\newblock A convenient model category for commutative ring spectra.
\newblock In {\em Homotopy theory: relations with algebraic geometry, group
  cohomology, and algebraic {$K$}-theory}, volume 346 of {\em Contemp. Math.},
  pages 473--483. Amer. Math. Soc., Providence, RI, 2004.

\bibitem{spitzweck-thesis}
Markus Spitzweck.
\newblock Operads, algebras and modules in general model categories, preprint
  available electronically from http://arxiv.org/abs/math/0101102.
\newblock 2001.

\bibitem{stolz-thesis}
Martin Stolz.
\newblock Equivariant structure on smash powers of commutative ring spectra,
  ph.d. thesis available electronically from
  http://folk.uib.no/hus001/data/thesismartinstolz.pdf.
\newblock 2011.

\bibitem{thomason-first-quadrant}
Robert~Wayne Thomason.
\newblock First quadrant spectral sequences in algebraic k-theory.
\newblock {\em Algebraic Topology: Aarhus 1978, Springer Lecture Notes in
  Mathematics}, 763:332--355, 1979.

\bibitem{toen-vezzosi}
Bertrand To{\"e}n and Gabriele Vezzosi.
\newblock Homotopical algebraic geometry. {II}. {G}eometric stacks and
  applications.
\newblock {\em Mem. Amer. Math. Soc.}, 193(902):x+224, 2008.

\bibitem{white-topological}
David White.
\newblock A short note on smallness and topological monoids, preprint available
  electronically from http://personal.denison.edu/$\sim$whiteda/research.html.
\newblock 2013.

\bibitem{white-thesis}
David White.
\newblock Monoidal bousfield localizations and algebras over operads.
\newblock 2014.
\newblock Thesis (Ph.D.)--Wesleyan University.

\bibitem{white-localization}
David White.
\newblock Monoidal bousfield localizations and algebras over operads, preprint
  available electronically from http://arxiv.org/abs/1404.5197.
\newblock 2014.

\bibitem{white-yau}
David White and Donald Yau.
\newblock Bousfield localization and algebras over colored operads, preprint
  available electronically from http://arxiv.org/abs/1503.06720.
\newblock 2015.

\bibitem{yalin-alg-over-prop}
Sinan Yalin.
\newblock Classifying spaces and moduli spaces of algebras over a prop, version
  1, preprint available electronically from http://arxiv.org/abs/1207.2964v1.
\newblock 2012.

\end{thebibliography}
\end{document}